\newtheorem{theorem}{Theorem}[section]
\newtheorem{lemma}[theorem]{Lemma}
\newtheorem{proposition}[theorem]{Proposition}
\newtheorem{corollary}[theorem]{Corollary}
\theoremstyle{definition}
\theoremstyle{remark}
\numberwithin{equation}{section}
\newcommand{\dash}{\text{-}}
\newcommand{\w}{\mathrm{w}}
\newcommand{\wst}{\mathrm{w}^*}
\newcommand{\dens}{\mathrm{dens}}
\renewcommand{\span}{\mathrm{span}}
\newcommand{\dist}{\mathrm{dist}}
\newcommand{\conv}{\mathrm{conv}}
\newcommand{\R}{\mathbb{R}}
\newcommand{\X}{\mathrm{X}}
\newcommand{\Y}{\mathrm{Y}}
\newcommand{\Z}{\mathrm{Z}}
\newcommand{\B}{\mathbf{B}}
\newcommand{\I}{\mathbf{I}}
\renewcommand{\S}{\mathbf{S}}
\renewcommand{\ker}{\mathrm{Ker}}
\renewcommand{\mod}{/}
\newcommand{\cf}{\mathrm{cf}}
\newcommand{\codens}{\mathrm{codens}}
\newcommand{\SD}{\mathrm{SD}}
\begin{document}

\title[Unconditional and bimonotone structures]{Unconditional and bimonotone structures in 
high density Banach spaces}

\begin{abstract}
It is shown that every normalized weakly null sequence of length $\kappa_{\lambda}$ in a Banach space has a subsequence of length $\lambda$ which is an unconditional basic sequence;
here $\kappa_{\lambda}$ is a large cardinal depending on a given infinite cardinal $\lambda$. 
Transfinite topological games on Banach spaces are analyzed which determine the existence of a long unconditional basic sequence. Then 'asymptotic disentanglement' condition in a transfinite setting is studied which ensures a winning strategy for the unconditional basic sequence builder in the above game. The following problem is investigated: When does a Markushevich basic sequence with length uncountable regular cardinal $\kappa$ admit a subsequence of the same length which is a bimonotone basic sequence? Stabilizations of projectional resolutions of the identity (PRI) are performed under a density contravariance principle in order  to gain some additional strong regularity properties, such as bimonotonicity.
\end{abstract}

\author{Jarno Talponen}
\address{University of Eastern Finland, Mathematics, Box 111, FI-80101 Joensuu, Finland} 
\email{talponen@iki.fi}
\date{\today}
\subjclass[2010]{Primary: 03E75, 46B26; Secondary: 03E02, 03E55, 05C55, 46A50, 46B15, 54A25}
\keywords{Banach space, Asplund space, coseparable, basic sequence, transfinite sequence, unconditional basis, dispersed, weakly null, topological game, biorthogonal system, WCG, WLD, SCP, property C, Corson property, Erd\"{o}s cardinal, asymptotically free space, club filter, property (sigma), transfinite block basic sequence, partition relation, modular basis}
\maketitle

\section{Introduction}


Schauder bases are very useful both in the theoretical study of Banach spaces, as well as in the applications of Functional Analysis. Many Banach spaces exhibit a rich system of projections (see e.g. \cite{Kubis}) while some spaces admit few operators (e.g. \cite{Ko}). These considerations can be applied in classifying spaces, such as hereditarily indecomposable spaces, through the spectacle of Gowers' program, see e.g. \cite{AT2, Gowers, Ko2, pelczar, FR}. In practice, it is often easiest to work within separable complemented fragments of a given Banach space. Even if a space fails to have a Schauder basis it is often possible and convenient to work with some weaker form of coordinate system, such as a Markushevich basis or a Projectional Resolution of the Identity. Also, a space failing to admit a nice Schauder basis may still contain a sizeable subspace having one. 

Similarly, if one is given a suitably dispersed (possibly transfinite) sequence of vectors it is a very natural idea to attempt to extract a subsequence which is a Schauder basic sequence of some sort. Based on 'common sense' combinatorial considerations, the longer the transfinite sequence to begin with,
the easier it is to find a countable unconditional subsequence. This elegant problem has been studied extensively, often involving spaces of high density and infinitary combinatorial assumptions which are extraneous to the standard axioms of the set theory, see e.g. \cite{DLT, ketonen, LoTo2}. More generally, ideas originating from various branches of mathematical logic have been very fruitful in the study of the geometry of Banach spaces, see e.g.
\cite{AD, AG, AT, AT2, To}.

There is an example of a reflexive Banach space $\X$ of density $\aleph_1$ with a Schauder basis but 
such that the space does not contain any (infinite) unconditional basic sequence, see 
\cite{ALT}, cf. \cite{AG,AT,AT2}. On the other hand, it is known that in a fairly general setting a space having density uncountable regular $\kappa$ has a monotone basic sequence (see e.g. \cite{Tal2}). The condition of bimonotonicity of a basic sequence is an intermediate notion between monotonicity and unconditionality, apparently closer to monotonicity. Therefore, it seems reasonable to ask if well-behaved non-separable Banach spaces admit long bimonotone structures and these are studied here at the end. 

The main motivating question in this paper is the open problem regarding the exact value of the cardinal invariants $\mathfrak{nc}$ and $\mathfrak{nc}_{\mathrm{rfl}}$, investigated e.g. in \cite{DLT, ketonen}. These are the least cardinals $\kappa$ such that all Banach spaces (resp. reflexive spaces) with density 
$\geq\! \kappa$ admit an unconditional basic sequence. In fact, it is not currently known if these are non-large cardinals absolutely. In this connection, we obtain a natural upper bound $\mathfrak{c}^+$ for a reasonable class of spaces.

We aim to extend the work in \cite{DLT} and \cite{Tal2}; the former deals e.g. with weakly null sequences of vectors, combinatorial principles and countable unconditional basic sequences, while the latter involves somewhat dispersed long sequences of vectors, topological tightness conditions and long monotone basic sequences. In this paper we use both `pure' combinatorial arguments and `geometro-combinatorial' reasoning in extracting or discovering \emph{long} basic sequences. We investigate different types of bases here, most importantly of the suppression unconditional and the bimonotone sort. 

To highlight some considerations here, firstly we apply a separable reduction with 
infinitary Ramsey theory to extract long unconditional subsequences of 
normalized weakly null sequences having the lengths of suitable large cardinals. The main point here is refining the analysis in order to weaken the combinatorial hypothesis. Apart from combinatorial analysis, we will also analyze Banach spaces whose subspaces of relatively small density are transfinitely asymptotically free; in a sense 'disentangled' from the bulk of the space. We then observe that such a principle yields a winning strategy for a builder of long unconditional bases in a transfinite $2$-player topological game played on the given space. Thus we relax the burden upon combinatorics by imposing on a Banach space itself some structural assumptions having some combinatorial flavor. We continue applying and developing the $(\sigma)$ property introduced in \cite{Tal2} in a Baire category spirit, stating that coseparable closed subspaces are preserved in countable intersections. This property holds in Weakly Lindel\"{o}f Determined spaces. To borrow some terminology from group theory, some conditions of Banach spaces hold up \emph{virtually}, or up to quotients by
negligible subspaces. The $(\sigma)$ property can be used in virtually 'amalgamating' countably many conditions that hold virtually. To obtain one of the main results in this paper, we will, in a sense, dualize this property, which then enables us to build reverse-monotone basic sequences. 

\subsection{Preliminaries}

Real infinite-dimensional Banach spaces are typically denoted here by $\X,\Y,\Z$. 
We denote by $\B_{\X}$ the closed unit ball of $\X$ and by $\S_{\X}$ the unit sphere of $\X$. 
Here $\kappa$ always stands for an \emph{uncountable regular} cardinal. Usually
$\lambda$ denotes a cardinal or a limit ordinal and usually we will apply cardinal arithmetic (instead of
ordinal arithmetic) operations. The use of $\aleph_\alpha$ notation occurs on ad hoc typographical grounds. 
We use the symbols $\bigvee$ and $\bigwedge$ for the least upper bound and the greatest lower bound 
(also $\vee$, $\wedge$ for disjunction, conjunction; or $\max$, $\min$), respectively. A subsequence $\{\alpha_{\sigma}\}_{\sigma<\beta} \subset \lambda$
is a mapping $\beta \to \lambda,\ \sigma\mapsto \alpha_\sigma$, and, unless otherwise stated, subsequences are assumed strictly increasing here.

See \cite{EMHR, HHZ, Jech, Ne, SingerI, SingerII, Z} for suitable background information on the standard concepts in infinitary combinatorics, Banach spaces, set theory, topology of Banach spaces and Schauder bases, respectively, cf. \cite{BP, Gr, Kalenda2000, Sh, Sh2, To}.  We assume a familiarity with biorthogonal systems of non-separable Banach spaces and closely related matters; see the monograph \cite{Hajek_biortsyst} for an exposition.
 
Denote $\dist(A,B)=\inf_{a\in A, b\in B}\|a-b\|$, where $A,B\subset \X$. The weak and weak-star topology
of a Banach space are denoted by $\w$ and $\wst$, respectively.
Recall that $\mathcal{F}\subset \X^{\ast}$ is a separating subset, if and only if for each $x\in\X$ there is $f\in\mathcal{F}$ such that $f(x)\neq 0$, if and only if $\overline{[\mathcal{F}]}^{\wst}=\X^{\ast}$.
We denote by $\ell_{<\lambda}^\infty (\kappa)$ the subspace of $\ell^\infty (\kappa)$ of sequences
whose support has cardinality $<\!\lambda$, always implicitly assuming that $\cf(\lambda )> \omega$ (to ensure the completeness of the subspace). Denote $\ell_{\lambda}^\infty (\kappa) = \ell_{<\lambda^+}^\infty (\kappa)$.
We will apply the facts presented below frequently and sometimes implicitly.
\subsubsection{Schauder basic sequences}
Given a limit ordinal $\lambda$, a sequence $\{x_{\alpha}\}_{\alpha<\lambda}\subset\X \setminus \{0\}$ is said to be a (transfinite) basic sequence, if there
is a constant $1\leq C<\infty$ such that $\|y\| \leq C \|y+z\|$
for all $y\in [x_{\alpha}:\ \alpha<\eta]$, $z\in [x_{\alpha}:\ \eta\leq \alpha<\lambda]$ and all $\eta<\lambda$ (see \cite[p.589]{SingerII}). In such
case there are natural linear basis projections
$P_{\eta}\colon [x_{\alpha}:\ \alpha<\lambda]\to [x_{\alpha}:\ \alpha<\eta]$ such that 
$\|P_{\eta}\|\leq C$ for $\eta<\lambda$. If $[e_\alpha \colon \alpha<\lambda] =\X$ then the sequence is the Schauder basis of $\X$. The limit ordinal $\lambda$ is called \emph{length} or \emph{order type} of the basic sequence. The basic sequence is said to be \emph{monotone}, if the basis projections are contractive, i.e. $C=1$. The basic sequence is said to be \emph{reverse monotone} if the corresponding coprojections are norm-$1$ and \emph{bimonotone} if it is both monotone and reverse monotone. Recall that a sequence $\{e_\alpha \}_{\alpha<\lambda} \subset \S_{\X}$ is  $C$-\emph{suppression unconditional basic sequence}, $C\geq 1$, if 
\[\left\|\sum_{\alpha \in A} c_\alpha e_\alpha \right\| \leq  C\left\|\sum_{\alpha \in B} c_\alpha e_\alpha \right\|\]
for any finite subsets $A \subset B \subset \lambda$ and any choices of coefficients $c_\alpha \in \R$, $\alpha\in B$. A stronger form of unconditionality, $C$-unconditional basic sequnce satisfies
\[\left\|\sum_{\alpha \in A} c_\alpha e_\alpha \right\| \leq  C\left\|\sum_{\alpha \in A} \theta(\alpha) c_\alpha e_\alpha \right\|\]
for any sequence of signs $\theta \in \{\pm 1\}^\lambda$. For a yet stronger form, we call here a sequence \emph{modular unconditional} basic sequence if 
\[\left\| \sum_{\alpha \in A} c_\alpha e_\alpha \right\| \leq \left\| \sum_{\alpha \in A} d_\alpha e_\alpha \right\|
\quad \implies\quad  \left\| \sum_{\alpha \in A} c_\alpha e_\alpha + e_\gamma \right\| \leq \left\| \sum_{\alpha \in A} d_\alpha e_\alpha + e_\gamma \right\|\]
for finite $A \subset \lambda$, $A\not\ni \gamma <\lambda$. By induction, using $\pm c_\alpha = \mp  d_\alpha$, one can check that a modular unconditional sequence is $1$-unconditional as well.

\subsubsection{Corson's property and tightness} Recall that a topological space $(T,\tau)$ 
is \emph{countably tight}, if and only if for any increasing sequence $E_{\alpha}\subset T$, $\alpha<\mu$,  $\cf(\mu)>\omega$, of closed sets, the union $\bigcup_{\alpha<\mu }E_{\alpha}\subset T$ is closed.
The Banach spaces are countably tight in their norm and weak topologies. Roughly speaking, the well-behaved dual spaces are countably tight in their weak-star topology. We frequently apply Corson's 
property (C) which is a convex generalization of weak-star countable tightness (see \cite{Corson, Pol}):
A Banach space $\X$ has property (C) if for any $\Gamma \subset \X^*$ and $f \in \overline{\Gamma}^{\wst}$ there is already a subset $\Gamma_0 \subset \Gamma$, $|\Gamma_0 | \leq \aleph_0$, 
with $f\in \overline{\conv}^{\wst} (\Gamma_0 )$.

\subsubsection{Markushevich-bases and dispersed sequences}
Suppose that $\{(x_{\alpha},x^{\ast}_{\alpha})\}_{\alpha<\lambda}\subset \X\times \X^{\ast}$ 
is a biorthogonal system, i.e. $x_{\alpha}^{\ast}(x_{\beta})=\delta_{\alpha,\beta}$. We call $\{x_{\alpha}\}_{\alpha<\lambda}$ a \emph{biorthogonal
sequence} for brevity. If $[x_{\alpha}: \alpha<\lambda]=\X$ and $\overline{[x^{\ast}_{\alpha}:\ \alpha<\lambda]}^{\wst}=\X^{\ast}$, then
$\{(x_{\alpha},x^{\ast}_{\alpha})\}_{\alpha<\lambda}$ is called a \emph{Markushevich basis} or \emph{M-basis}.
If $\{(x_{\alpha},x^{\ast}_{\alpha})\}_{\alpha<\lambda}$ is an M-basis on $[x_{\alpha}: \alpha<\lambda]$, then $\{x_{\alpha}\}_{\alpha<\lambda} \subset \X$ is called
an M-basic sequence.
We often do not explicitly include the functionals in M-basis. Coined and investigated in \cite{Tal2}, a sequence $\{x_\alpha\}_{\alpha<\lambda}\subset \X$ is called 
\emph{strongly dispersed} ($\SD$) if 
\[\bigcap_{\nu<\lambda} [x_\alpha \colon \nu<\alpha<\lambda] =\{0\}.\]
This can be seen as an opposite condition to overfilling sequences. For example, if 
$\{x_\alpha\}_{\alpha<\kappa}\subset \X$ is an M-basic sequence or weakly convergent to $0$, 
then it is $\SD$. There was an unfortunate choice of 
terminology in \cite{Tal2} where the author used `weakly null' and `weakly convergent to $0$' as synonyms, 
meaning the latter. Recall that $x_\alpha \stackrel{\w}{\longrightarrow} 0$, as $\alpha \to \lambda$ if 
$f(x_\alpha ) \to 0$ as $\alpha \to \lambda$ for each $f\in \X^*$, whereas 
$\{x_\alpha\}_{\alpha<\lambda}\subset \X$ is weakly null if $\{f (x_\alpha)\}_{\alpha<\lambda} \in c_0 (\lambda )$ for each $f \in \X^*$ 
(thus for each $(c_\alpha )_{\alpha<\lambda} \in  c_0 (\lambda )$ and 
$\varepsilon>0$ the set $\{\alpha <\lambda \colon |c_\alpha| > \varepsilon \}$ is finite). To summarize,
\[\text{weakly\ null}\ \implies\ \text{weakly\ convergent\ to}\ 0\ \implies \SD\ \Longleftarrow\  \text{M-basic}.\] 
If $\X$ has property (C), then, for sequences of $\X$ having length $\lambda$ 
with $\cf(\lambda)>\omega$, the weak convergence to $0$ is equivalent to the $\SD$ condition, see 
\cite{Tal2}. 

\subsubsection{Well-behaved Banach spaces} 
A Banach space $\X$ is Weakly Lindel\"{o}f Determined (WLD) if and only if it admits an M-basis and has 
Corson's property (C) (see \cite{Z}).
We will frequently employ the following useful equivalent condition 
(see e.g. \cite[Thm.4.17]{Kalenda_Extracta2000}): There is an M-basis $\{(x_{\alpha},g_{\alpha})\}_{\alpha}$ of $\X$ such that
\begin{equation}\label{eq: count}
|\{\alpha :\ f(x_{\alpha})\neq 0\}|\leq \aleph_{0}\quad \mathrm{for\ any}\ f\in \X^{\ast}, 
\end{equation}
and, in fact, then \emph{any} M-basis on $\X$ has this property according to property (C). 
A subspace $\Y\subset\X^{\ast}$ is called $r$-norming, $0< r\leq 1$, if $\inf_{x\in\S_{\X}}\sup_{x^{\ast}\in \S_{\Y}}x^{\ast}(x)\geq r$, or, equivalently, if
$r\B_{\X^{\ast}}\subset \overline{\B_{\Y}}^{\wst}\subset \X^*$. We denote by
$\dens(\X)$ the density of $\X$ and $\wst\dash\dens(\X^* )$ the density of a dual space $\X^*$ in the weak-star topology. The next notions and facts are from \cite{Tal2}. For a subspace $\Y \subset \X$ we write $\codens(\Y)=\dens(\X\mod\Y)$, the superspace $\X$ being understood. If $\codens(\Y)=\aleph_0$
then $\Y$ is \emph{coseparable} in $\X$. In a WLD space $\X$ we have 
$\codens(\bigcap_{\alpha<\lambda} \Z_\alpha ) \leq |\lambda| \vee \bigvee_{\alpha<\lambda}
\codens(\Z_\alpha )$ 
for $\Z_\alpha \subset \X$, $\alpha<\lambda$. In particular, a WLD space $\X$ has 
property $(\sigma)$, involving the lattice of its subspaces: \emph{The coseparable subspaces are preserved in countable intersections.}

\subsubsection{Projectional structure}
Recall that a Banach space $\X$ has the \emph{Separable Complementation Property} (SCP) (resp. $1$-SCP), if each separable subspace is contained in a complemented (resp. $1$-complemented) subspace of $\X$.

An M-basis $\{(x_\beta , f_\beta )\}_{\beta<\kappa}$ of a WLD space $\X$ gives rise to a prototypical \emph{projectional resolution of the identity} (PRI), a commuting family of linear norm-$1$ projections $\{P_{\alpha}\}_{\alpha<\kappa}$ such that there is a continuous strictly increasing map $\kappa \to \kappa$, $\alpha \mapsto \theta_\alpha$ such that 
$|\alpha| \vee \omega = |\theta_\alpha|$ and the image of $P_\alpha$ is 
$[x_\beta \colon \beta < \theta_\alpha]$ for each $\alpha<\kappa$ (see \cite{Hajek_biortsyst} for a formal definition). In such a case the given M-basis is said to be \emph{subordinated} to the PRI.
We define a \emph{coarsening} of a PRI $\{P_\alpha \}_{\alpha<\kappa}$ on $\X$ to be a subsequence 
$\{P_{\alpha_\beta}\}_{\beta<\kappa}$ that is also a PRI on $\X$.
Let us summarize:
\[\text{WLD}\ \implies\ \exists\ \text{M-basis},\ \text{(C)},\ \exists\ \text{PRI},\ 1\dash\text{SCP}, (\sigma) .\] 

\subsubsection{Partition relations in Ramsey theory}
We will use the following partition relation due to Erd\"{o}s and Rado, see e.g. \cite{EMHR,Jech}.
Let $\kappa$, $\lambda$ and $\mu$ be cardinals. Given a set $X$ we denote 
$[X]^n = \{A\subset X\colon |A|=n \}$, $n<\omega$, and by $[X]^{<\omega}$ the set of all 
finite subsets of $X$. 

The relation $\kappa \to (\lambda, \mu)^n$ for a fixed  $0<n<\omega$ means that for any function $c\colon [\kappa]^{<\omega} \to \{0,1\}$ (or coloring) there is 
$K_0 \subset \kappa$, $|K_0|=\lambda$, or there is $K_1 \subset \kappa$, $|K_1 | = \mu$, such that 
$c([K_0 ]^n ) =\{0\}$ or $c([K_1 ]^n) =\{1\}$. For example,
$(2^{\aleph_0})^+ \to ((2^{\aleph_0})^+ , \aleph_1 )^2$, see \cite[p.77]{EMHR}.

Recall the definition of $\lambda$-Erd\"{o}s cardinal; this is the least 
cardinal $\kappa$ such that
\[\kappa \to (\lambda)_{2}^{<\omega},\]
that is, for any coloring $c\colon [\kappa]^{<\omega} \to \{0,1\}$ there is 
$K\subset \kappa$, $|K|=\lambda$, such that for each $n<\omega$ the function $c$ is contant 
on $[K ]^n$ (but the values may vary with $n$). 

It seems reasonable to investigate the following 'hybrid' case. In this paper 
\begin{equation}\label{eq: partition}
\kappa \to (\lambda , \mu)^{<\omega}_2
\end{equation}
is a shorthand notation for the statement that for any coloring $c\colon [\kappa]^{<\omega} \to \{0,1\}$ there are $K_0 , K_1 \subset \kappa$ such that $|K_0 | = \lambda$, $|K_1 | = \mu$ and for each $0<n<\omega$
\[c([K_0 ]^n ) =\{0\}\ \vee \ c([K_1 ]^n) =\{1\}.\]
 
Note that the special case $\kappa \to (\mu , \mu)^{<\omega}_2$ is a consequence of $\kappa \to (\mu )^{<\omega}_2$ by using $K_0 = K_1=K$.

\section{Some remarks on long Schauder basic sequences}

The following result is motivated by Problem 8. in \cite{Lopez-Abad: problems}.

\begin{theorem}
Let $\X$ be a Banach space and $\{x_\alpha \}_{\alpha<\kappa} \subset \S_\X$ a sequence
satisfying
\[[x_\alpha \colon \alpha\leq \mu] \cap [x_\alpha\colon \mu < \alpha <\kappa]= \{0\}\] 
for all $\mu < \kappa$. Suppose that there is a stationary subsequence 
$S=\{\alpha_\gamma \}_{\gamma<\kappa} \subset \kappa$ such that for each $\mu< \kappa$ there is a $\mathrm{club}$ subset $C_\mu \subset \kappa$ with $[x_{\alpha_\gamma} \colon \gamma \leq \mu] + [x_\alpha \colon \alpha \in C_\mu ]$ closed for all 
$\mu< \kappa$. Then $\{x_\alpha\}_{\alpha<\kappa}$ contains a subsequence $\{x_{\alpha_\sigma}\}_{\sigma<\kappa}$ which is a Schauder basic sequence. In particular, one may take above a sequence of points of a strong M-basis $\{(x_\alpha,f_\alpha )\}_{\alpha<\kappa}$. Moreover, it suffices above that sets $C_\mu$ merely agree on $S$ with a $\mathrm{club}$ set, instead of being $\mathrm{club}$ sets themselves. 
\end{theorem}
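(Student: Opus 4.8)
The plan is to produce, for the extracted subsequence, basis projections whose norms are bounded by a single constant; by the definition of a (transfinite) basic sequence recalled above, this is exactly what must be achieved. The starting observation is that closedness plus disjointness yields a bounded projection: if $V_\mu := [x_{\alpha_\gamma}\colon \gamma\le\mu]$ and $W_\mu := [x_\alpha\colon \alpha\in C_\mu]$ satisfy $V_\mu\cap W_\mu=\{0\}$ and $V_\mu+W_\mu$ is closed, then the algebraic projection $R_\mu$ onto $V_\mu$ along $W_\mu$ is bounded, by the open mapping theorem applied to the continuous bijection $V_\mu\times W_\mu\to V_\mu+W_\mu$. To secure the disjointness I would first replace each $C_\mu$ by its tail $C_\mu\cap(\alpha_\mu,\kappa)$, which is again club; since $V_\mu\subseteq[x_\alpha\colon\alpha\le\alpha_\mu]$ while the tail span sits inside $[x_\alpha\colon\alpha>\alpha_\mu]$, the standing hypothesis $[x_\alpha\colon\alpha\le\alpha_\mu]\cap[x_\alpha\colon\alpha_\mu<\alpha<\kappa]=\{0\}$ forces $V_\mu\cap W_\mu=\{0\}$. (Verifying that passing to the tail keeps the sum closed is a small technical point, handled with the same disjointness.) The crucial gain is that once $R_\mu$ is bounded and $z$ lies in any subspace of $W_\mu$ while $y\in V_\mu$, one has $R_\mu(y+z)=y$ and hence $\|y\|\le\|R_\mu\|\,\|y+z\|$; so $\|R_\mu\|$ controls the basis constant at level $\mu$, provided the tail of the chosen subsequence is buried in $W_\mu$.

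Two difficulties remain and must be resolved simultaneously: the norms $\|R_\mu\|$ need not be bounded in $\mu$, and the clubs $C_\mu$ live on the ambient index $\alpha$ whereas the projections are indexed by the position $\mu$ inside $S$. To reconcile the two index scales I would bring in the increasing enumeration $e\colon\kappa\to\kappa$, $e(\mu)=\alpha_\mu$, of $S$ and its set of closure points $C_e=\{\delta\colon e[\delta]\subseteq\delta\}$, which is club. The device I expect to be decisive is that the fixed-point set $\{\delta\colon e(\delta)=\delta\}$ equals $S\cap C_e$: for $\delta\in C_e$ the restriction $e\restriction\delta$ enumerates $S\cap\delta$ in order type $\delta$, so $e(\delta)=\delta$ exactly when $\delta\in S$. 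As $C_e$ is club and $S$ stationary, $\mathrm{Fix}(e)=S\cap C_e$ is stationary, and on $\mathrm{Fix}(e)$ the two index scales coincide, $e(\delta)=\delta$.

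With this in hand the uniform bound comes for free from regularity: writing $\mathrm{Fix}(e)=\bigcup_{n<\omega}\{\delta\in\mathrm{Fix}(e)\colon\|R_\delta\|\le n\}$ and using that the nonstationary ideal on $\kappa$ is countably complete, some level set $E^\ast=\{\delta\in\mathrm{Fix}(e)\colon\|R_\delta\|\le N\}$ is stationary. I would then take the diagonal intersection $\widetilde D=\triangle_{\mu<\kappa}\widetilde C_\mu$ of the clubs (in the relaxed statement $\widetilde C_\mu$ is the club agreeing with $C_\mu$ on $S$; in the main statement $\widetilde C_\mu=C_\mu$), which is club, and set $T=E^\ast\cap\widetilde D$, a stationary — in particular cofinal — subset of $S$. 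Enumerating $T=\{s_\sigma\}_{\sigma<\kappa}$ gives the candidate subsequence $\{x_{s_\sigma}\}_{\sigma<\kappa}$. Because each $s_\sigma\in\mathrm{Fix}(e)$ one has $s_\sigma=\alpha_{s_\sigma}$, so $s_\sigma$ is simultaneously a position in $S$ and an ambient index, and $\|R_{s_\sigma}\|\le N$. For $\tau>\sigma$ diagonality gives $s_\tau\in\widetilde C_{s_\sigma}$ (as $s_\tau>s_\sigma$), whence $s_\tau\in\widetilde C_{s_\sigma}\cap S=C_{s_\sigma}\cap S$, so the whole tail $[x_{s_\tau}\colon\tau>\sigma]$ sits inside $W_{s_\sigma}$; here is exactly where ``agreeing with a club on $S$'' is all that is needed. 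Applying the level-$s_\sigma$ inequality $\|y\|\le\|R_{s_\sigma}\|\,\|y+z\|\le N\|y+z\|$ to $y\in[x_{s_\tau}\colon\tau\le\sigma]$ and $z\in[x_{s_\tau}\colon\tau>\sigma]$, and taking suprema over initial segments (limit stages being bounded by the successor ones), shows that $\{x_{s_\sigma}\}_{\sigma<\kappa}$ is basic with constant $N$.

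The main obstacle, and the step I would spend the most care on, is the clash of the two index scales together with the unboundedness of $\|R_\mu\|$; the clean way through is the observation that $\mathrm{Fix}(e)=S\cap C_e$ is stationary, which at once aligns the scales and lets regularity deliver a single constant $N$. A secondary, purely technical point is confirming that restricting each club to the tail $(\alpha_\mu,\kappa)$ preserves closedness of the relevant sum; this I would dispatch with the standing disjointness. Finally, for the ``in particular'' clause I would only note that a strong M-basic sequence automatically satisfies the standing disjointness hypothesis, so the theorem applies verbatim.
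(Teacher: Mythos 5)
Your proposal is correct in substance and follows the same overall strategy as the paper's proof: use club/stationary combinatorics to arrange that every tail of the extracted subsequence lies in the club-span attached to each earlier point, obtain bounded projections from the closed direct-sum decompositions, and uniformize the projection constant by a pigeonhole argument. The implementations of the two combinatorial steps differ, though. Where you take the one-shot diagonal intersection $\triangle_{\mu<\kappa}\widetilde{C}_\mu$ and intersect with the fixed-point set $\mathrm{Fix}(e)=S\cap C_e$ of the enumeration of $S$ (a clean way to collapse the two index scales), the paper runs a transfinite recursion, choosing $\alpha_{\gamma_\mu}=\bigwedge\bigcap_{\nu<\mu}S\cap C_{\alpha_{\gamma_\nu}}$, so that each new point lies in all previously used clubs; the index clash is absorbed there not by fixed points but by invoking the hypothesis at the ambient ordinal $\alpha_{\gamma_\nu}$, which simply produces a larger initial segment on the $V$-side. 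And where you get the uniform bound $N$ from countable completeness of the nonstationary ideal (splitting $\mathrm{Fix}(e)$ into level sets of $\|R_\delta\|$), the paper observes $\liminf_{\mu\to\kappa}\|P_\mu\|=C<\infty$ using regularity of $\kappa$ and then passes to a cofinal set of indices where $\|P_\mu\|\leq C+1$. Both devices work; yours has the minor advantage that the final index set stays stationary rather than merely cofinal, and it avoids the recursion entirely.

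One caveat deserves emphasis. The step you call a ``small technical point'' --- that replacing $C_\mu$ by its tail $C_\mu\cap(\alpha_\mu,\kappa)$ preserves closedness of the sum --- is \emph{not} dispatched by disjointness of the shrunken pair. In general, if $V+W$ is closed and $W'\subset W$ is a closed subspace with $V\cap W'=\{0\}$, the sum $V+W'$ can fail to be closed: in $\ell^2$ with orthonormal basis $\{e_n\}\cup\{f_n\}$ take $V=[e_n\colon n<\omega]$, $W=\ell^2$, and $W'=[e_n+f_n/n\colon n<\omega]$; then $V+W=\ell^2$ is closed and $V\cap W'=\{0\}$, yet $V+W'$ is not closed since the angle between $V$ and $W'$ is zero. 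What the open-mapping/projection argument genuinely needs is that the hypothesis pair itself be disjoint, $[x_{\alpha_\gamma}\colon\gamma\leq\mu]\cap[x_\alpha\colon\alpha\in C_\mu]=\{0\}$, for then the angle is positive and closedness (with boundedness of the induced projection) passes to every pair of closed subspaces of the two summands --- and then no shrinking is needed at all. This is not a gap relative to the paper: the paper's own proof glosses exactly the same point (its ``by inspecting the sequence \ldots is closed'' requires the same disjoint reading of the hypothesis), but you should state that reading rather than claim the small-pair disjointness settles it. Finally, for the ``in particular'' clause you cite only the disjointness of spans over disjoint index sets; you also need the closedness half, namely the strong M-basis property that $[x_\alpha\colon\alpha\in\Lambda]+[x_\alpha\colon\alpha\in\Gamma]$ is closed for disjoint $\Lambda,\Gamma$, which is what furnishes clubs $C_\mu$ (e.g. the tails $(\alpha_\mu,\kappa)$) verifying the hypothesis; this is precisely the fact the paper quotes at the end of its proof.
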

\begin{proof}
We will construct by transfinite recursion a subsequence of $S=\{\alpha_\gamma \}_{\gamma<\kappa}$ 
in the following manner. Let $C_\mu \subset \kappa$ be sets as in the assumptions agreeing on $S$ with a 
$\mathrm{club}$ set, thus let $\widehat{C}_\mu \subset \kappa$ be $\mathrm{club}$ sets with 
$\widehat{C}_\mu \cap S = C_\mu $.
 
Let $\alpha_{\gamma_0}=\alpha_0$. For $0<\mu< \kappa$ we put 
\[\alpha_{\gamma_\mu} = \bigwedge  \bigcap_{\nu<\mu}  S \cap 
C_{\alpha_{\gamma_\nu}}.\]
Indeed, this construction is sensible by the basic properties of $\mathrm{club}$ and stationary sets. 
By inspecting the sequence $\{\alpha_{\gamma_\mu}\}_{\mu<\kappa}$ we observe that 
\[[x_{\alpha_{\gamma_\nu}} \colon \nu \leq \mu] + [x_{\alpha_{\gamma_\nu}} \colon \mu < \nu < \kappa]\]
is closed for all $\mu<\kappa$.

Recall that for closed subspaces $E,F \subset \X$ with $E\cap F = \{0\}$ the sum $E + F$ is closed if and only if the angle between the spaces is positive (see \cite{Hajek_biortsyst}).

It follows that we may consider the above decomposition as a direct sum and thus there are continuous linear projections 
\[P_\mu \colon [x_{\alpha_{\gamma_\nu}} \colon \nu \leq \mu] \oplus [x_{\alpha_{\gamma_\nu}} \colon \mu < \nu < \kappa] \to [x_{\alpha_{\gamma_\nu}} \colon \nu \leq \mu] ,\quad  \mu < \kappa .\]

Observe that 
\[\liminf_{\mu \to \kappa} \|P_\mu \| =C \in [1,\infty)\]
according to the regularity of $\kappa$ and the fact that $\inf_{\mu < \nu< \kappa} \|P_\nu \| \in [1,\infty)$ for all $\mu < \kappa$.
Thus we may choose a subsequence $\{\mu_\theta\}_{\theta<\kappa}$ such that 
$\|P_{\mu_\theta} \| \leq C+1$ for all $\theta< \kappa$. Then it is easy to see that 
$\{x_{\alpha_{\gamma_{\mu_\theta}}}\}_{\theta<\kappa}$ defines a Schauder basic sequence.

Finally, recall the well-known property of strong M-bases, namely, that for each disjoint $\Lambda, \Gamma \subset \kappa$ one has that 
$[x_\alpha \colon \alpha\in \Lambda] + [x_\alpha \colon \alpha\in \Gamma]$ is closed, see \cite{Hajek_biortsyst}.
\end{proof}

In the following result we consider basic sequences having possibly a special property: 
reverse monotone, modular unconditional, $1$-suppression unconditional, $1$-unconditional.  One may alternatively use the versions up to constant $C$ (e.g. projection constants uniformly bounded by $C$). By a \emph{transfinite block basic sequence} 
(t.b.b.s.) of a 
sequence $\{x_\alpha \}_{\alpha<\kappa}\subset \X$ we mean a long basic sequence of the form
\[u_\gamma = \sum_{\eta_{-}^{(\gamma)}\leq \sigma < \eta_{+}^{(\gamma)}} a_\sigma x_{\alpha_{\sigma}},\quad 
\gamma<\kappa\]
where $a_\sigma \in \R$, $\eta_{-}^{(\gamma_1)} < \eta_{+}^{(\gamma_1)} \leq \eta_{-}^{(\gamma_2)}<\eta_{+}^{(\gamma_2)} <\kappa$
for $\gamma_1 < \gamma_2 <\kappa$ and the summation is defined in a standard way by transfinite recursion with convergence in the order topology to norm topology sense.
 
\begin{theorem}
Let $\X$ be a Banach space with a Schauder basis $\{e_\alpha \}_{\alpha<\kappa}\subset \X$. 
\begin{enumerate}
\item{We have the following schema; suppose that 
each t.b.b.s. of $\{e_\alpha \}_{\alpha<\kappa}$ admits a further t.b.b.s. which has property (P), any well-defined property of transfinite basic sequences of order type $\kappa$ (e.g. symmetric). Then $\{z_\beta \}_{\beta<\kappa}\subset \S_\X$, $z_\beta \stackrel{\mathrm{w}}{\longrightarrow} 0$, admits a t.b.b.s. $\{u_\gamma \}_{\gamma<\kappa}$ with property (P). }
\item{Assume that $\X$ has Corson's property (C) and $\{e_\alpha \}_{\alpha<\kappa}$ has some of the properties listed above. Suppose that $\{z_\beta \}_{\beta<\kappa}\subset \X$ is a $\SD$ sequence (e.g. a weakly null or an M-basic sequence). Then there is a monotone basic sequence $\{z_{\beta_{\sigma}}\}_{\sigma < \kappa}$ having the same listed properties 
as $\{e_\alpha \}_{\alpha<\kappa}$. }
\end{enumerate}
\end{theorem}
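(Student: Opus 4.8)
The plan is to reduce both parts to a single extraction principle: from a bounded sequence that is weakly convergent to $0$ I would pull out a subsequence which is, \emph{genuinely} and without any perturbation, a transfinite block basic sequence of $\{e_\alpha\}$. Two observations make this possible, both using $\cf(\kappa)=\kappa>\omega$. First, every $z\in\X$ has bounded support with respect to the basis: letting $\eta_n<\kappa$ be least with $\|z-P_{\eta_n}z\|<1/n$, strong continuity of the basis projections at limit ordinals together with $\cf(\kappa)>\omega$ gives $z=P_{\eta_\infty}z$ for $\eta_\infty=\bigvee_n\eta_n<\kappa$, so that $\mathrm{supp}(z)\subset[0,r(z))$ with $r(z)=\eta_\infty<\kappa$. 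Second, for each fixed $\alpha$ the coordinate functional $e_\alpha^\ast\in\X^\ast$ gives $e_\alpha^\ast(z_\beta)\to0$ as $\beta\to\kappa$, so $\{\beta:e_\alpha^\ast(z_\beta)\neq0\}$, being a countable union of bounded sets, is itself bounded in $\kappa$. I would then build the subsequence by recursion: with $\{\beta_\tau\}_{\tau<\sigma}$ chosen, set $\zeta_\sigma=\bigvee_{\tau<\sigma}r(z_{\beta_\tau})<\kappa$; the set of indices $\beta$ whose support meets $[0,\zeta_\sigma)$ is a union of fewer than $\kappa$ bounded sets and hence bounded, so I can select $\beta_\sigma$ above it (and above the previous $\beta_\tau$) with $\mathrm{supp}(z_{\beta_\sigma})\subset[\zeta_\sigma,\kappa)$. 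The resulting $\{z_{\beta_\sigma}\}_{\sigma<\kappa}$ has pairwise disjoint increasing supports and is therefore a block basic sequence of $\{e_\alpha\}$.

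For part (1), $\{z_\beta\}$ is weakly convergent to $0$ by hypothesis, so the extraction applies and $\{z_{\beta_\sigma}\}$ is a t.b.b.s. of $\{e_\alpha\}$. I would then invoke the standing assumption to obtain a further t.b.b.s. $u_\gamma=\sum_{\sigma\in I_\gamma}a_\sigma z_{\beta_\sigma}$ with property (P); since the $u_\gamma$ are blocks of the subsequence $\{z_{\beta_\sigma}\}$ of $\{z_\beta\}$, the same sequence is a t.b.b.s. of $\{z_\beta\}$, which is exactly the conclusion. Note that no appeal to property (C) is needed here, in keeping with the purely combinatorial character of this part.

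For part (2), I would first use property (C) and $\cf(\kappa)>\omega$ to rewrite the $\SD$ hypothesis as weak convergence of $\{z_\beta\}$ to $0$, so that the extraction again produces a block basic sequence $\{z_{\beta_\sigma}\}$ of $\{e_\alpha\}$. Because its basis projections $Q_\mu$ and coprojections $I-Q_\mu$ are literally restrictions of $P_{\zeta_\mu}$ and $I-P_{\zeta_\mu}$, each listed property of $\{e_\alpha\}$ — reverse monotonicity, modular unconditionality, $1$-suppression and $1$-unconditionality — is inherited \emph{exactly} by $\{z_{\beta_\sigma}\}$ (in the modular case by adjoining the support of a new block one coordinate at a time). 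Since every unconditional-type property on the list already entails monotonicity, in those cases the monotone basic sequence with the required properties is obtained at once.

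The step I expect to be the main obstacle is securing monotonicity with the exact constant $1$ when $\{e_\alpha\}$ is assumed only reverse monotone: block inheritance then supplies the tail bound $\|(I-Q_\mu)x\|\le\|x\|$ but not the head bound. To remedy this I would superimpose on the extraction a weak–lower–semicontinuity thinning. Since $z_{\beta_\sigma}\stackrel{\w}{\longrightarrow}0$, for a head $h=\sum_{\sigma<\mu}c_\sigma z_{\beta_\sigma}$ one has $\|h\|\le\liminf_\rho\|h+c\,z_{\beta_\rho}\|$, and a diagonal choice over a countable dense set of heads at each stage lets me arrange that adjoining later terms never lowers the norm, i.e. $\|Q_\mu\|=1$. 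The hard part will be the bookkeeping of tolerances across the $\kappa$ recursion steps, especially at limit ordinals, so that the accumulated error does not destroy the exact value $C=1$; the $C$-versions permitted by the remark preceding the statement are immediate in any case.
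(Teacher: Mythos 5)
Your core extraction --- using the coordinate functionals $e_\alpha^*$, the boundedness of supports (from $\cf(\kappa)>\omega$), and the regularity of $\kappa$ to pass to a subsequence of $\{z_\beta\}$ with successive supports --- is correct, and it is the same device the paper itself uses (``by using the weak convergence and the basis coefficient functionals \dots here we apply the regularity of $\kappa$''). Consequently your part (1) is complete, and in fact slightly cleaner than the paper's own treatment, because it makes explicit that no appeal to property (C) or to any prior monotone-extraction theorem is needed there: successiveness of supports alone makes $\{z_{\beta_\sigma}\}$ a t.b.b.s.\ of $\{e_\alpha\}$, the schema hypothesis hands you a further t.b.b.s.\ with (P), and that sequence is automatically a t.b.b.s.\ of $\{z_\beta\}$. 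The exact inheritance of the listed properties by disjointly supported blocks (reverse monotonicity, modularity, $1$-suppression, $1$-unconditionality) is also fine, as is your observation that the unconditional-type properties already entail monotonicity.

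The genuine gap is the one you flag yourself: exact monotonicity in part (2) when $\{e_\alpha\}$ is only reverse monotone, and your proposed liminf remedy cannot be repaired. Weak lower semicontinuity yields only $\|h\|\le\liminf_\rho\|h+c\,z_{\beta_\rho}\|$, which is compatible with $\|h+c\,z_{\beta_\rho}\|<\|h\|$ for \emph{every} later index $\rho$: take $h=(1,0)$ and $z_\rho=(-\delta_\rho,e_\rho)$ in $\R\oplus_\infty c_0(\kappa)$ with $\delta_\rho\to0$ and $c=1/2$; then each adjunction strictly lowers the norm while the liminf bound holds. So no thinning makes even a single head inequality exact, and any bookkeeping with positive tolerances $\varepsilon_\sigma$ yields a basis constant of the form $\prod_\sigma(1-\varepsilon_\sigma)^{-1}>1$, never exactly $1$. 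Exact monotonicity requires exact annihilation --- norming functionals of each head vanishing identically on all later terms --- and this is precisely what the paper imports from \cite{Tal2} rather than deriving from the basis: since part (2) assumes property (C), the results of \cite{Tal2} give both that the $\SD$ hypothesis upgrades to weak convergence to $0$ and that $\{z_\beta\}$ has a subsequence which is an \emph{exactly} monotone basic sequence. Running your support-separation on that subsequence (monotonicity passes to further subsequences, since the basis projections restrict) produces a sequence that is simultaneously monotone and, by block inheritance, reverse monotone (or $1$-unconditional, etc.). So the fix is to replace your final paragraph by an appeal to the extraction theorem of \cite{Tal2} and to apply your (correct) combinatorial thinning after it; as written, the monotonicity step would fail.
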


We do not know if in the last part of the statement  one may consider some kind of transfinite spreading model approach, in place of a double t.b.b.s.

\begin{proof}
Let $\{z_\beta \}_{\beta<\kappa}\subset \X$ be as in the second part of the statement.
According to the Corson property and the considerations in \cite{Tal2} we obtain that $\{z_\beta \}_{\beta<\kappa}\subset \X$ converges weakly to $0$ and contains a subsequence $\{z_{\beta_{\sigma}}\}_{\sigma < \kappa}$ which is a monotone basic sequence. Moreover, by using the weak convergence and the basis coefficient functionals
of $\{e_\alpha \}_{\alpha<\kappa}$, we can choose the subsequence in such a way that the supports 
with respect to the basis satisfy $\bigvee \mathrm{supp} (z_{\beta_{\sigma_1}}) < \bigwedge \mathrm{supp} (z_{\beta_{\sigma_2}})$ for $\sigma_1 < \sigma_2 < \kappa$. Indeed, here we apply the regularity of $\kappa$. Now the verification of the first part of the statement is straight-forward.

Next we turn to the schema involving t.b.b.s.:s. 
Fix a t.b.b.s. of $\{z_{\beta_{\sigma}}\}_{\sigma < \kappa}$,
$u_\gamma = \sum_{\eta_{-}^{(\gamma)}\leq \sigma < \eta_{+}^{(\gamma)}} a_\sigma z_{\beta_{\sigma}}$,
$\gamma<\kappa$. By virtue of the weak convergence $z_\beta \stackrel{\mathrm{w}}{\longrightarrow} 0$ we may pass on to a subsequence $\{u_{\gamma_\theta} \}_{\theta<\kappa}$ which is a t.b.b.s. of 
$\{e_\alpha \}_{\alpha<\kappa}$. By the assumptions there is a further t.b.b.s.
$\{v_\gamma \}_{\gamma<\kappa}$ of $\{u_{\gamma_\theta} \}_{\theta<\kappa}$ which has property (P). Clearly, $\{v_\gamma \}_{\gamma<\kappa}$ is a t.b.b.s. of 
$\{z_\beta \}_{\beta < \kappa}$ as well.
\end{proof}
Compare the norming annihilator condition below to Proposition \ref{prop: bi_embed}.
\begin{proposition}
Suppose that $\X$ is a non-separable Banach space satisfying $(\sigma)$ and $\Y^\bot$ $1$-norms
a coseparable subspace of $\X$ for each separable subspace $\Y\subset \X$. Then $\X$ has a bimonotone basic sequence of length $\omega_1$. Moreover, each M-basis of $\X$ (provided that exists) has $\omega_1$-many mutually disjoint countable blocks that support such a basic sequence. If $\dens(\X)=\omega_1$ then the supports can be chosen to be successive in any (ex-ante) order type $\omega_1$ well-ordering of the M-basis. 
\end{proposition}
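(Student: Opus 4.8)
The plan is to produce the sequence $\{u_\gamma\}_{\gamma<\omega_1}$ by transfinite recursion, forcing at every cut the two inequalities that together amount to bimonotonicity: $\max(\|y\|,\|z\|)\le\|y+z\|$ whenever $y$ lies in the head $[u_\nu:\nu<\eta]$ and $z$ in the tail $[u_\delta:\eta\le\delta<\omega_1]$. The reverse-monotone half $\|z\|\le\|y+z\|$ is precisely what the annihilator hypothesis is built to deliver, while the monotone half $\|y\|\le\|y+z\|$ I would obtain from a countable norming set of the (separable) head. Property $(\sigma)$ is the glue: it keeps the countably many coseparable constraints imposed at each stage coseparable, hence nonseparable, so that there is always room for the next vector.

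In detail, at stage $\gamma<\omega_1$ the head $H_\gamma=[u_\nu:\nu<\gamma]$ is separable, so there is a countable set $\mathcal{G}_\gamma\subset\S_{\X^*}$ that $1$-norms $H_\gamma$ (for a dense sequence in $\S_{H_\gamma}$, pick Hahn--Banach functionals attaining the norm). Put $W_\gamma=\bigcap_{g\in\mathcal{G}_\gamma}\ker g$, a countable intersection of hyperplanes and hence coseparable, and apply the hypothesis to $\Y=H_\gamma$ to get a coseparable $\Z_\gamma$ that $H_\gamma^{\bot}$ $1$-norms. Now set
\[V_\gamma=\bigcap_{\eta\le\gamma}\bigl(W_\eta\cap\Z_\eta\bigr),\]
a countable intersection of coseparable subspaces, hence coseparable by $(\sigma)$; since $\dens(\X)\le\dens(V_\gamma)+\codens(V_\gamma)$ and $\X$ is nonseparable, $V_\gamma$ is nonseparable and we may choose $u_\gamma\in\S_{V_\gamma}$. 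To verify bimonotonicity, observe that for $\eta\le\delta$ one has $u_\delta\in V_\delta\subseteq W_\eta\cap\Z_\eta$, so the whole closed tail $[u_\delta:\eta\le\delta<\omega_1]$ sits inside the closed subspaces $W_\eta$ and $\Z_\eta$. Fix a cut $\eta$, a head vector $y\in H_\eta$, and a tail vector $z$. Since $z\in W_\eta$ every $g\in\mathcal{G}_\eta$ kills $z$, whence $\|y+z\|\ge\sup_{g\in\mathcal{G}_\eta}g(y+z)=\sup_{g\in\mathcal{G}_\eta}g(y)=\|y\|$ by the choice of $\mathcal{G}_\eta$; since $z\in\Z_\eta$ and every $f\in H_\eta^{\bot}$ kills $y$, the $1$-norming of $\Z_\eta$ gives $\|y+z\|\ge\sup_{f\in\S_{H_\eta^{\bot}}}f(y+z)=\sup_{f\in\S_{H_\eta^{\bot}}}f(z)=\|z\|$. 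Thus the sequence is bimonotone with constant $1$; taking $z=u_\eta$ in the second inequality already shows $\|u_\eta+y\|\ge1$ for $y\in H_\eta$, so the vectors are genuinely independent and $\{u_\gamma\}_{\gamma<\omega_1}$ is a bimonotone basic sequence of length $\omega_1$.

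For the M-basis refinement, fix an M-basis $\{(x_\alpha,f_\alpha)\}_{\alpha<\lambda}$ of $\X$ (so $\lambda=\dens(\X)\ge\omega_1$). By biorthogonality each $f_\beta$ vanishes on $\overline{[x_\alpha:\alpha\ne\beta]}$, so any vector lying in the closed span of a countable index set $B$ of basis vectors is automatically supported in $B$; it therefore suffices to run the recursion above while choosing $u_\gamma$ inside $V_\gamma\cap[x_\alpha:\alpha\in B_\gamma]$ for a countable block $B_\gamma$ disjoint from the (countably many) previously used indices. Since $V_\gamma$ is coseparable, $V_\gamma^{\bot}$ is $\wst$-separable and $V_\gamma=\bigcap_k\ker h_k$ for a sequence $h_k$; I would select a fresh countable basic block $\{x_\alpha:\alpha\in B_\gamma\}$ and then kill the countably many functionals $h_k$ on it by a recursive elimination, producing a convergent nonzero combination $u_\gamma=\sum_{\alpha\in B_\gamma}c_\alpha x_\alpha\in V_\gamma$. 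When $\dens(\X)=\omega_1$ one intersects in addition with the coseparable tail spans $\overline{[x_\alpha:\alpha\ge\zeta]}$ (coseparable because the complementary head is separable) and advances an increasing parameter $\zeta_\gamma$ by bookkeeping, so that the blocks occupy successive intervals of any prescribed order-type-$\omega_1$ enumeration of the M-basis.

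The main obstacle is exactly this last realization step: for a general (not strong) M-basis a vector of $V_\gamma$ need not lie in the closed span of its own support, so one cannot simply take an arbitrary element of $V_\gamma$ and read off a countable block. The available leverage is that $\X/V_\gamma$ is separable while the supply of basis vectors is uncountable, which forces their images to accumulate and hence forces a nontrivial countable relation landing in $V_\gamma$; the delicate part is to drive the elimination so that the resulting combination converges in norm, stays nonzero, and keeps the blocks mutually disjoint (and successive in the last case). By contrast, once the coseparable constraints $V_\gamma$ are in place the two bimonotonicity inequalities are immediate, so the entire difficulty is concentrated in the geometro-combinatorial control of the supports.
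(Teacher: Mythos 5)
Your construction is essentially the paper's own proof: the paper's sketch likewise picks, at each stage $\theta<\omega_1$, a countable family $\mathcal{F}\subset\X^*$ that $1$-norms the head, a coseparable subspace $W$ that the head's annihilator $1$-norms (the hypothesis), and uses $(\sigma)$ to place the next vector in $W\cap\bigcap_{f\in\mathcal{F}}\ker f$, which yields exactly your two dual inequalities; if anything, you are more careful than the paper in intersecting the constraints over all earlier stages so that the tails stay inside every $W_\eta\cap\Z_\eta$. The ``moreover'' clauses about M-basis blocks, where you flag the support-realization difficulty for a general (non-strong) M-basis, are not treated in the paper's sketch at all, so the gap you honestly acknowledge there is not a divergence from anything the paper actually proves.
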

\begin{proof}[Sketch of Proof]
In the transfinite construction the crucial step is observing that if $\{y_\alpha \}_{\alpha < \theta} \subset \X$, $\theta<\omega_1$, is a bimonotone basic sequence, then there is a countable family $\mathcal{F} \subset \X^*$ which $1$-norms $[y_\alpha \colon \alpha < \theta]$. According to the assumptions there is 
a coseparable subspace $W \subset \X$ $1$-normed by $[y_\alpha \colon \alpha < \theta]^\bot$.
By using the $(\sigma)$ condition of $\X$ we can verify that 
$W\cap \bigcap_{f\in  \mathcal{F}} \ker f$ is coseparable as well and then choosing $y_\theta$ in this set
results in the required construction.   
\end{proof}
 
\section{Mixed Erd\"{o}s cardinals and long unconditional basic sequences}

The following combinatorics-driven analysis owes to the considerations in \cite{DLT, ketonen, LoTo2}
where countable unconditional basic sequences are extracted from long weakly null sequences of normalized vectors in Banach spaces. Here will study instead the extraction of \emph{uncountable} unconditional basic sequences. The weakly null sequences are required to be of high cardinality because the core of the argument is a purely combinatorial one. 

We will apply special cardinal numbers, such as Erd\"{o}s cardinals, originating from the infinitary combinatorics. Consider a smallest possible a cardinal $\kappa$ which satisfies $\kappa \to (\lambda, \aleph_0 )^{<\omega}_2$. This is clearly bounded from above by $\lambda$-Erd\"{o}s cardinal, assuming the relative consistency of the latter with the ZFC (see \cite{EMHR, Jech}). Presumably, such a cardinal $\kappa$ can be considered a large cardinal as well but we do not know reasonable lower bounds; for instance what is the relationship between statements such as
$\kappa \to (\aleph_1 ,\aleph_0 )^{<\omega}_2$ and $\kappa \to (\aleph_0 )^{<\omega}_2$.

The following result is a cleen consequence of a more general result which we will give subsequently
(see Theorem \ref{thm: comb}).
\begin{corollary}{$(\kappa \to (\lambda, \aleph_0 )^{<\omega}_2)$}\label{cor: ramsey}
\ \\
\noindent Let $\X$ be a Banach space, $\varepsilon>0$ a real, and $\kappa$, $\lambda$ be infinite cardinals as in the set theoretic assumption appearing above.
Then each normalized weakly null sequence $\{x_\alpha \}_{\alpha<\kappa} \subset \X$ contains 
a $1+\varepsilon$-suppression unconditional subsequence of length $\lambda$.
\end{corollary}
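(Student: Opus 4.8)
The plan is to encode $(1+\varepsilon)$-suppression unconditionality as a two-colouring of $[\kappa]^{<\omega}$ and to exploit the mixed partition relation \eqref{eq: partition} so that the countable alternative is eliminated by a \emph{separable reduction} through spreading models. First I would fix, at each level $n$, a countable norm-dense family $\mathcal{T}_n$ of \emph{test patterns}: pairs $(S,(c_i)_{i=1}^n)$ with $S\subseteq\{1,\dots,n\}$ and rational coefficients $c_i$ ranging over a dense subset of a bounded set. For a finite set $F=\{\alpha_1<\dots<\alpha_n\}\subset\kappa$ I set $c(F)=0$ provided
\[\Big\|\sum_{i\in S}c_i x_{\alpha_i}\Big\| \le \Big(1+\tfrac{\varepsilon}{2}\Big)\Big\|\sum_{i=1}^n c_i x_{\alpha_i}\Big\|\quad\text{for every }(S,(c_i))\in\mathcal{T}_n,\]
and $c(F)=1$ otherwise; this is a genuine $\{0,1\}$-colouring, since the displayed condition is a conjunction over the countable set $\mathcal{T}_n$. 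Applying $\kappa\to(\lambda,\aleph_0)^{<\omega}_2$ to $c$ produces $K_0$ with $|K_0|=\lambda$ and a countable $K_1$ such that for every $n$ either $c([K_0]^n)=\{0\}$ or $c([K_1]^n)=\{1\}$.

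Next I would check that $0$-homogeneity of $K_0$ controls all smaller configurations. Given any finite $A\subset B\subset K_0$ with $|B|=m$ and any real coefficients, I embed $B$ into an $n$-element subset of $K_0$ (with $n\ge m$ a level at which $K_0$ is $0$-homogeneous) by padding with $n-m$ further elements carrying coefficient $0$; since such padded patterns lie in $\mathcal{T}_n$ and zero coefficients affect neither norm, $0$-homogeneity at level $n$ yields the suppression inequality for $A\subset B$ with rational coefficients, and density together with continuity of the finite sums upgrades the constant from $1+\tfrac{\varepsilon}{2}$ to $1+\varepsilon$ for arbitrary real coefficients. Hence, as soon as $c([K_0]^n)=\{0\}$ holds for \emph{all} $n$, the subsequence $\{x_\alpha\}_{\alpha\in K_0}$ is a $1+\varepsilon$-suppression unconditional basic sequence of length $\lambda$, as required.

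It therefore remains to rule out the countable alternative at every level, and this is where I would invoke the separable structure of $K_1$. The sequence $\{x_\alpha\}_{\alpha\in K_1}$ is a normalized weakly null sequence, so by the Brunel--Sucheston theory it has a subsequence generating a spreading model whose basis $\{\tilde e_i\}$ is $1$-unconditional, and therefore $1$-suppression unconditional. Consequently, for each $m$ and each $\delta>0$ there are indices in $K_1$ whose corresponding $m$ vectors are $(1+\delta)$-equivalent to $\{\tilde e_i\}_{i<m}$, hence form an $m$-element subset $F\subset K_1$ that is $(1+\tfrac{\varepsilon}{2})$-suppression unconditional for all the rational patterns, i.e. $c(F)=0$. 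Thus $c([K_1]^m)\neq\{1\}$ for every $m$, which by the partition relation forces $c([K_0]^m)=\{0\}$ for every $m$, completing the argument.

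The main obstacle is the analytic input on the countable side: the fact that the spreading model of a normalized weakly null sequence is $1$-(suppression) unconditional, and that its initial segments are finitely approximated inside the prescribed index set $K_1$. This is precisely what prevents a Maurey--Rosenthal type obstruction (no infinite unconditional subsequence) from blocking the construction, since only arbitrarily long \emph{finite} almost-unconditional subsets are needed to eliminate the countable colour-$1$ witness; the role of the mixed relation $\kappa\to(\lambda,\aleph_0)^{<\omega}_2$ is exactly to separate the large target length $\lambda$ from this countable, separably handled witness. The remaining bookkeeping — choosing $\mathcal{T}_n$, the $\varepsilon/2$ versus $\varepsilon$ slack, and the padding across levels — is routine.
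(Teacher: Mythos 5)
Your proposal is correct and, at its core, follows the same route as the paper: the paper deduces the corollary from Theorem \ref{thm: comb}, whose proof likewise colors finite subsets by $(1+\varepsilon)$-(suppression) unconditionality, lets the partition relation hand over the size-$\lambda$ homogeneous set, and kills the countable alternative by the fact (quoted there from \cite{DLT}) that every normalized weakly null sequence of order type $\omega$ contains, for each $k<\omega$ and $\varepsilon>0$, a $(1+\varepsilon)$-suppression unconditional subsequence of length $k$. Your only real deviations are that you apply $\kappa\to(\lambda,\aleph_0)^{<\omega}_2$ directly to the corollary's coloring (the paper instead passes through the formally weaker cofinality-type hypothesis of Theorem \ref{thm: comb}, which the partition relation implies after complementing the coloring), and that you re-derive the countable-side input from Brunel--Sucheston spreading models rather than citing it. Both are legitimate; the rational test-pattern bookkeeping with the $\varepsilon/2$ slack is a harmless, even careful, way of making the coloring explicit, and the order-type-$\lambda$ extraction at the end is indeed routine since suppression unconditionality is order-independent.

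One assertion should be corrected, although it does not damage the argument: the spreading model of a normalized weakly null sequence is in general \emph{not} $1$-unconditional; the classical fact is that it is suppression-$1$-unconditional (hence its unconditional constant is at most $2$, and this is sharp). Since your elimination of the colour-$1$ witness only uses suppression-$1$-unconditionality of $\{\tilde e_i\}$ together with the $(1+\delta)$-realization of its initial segments inside $K_1$, the proof stands verbatim once you quote the correct form of this fact.
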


The existence of a $\lambda$-Erd\"{o}s cardinal, which implies that of the mixed Erd\"{o}s cardinal discussed above, has a rather high consistency strength.
However, the following combinatorial set-theoretic assumption appears in turn significantly weaker than the one employed above in Corollary \ref{cor: ramsey}.

\begin{theorem}\label{thm: comb}
Let us make the following set-theoretic assumption: Suppose that there exist infinite cardinals $\kappa$ and $\lambda$ such that for each coloring $c\colon [\kappa]^{<\omega} \to \{0,1\}$ there are 
subsets $K_0 , K_1 \subset \kappa$, $|K_0|=\aleph_0$, $|K_1|=\lambda$, such that at least one of the following conditions hold:
\begin{enumerate}
\item[(i)]{$(c\restriction [K_1 ]^{<\omega})^{-1} (\{1\}) \subset [K_1 ]^{<\omega}$ is cofinal
(partial order given by inclusion),}
\item[(ii)]{There exists $1<n<\omega$ such that $c([K_0 ]^n )=\{0\}$.} 
\end{enumerate}
Then each normalized weakly null sequence $\{x_\alpha \}_{\alpha<\kappa}$ of a Banach space contains a $1+\varepsilon$-suppression unconditional subsequence of length $\lambda$.
\end{theorem}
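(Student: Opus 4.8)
The plan is to extract the desired subsequence through a separable reduction combined with a careful coloring argument on $[\kappa]^{<\omega}$. First I would fix the normalized weakly null sequence $\{x_\alpha\}_{\alpha<\kappa}$ and a target perturbation tolerance controlled by $\varepsilon$. The core idea is to encode, into a single coloring $c\colon [\kappa]^{<\omega}\to\{0,1\}$, a quantitative test for whether a finite tuple of the vectors behaves like a $1+\varepsilon$-suppression unconditional block. To do this I would first choose a countable $\varepsilon$-net of sign-and-coefficient patterns (or, more precisely, work with a fixed countable dense set of rational coefficient vectors together with the suppression test $A\subset B$), so that suppression unconditionality up to the constant $1+\varepsilon$ can be detected by finitely many inequalities among the norms $\|\sum_{\alpha\in A}c_\alpha x_\alpha\|$. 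The coloring $c$ would assign value $0$ to a finite set $F$ when the vectors indexed by $F$ pass all the relevant suppression tests (up to the net tolerance), and $1$ otherwise. The delicate point is to arrange the net/discretization so that a set $K$ with $c([K]^n)=\{0\}$ for all $n$ genuinely yields a $1+\varepsilon$-suppression unconditional subsequence, while a set on which $c$ is unboundedly equal to $1$ (in the cofinal sense of hypothesis (i)) can be refined to obtain unconditionality by a different route.

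Next I would invoke the set-theoretic hypothesis to obtain $K_0,K_1\subset\kappa$ with $|K_0|=\aleph_0$, $|K_1|=\lambda$, satisfying (i) or (ii). The case analysis is where the two alternatives are exploited. In the favorable case (ii), there is some $n$ with $c([K_0]^n)=\{0\}$, which means all $n$-element subsets of $K_0$ pass the suppression test; here I would use the weak nullity of the sequence to run a standard block/gliding-hump perturbation argument (as in \cite{DLT}) and upgrade the countable set $K_0$, together with the homogeneity on $[K_0]^n$, to produce an unconditional pattern. The point is that $\aleph_0$-length homogeneity for a fixed $n$ suffices, in a weakly null setting, to stabilize the norm estimates on all finite configurations, because weak nullity lets one thin out so that long sums decompose into controlled $n$-blocks. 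In the alternative case (i), the color-$1$ sets are cofinal in $[K_1]^{<\omega}$ of size $\lambda$; this cofinality is precisely what I would use to build, by transfinite recursion inside $K_1$, a subsequence of length $\lambda$ on which the suppression inequality holds, choosing at each stage an index witnessing the cofinal color-$1$ behavior relative to the finite set already selected. Weak nullity again provides the small-perturbation control needed to pass from the discrete net tests to the genuine norm inequalities defining $1+\varepsilon$-suppression unconditionality.

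The main obstacle I expect is designing the single coloring $c$ so that \emph{both} of the mutually exclusive conclusions (i) and (ii) translate into the \emph{same} geometric conclusion, namely a $1+\varepsilon$-suppression unconditional subsequence of length $\lambda$. This is subtler than the classical $\lambda$-Erd\H{o}s argument, where one gets full $n$-homogeneity for every $n$ simultaneously on a set of size $\lambda$; here the two alternatives give asymmetric information (countable full-homogeneity at a single $n$ versus cofinal one-sided behavior on a $\lambda$-sized set), so the coloring must be symmetric enough that either horn closes the argument. I would therefore define the color so that color $0$ records ``good'' (suppression-unconditional-looking) tuples and make sure that (a) a single fixed $n$ with countably many good tuples, via weak nullity, already forces the good behavior to propagate to a length-$\lambda$ selection, and (b) the cofinal abundance of ``bad'' tuples in (i) can be inverted—by reading the complementary norm inequality—into a length-$\lambda$ good selection. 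Getting these two reductions to target the same constant $1+\varepsilon$ and the same length $\lambda$, while respecting the regularity and the weak-null perturbation estimates, is the crux; the rest is the routine verification that the extracted subsequence satisfies the suppression inequality for all finite $A\subset B$ and all real coefficients, using the countable net to pass from discrete to continuous coefficient choices.
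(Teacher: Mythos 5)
Your colouring is oriented the wrong way round, and with that orientation neither horn of the dichotomy can deliver the theorem. You colour a finite tuple $0$ when it passes the suppression tests and $1$ when it fails them. Then hypothesis (ii), $c([K_0]^n)=\{0\}$, says that all $n$-element subsets of the \emph{countable} set $K_0$ are good; whatever gliding-hump upgrading you perform inside $K_0$ can only ever produce a subsequence of length at most $\aleph_0$, never the required length $\lambda$. And hypothesis (i) says that the \emph{bad} tuples are cofinal in $[K_1]^{<\omega}$; knowing that cofinally many specific tuples fail the suppression inequality carries no positive information, and the step where you ``invert --- by reading the complementary norm inequality --- into a length-$\lambda$ good selection'' is not an argument: a tuple violating the inequality tells you nothing about any other tuple. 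So both cases collapse, and this is not a fixable presentation issue but a structural one: the dichotomy is not meant to yield the conclusion from each horn separately.

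The correct arrangement (which is what the paper does, modulo an evident typo --- the proof body uses the convention opposite to the one stated in its definition of the colouring) is: colour a finite tuple $1$ exactly when it \emph{is} $(1+\varepsilon)$-suppression unconditional, and $0$ otherwise. The dichotomy is then resolved by \emph{refuting} horn (ii): if $c([K_0]^n)=\{0\}$ for some $n>1$, then no $n$-element subset of the countable normalized weakly null sequence $\{x_\alpha\}_{\alpha\in K_0}$ is $(1+\varepsilon)$-suppression unconditional, contradicting the finite extraction theorem for countable weakly null sequences from \cite{DLT} (every normalized weakly null $\omega$-sequence contains, for every $k<\omega$ and $\varepsilon>0$, a $(1+\varepsilon)$-suppression unconditional tuple of length $k$). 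Hence horn (i) must hold, and there cofinality does all the work: every finite $L\subset K_1$ is contained in some finite $M\subset K_1$ with $\{x_\alpha\}_{\alpha\in M}$ suppression unconditional, and suppression unconditionality is hereditary under passing to subsets, so \emph{every} finite subset of $K_1$ is $(1+\varepsilon)$-suppression unconditional; since the defining inequalities involve only finitely many vectors at a time, the whole family $\{x_\alpha\}_{\alpha\in K_1}$, trimmed to order type $\lambda$, is the desired subsequence. Note finally that your net/discretization machinery is superfluous: a colouring may be an arbitrary function of the finite set, so one simply colours by the exact geometric property rather than by rational approximations to it.
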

\begin{proof}
By using the weak-null assumption, possibly passing to a (weak-null) subsequence $\{x_{\alpha_\beta} \}_{\beta<\kappa}$, we may assume without loss of generality that no element occurs in $2$ different places in the sequence.
Let us study the set $[\{x_\alpha \colon \alpha<\kappa\}]^{<\omega}$.
In this collection we color all finite-length sequences, $c\colon [\{x_\alpha \colon \alpha<\kappa\}]^{<\omega} \to \{0,1\}$ as follows: All (finite-length) $1+\varepsilon$-unconditional basic sequences $\mapsto 0$
and other sequences $\mapsto 1$. We proceed in $2$ cases.

{\noindent \it Case} $\text{(i)}$: Suppose that there is a subfamily $\{x_{\alpha}\}_{\alpha \in K_1 }$, $K_1 \subset \kappa$, $|K_1 |=\lambda$, such that $(c\restriction [K_1 ]^{<\omega})^{-1} (\{1\}) \subset [K_1 ]^{<\omega}$ is cofinal. Let $L \subset K_1$ be a finite subset. Then there is 
a finite subset $M\subset K_1$, $L\subset M$, with $c(M)=1$. Thus $\{x_\alpha \}_{\alpha\in M}$ is 
$1+\varepsilon$-unconditional and hence $\{x_\alpha \}_{\alpha\in L}$ is such as well.
Then it is easy to see that $\{x_{\alpha}\}_{\alpha \in K_1 }$ is in fact a $1+\varepsilon$-unconditional basic sequence.

{\noindent \it Case} $\text{(ii)}$: According to the combinatorial principle in the assumptions, we are only required to exclude the second case. So, assume to the contrary that there is a subset 
$K_0 \subset \kappa$, $|K_0 | =\omega$ and $1<n<\omega$ with $c([K_0 ]^n )=\{0\}$. 

Let us consider $K_0$ with the inherited well-order and without loss of generality the order type of it 
is $\omega$. It is easy to see that $\{x_\alpha \}_{\alpha\in K_0}$ is weakly null as well. Finally, we invoke the fact that each weakly null normalized order type $\omega$ sequence of a Banach space contains for each $k < \omega$ and $\varepsilon >0 $ a $1+\varepsilon$-suppression unconditional basic sequence of length $k$, see \cite{DLT}. Thus there is $\{\alpha_{i}\}_{i<n}\subset K_0$ such that 
$\{x_{\alpha_i}\}_{i<n}$ is $1+\varepsilon$-suppression unconditional, so that 
$c ( \{x_{\alpha_{i}}\colon i<n\})=1$. This contradiction excludes the case (i).

By using the relative well-order of $L$ and the fact $|L|=\lambda$ and by disregarding 
suitably a final segment, we may pick the required subsequence having order type $\lambda$. 
\end{proof}

In the above argument the existence of long unconditional subsequence appears to be  
`countable-finite determined', or a rather strong form of separable reduction.
An adaptation of the above argument yields uncountable but non-large length sequences with 
lengthy subsequences having  a peculiar\footnote{The author thanks 
Th. Schlumprecht who asked in Maresias, Brazil, August 2014 meeting if this type of result holds, albeit 
in a slightly different setting.} 
partial unconditionality property (cf. \cite{LoTo}).  
\begin{theorem}\label{thm: combinatorial_b}
Let $\X$ be a Banach space, $\varepsilon>0$ a real, and $\kappa$, $\lambda$ be any infinite cardinals
and $1<n<\omega$. Suppose that $\kappa \to (\lambda, \aleph_0 )^{n}_2$ holds.
Then each normalized weakly null sequence $\{x_\alpha \}_{\alpha<\kappa} \subset \X$ contains 
a subsequence of length $\lambda$ which is a monotone basic sequence and such that each of its finite subsequences of length $n$ is $1+\varepsilon$-suppression unconditional. 
\end{theorem}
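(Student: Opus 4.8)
The plan is to mimic the two-case argument of Theorem \ref{thm: comb}, but now working with the finite partition relation $\kappa \to (\lambda, \aleph_0)^n_2$ rather than the $(<\!\omega)$-version. First I would pass to a subsequence, using the weak-null hypothesis, so that no vector is repeated. The coloring is then defined \emph{only on $n$-element sets}: set $c(F) = 0$ if the length-$n$ sequence $\{x_\alpha\}_{\alpha \in F}$ (indexed in increasing order) is $1+\varepsilon$-suppression unconditional, and $c(F) = 1$ otherwise. Applying the partition relation yields either a set $K_0$ of size $\aleph_0$ that is homogeneous of color $1$, or a set $K_1$ of size $\lambda$ homogeneous of color $0$.

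The color-$1$ case is excluded exactly as in the proof of Theorem \ref{thm: comb}: the subfamily $\{x_\alpha\}_{\alpha \in K_0}$ is itself a normalized weakly null order-type-$\omega$ sequence, so by the finite-dimensional extraction result from \cite{DLT} it contains, for the given $n$ and $\varepsilon$, a $1+\varepsilon$-suppression unconditional subsequence of length $n$; that $n$-set would receive color $0$, contradicting $c([K_0]^n) = \{1\}$. Hence we must be in the color-$0$ case, obtaining $K_1 \subset \kappa$ with $|K_1| = \lambda$ and $c([K_1]^n) = \{0\}$, i.e.\ \emph{every} $n$-element subsequence of $\{x_\alpha\}_{\alpha \in K_1}$ is $1+\varepsilon$-suppression unconditional. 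This already delivers the stated partial unconditionality property, and after relabeling by the order type of $K_1$ (discarding a final segment if necessary) the length is exactly $\lambda$.

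It remains to secure the monotonicity of (a further subsequence of) $\{x_\alpha\}_{\alpha \in K_1}$. Since $K_1$ is still weakly null, I would invoke the relevant extraction from \cite{Tal2} (as already used in the proof of Theorem 2.2), which produces from a weakly null, hence strongly dispersed, transfinite sequence a subsequence of full length $\lambda$ that is a monotone basic sequence. The key subtlety is that thinning $K_1$ to achieve monotonicity must not destroy the $n$-wise unconditionality: this is automatic, because the property $c([K_1]^n) = \{0\}$ is \emph{hereditary} under passing to subsets — any length-$n$ subsequence of the monotone subsequence is already a length-$n$ subsequence of $\{x_\alpha\}_{\alpha \in K_1}$, hence still $1+\varepsilon$-suppression unconditional.

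The main obstacle I anticipate is precisely this compatibility: ensuring that the monotone-basic-sequence extraction and the partition-theoretic extraction can be performed in a single coherent subsequence rather than pulling in conflicting directions. The hereditary nature of color-$0$ sets resolves it cleanly, provided one performs the monotonicity thinning \emph{after} fixing $K_1$, so that the order preserved is a suborder of $K_1$. One minor point to verify carefully is that the monotone extraction from \cite{Tal2} indeed returns a subsequence of full length $\lambda$ (not merely $\kappa$); here the regularity assumptions on the cardinals and the argument already employed in Theorem 2.2 should suffice, though one may need to intersect with $K_1$ at each recursion step to keep the indices inside the unconditional set.
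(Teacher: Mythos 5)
Your Ramsey half is correct and is exactly the paper's argument: color the $n$-element subsets by $1+\varepsilon$-suppression unconditionality, rule out the countable homogeneous alternative by applying the finite extraction result of \cite{DLT} to the (still weakly null) countable subfamily, and keep the $\lambda$-sized homogeneous set (the paper calls it $L$); the observation that $n$-wise unconditionality is hereditary under further thinning is also the right one. The genuine gap is in your monotonicity step. You delegate it to the extraction ``as already used in the proof of Theorem 2.2'' / \cite{Tal2}, but that machinery carries hypotheses which Theorem \ref{thm: combinatorial_b} does not grant: Theorem 2.2 assumes that $\X$ has Corson's property (C), and the results of \cite{Tal2} concern sequences of uncountable \emph{regular} length. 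Here $\X$ is an arbitrary Banach space and $\kappa,\lambda$ are arbitrary infinite cardinals --- the remark immediately following the theorem stresses that neither need even have uncountable cofinality --- and your closing sentence (``the regularity assumptions on the cardinals \ldots should suffice'') appeals to an assumption that is simply not in the statement. The implication weakly null $\Rightarrow$ $\SD$ is true, but $\SD$ alone does not produce monotone basic subsequences in a general Banach space; property (C) is precisely what powers that step in Theorem 2.2, so the chain ``weakly null, hence strongly dispersed, hence extract via \cite{Tal2}'' breaks at its last link.

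The repair --- and this is what the paper actually does --- is a direct transfinite Mazur-type recursion inside $L$ that uses only the strong ($c_0(\lambda)$) form of weak nullness. Pick $\pi(0)\in L$. At stage $\gamma<\lambda$, choose a $1$-norming family $\mathcal{F}_\gamma\subset \X^*$ for $[x_{\pi(\theta)}\colon \theta<\gamma]$ with $|\mathcal{F}_\gamma| = |\gamma|\vee\omega$, arranged to be nested over $\gamma$. Since each $f\in\X^*$ satisfies $\{f(x_\alpha)\}_{\alpha<\kappa}\in c_0(\kappa)$, each $f$ annihilates all but countably many $x_\alpha$, so at most $|\gamma|\vee\omega<\lambda$ indices of $L$ fail the requirement $f(x_\alpha)=0$ for all $f\in\mathcal{F}_\gamma$; pick $x_{\pi(\gamma)}$ from the (nonempty) remainder of $L$. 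Nestedness of the families makes every initial-segment projection contractive, the cardinality count shows the recursion runs for $\lambda$ stages (valid even for $\lambda$ singular or $\lambda=\omega$), and the $n$-wise suppression unconditionality survives by the hereditarity you already noted. With this substitution for your second step, your proof becomes the paper's proof of Theorem \ref{thm: combinatorial_b}.
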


For example, according to the Erd\"{o}s-Rado theorem one may set 
$\kappa= \beth_{n-1}^+$ and $\lambda = \aleph_1$ above, 
see \cite[p.100]{EMHR} (but in general neither $\kappa$ nor $\lambda$ are required to have uncountable cofinality).

\begin{proof}[Sketch of Proof of Theorem \ref{thm: combinatorial_b}]
Note that the order of the sequence sought after is only relevant with regard to the monotonicity of the 
basic sequence, since $1+\varepsilon$-suppression unconditionality is defined irrespective of the ordering. 
First we prove the existence of a suitable subset of $L\subset \kappa$, $|L|=\lambda$, such that each finite subsequence of $\{x_\alpha \}_{\alpha\in L}$ of length $n$ is $1+\varepsilon$-suppression unconditional. 
The argument for this step is a straight-forward adaptation of the proof of Theorem \ref{thm: comb}, and 
we may assume without loss of generality that $L$ has order type $\lambda$. 

To arrange a monotone basic subsequence $\{x_{\pi(\theta)}\}_{\theta<\lambda}$ we use transfinite 
recursion. Pick $\pi(0)\in L$. Suppose that we have defined such a sequence up to 
$\{x_{\pi(\theta)}\}_{\theta<\gamma}$, $\gamma<\lambda$. Let $\mathcal{F}\subset \X^*$, 
$|\mathcal{F}|=|\gamma|\vee \omega$, be a $1$-norming subset for 
$[x_{\pi(\theta)}\colon \theta<\gamma]$. Then, using the weak null assumption and $|\gamma|<\lambda$, we observe that
\[\{x_\alpha\colon \alpha \in L,\ \forall f\in \mathcal{F}\ (f(x_\alpha )=0)\}\neq \emptyset\]
and we select $x_{\pi(\gamma)}$ from this set. By using simple cardinal arithmetic we see that this process will not terminate before $\lambda$ many steps. Finally, similarly as above, we may pass on to a subsequence of the required order type.
\end{proof}

\section{Games and freeness}
We are frequently interested here in subspaces of Banach spaces $1$-normed by subspaces of the dual, so let us fix the following notation: Let $\Psi \colon 2^{\X^* } \to 2^{\X}$ be
\begin{equation}\label{eq: Psi}
\Psi(U)=\left\{x\in \X\colon \sup_{f\in U\setminus \{0\}}\frac{|f(x)|}{\|f\|}=\|x\|\right\},\quad \Psi(\emptyset)=\Psi(\{0\})=\{0\}.
\end{equation}
Note that $\Psi(E) \subset \X$ is seldom a linear subspace even if $E \subset \X^*$ is such.
If $\X$ is a uniformly convex space and $E=P^* (\X^* )$ where $P \colon \X \to \Y$ is a linear
norm-$1$ projection onto, then $\Psi(E)=\Y$ but in general the (pre)duality mapping is far away from being convex (in fact the convexity of the duality mapping characterizes Hilbert spaces \cite{talponen_directional}).

However, $\Psi(E) \subset \X$ may easily contain closed subspaces. Thus, we are interested here in kind of \emph{spaceability} properties of the functor $\Psi$, cf. \cite{spaceable}. Further on, if $E$ is large (in some sense) then it seems reasonable to expect to find large closed subspaces in $\Psi(E)$. Here the largeness of the 
subspaces will be understood in the spirit of descriptive set theory, i.e. involving Baire category type considerations or winning strategies of topological games. 
It is a rather natural idea to use suitable games to analyze unconditional structures of Banach spaces, 
see e.g. \cite{JohnsonZ2}, \cite{LoTo2}, \cite{rosendal}, \cite{odell}, \cite{pelczar}.
Owing to ideas in Gowers' dichotomy (see \cite{Gowers} cf. \cite{maurey}) we first define a topological $1$-player game. The player chooses successively pairs; at stage $\alpha<\kappa$ she chooses a pair $(x_\alpha, Z_\alpha )$ where $x_\alpha \in \S_\X$ and $Z_\alpha \subset \X$ is a closed linear subspace. There are some rules; the player must 
\begin{enumerate}
\item{choose $Z_\beta \subset Z_\alpha$, $\alpha<\beta$,} 
\item{choose $x_\alpha \in \bigcap_{\gamma<\alpha} Z_\gamma$,}
\item{satisfy $[x_\gamma\colon \gamma<\alpha]+Z_\alpha \subset \Psi([x_\alpha]^\bot )$.}
\end{enumerate}
If the player cannot make an inning at a stage $\alpha<\kappa$ then the game ends, otherwise the game ends after $\kappa$-many stages. If the player can play $\kappa$-many innings, then she wins, otherwise she loses.

\begin{proposition}\label{prop:  single}
A Banach space $\X$ admits a $1$-suppression unconditional basic sequence of length $\kappa$ if and only 
if the player has a winning strategy on $\X$.
\end{proposition}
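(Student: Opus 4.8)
**The plan is to prove both implications directly, translating the combinatorial definition of a $1$-suppression unconditional basic sequence into the dynamics of the game.**

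For the forward direction, suppose the player has a winning strategy and let her play out a full run, producing a sequence $\{x_\alpha\}_{\alpha<\kappa}\subset\S_\X$ together with the nested subspaces $\{Z_\alpha\}_{\alpha<\kappa}$. I would argue that $\{x_\alpha\}_{\alpha<\kappa}$ is $1$-suppression unconditional. The key is rule (3): at each stage $\alpha$ we have $[x_\gamma\colon\gamma<\alpha]+Z_\alpha\subset\Psi([x_\alpha]^\bot)$, and by rule (2) all later vectors $x_\beta$ ($\beta>\alpha$) lie in $\bigcap_{\gamma<\beta}Z_\gamma\subset Z_\alpha$. Given finite sets $A\subset B\subset\kappa$, I would isolate one index $\alpha\in B\setminus A$ (when $A=B$ there is nothing to prove) and show, via the membership in $\Psi([x_\alpha]^\bot)$, that deleting the $\alpha$-th term does not increase the norm: since any vector in $\Psi([x_\alpha]^\bot)$ attains its norm on a functional annihilating $x_\alpha$, the projection killing the $x_\alpha$-coordinate is norm-one on the relevant subspace. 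Iterating this coordinate-removal argument over the finitely many indices of $B\setminus A$ yields $\|\sum_{\alpha\in A}c_\alpha x_\alpha\|\le\|\sum_{\alpha\in B}c_\alpha x_\alpha\|$, which is exactly $1$-suppression unconditionality with $C=1$.

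For the converse, suppose $\X$ contains a $1$-suppression unconditional basic sequence $\{e_\alpha\}_{\alpha<\kappa}$. I would describe an explicit strategy: at stage $\alpha$ the player plays $x_\alpha=e_\alpha$ and sets $Z_\alpha=[e_\beta\colon\beta\ge\alpha]$ (the closed tail span), verifying that this choice satisfies all three rules. Rules (1) and (2) are immediate from the nesting of tail spans. For rule (3), I would check that $[e_\gamma\colon\gamma<\alpha]+Z_\alpha=[e_\beta\colon\beta<\kappa]$ sits inside $\Psi([e_\alpha]^\bot)$: this is precisely where $1$-suppression unconditionality is used, since for any finitely supported vector $v=\sum c_\beta e_\beta$ the suppression inequality with $A=\mathrm{supp}(v)\setminus\{\alpha\}$ and $B=\mathrm{supp}(v)$ forces the norm of $v$ to be witnessed by a functional vanishing on $e_\alpha$, giving $v\in\Psi([e_\alpha]^\bot)$; the general (non-finitely-supported) case then follows by a density and limiting argument using the basis projections. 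Since the player can keep playing for all $\kappa$ stages, she wins.

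\emph{The main obstacle} I anticipate is the careful verification of rule (3) in both directions, specifically the passage from the finitary suppression inequality to the geometric statement about $\Psi([x_\alpha]^\bot)$, which involves the non-convex (pre)duality functor $\Psi$ defined in \eqref{eq: Psi}. One must confirm that membership in $\Psi([e_\alpha]^\bot)$ for a vector $v$ is genuinely equivalent to the existence of a norm-attaining functional in $[e_\alpha]^\bot$, and that $1$-suppression unconditionality supplies exactly such a functional; the subtlety is that $\Psi$ does not respect linear structure, so the argument must be made pointwise on vectors rather than on subspaces, and the closure/limiting step for infinitely supported elements requires the uniform norm-one bound on the basis projections guaranteed by monotonicity implicit in the $C=1$ suppression constant.
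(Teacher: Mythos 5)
Your forward direction is sound, and it is essentially the paper's own argument: the paper's entire proof consists of the observation that $1$-suppression unconditionality of $(x_\alpha)_{\alpha<\mu}$ is equivalent to $[x_\beta]^\bot$ $1$-norming $[x_\alpha\colon \alpha\neq\beta]$ for every $\beta<\mu$, which is exactly what your coordinate-removal step exploits. One small correction there: membership in $\Psi([x_\alpha]^\bot)$ as defined in \eqref{eq: Psi} asserts only that the supremum over normalized functionals of $[x_\alpha]^\bot$ equals the norm, not that it is attained; so your removal step should use functionals $f\in[x_\alpha]^\bot$, $\|f\|=1$, with $|f(v)|\geq\|v\|-\varepsilon$, and let $\varepsilon\to0$. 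This costs nothing.

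The converse direction, however, contains a genuine error: the strategy you describe is illegal. With $Z_\alpha=[e_\beta\colon\beta\geq\alpha]$ you have $e_\alpha\in Z_\alpha$, so rule (3) would force $e_\alpha\in\Psi([e_\alpha]^\bot)$; but every $f\in[e_\alpha]^\bot$ vanishes at $e_\alpha$, so the supremum in \eqref{eq: Psi} is $0\neq1=\|e_\alpha\|$. In particular your claimed inclusion $[e_\gamma\colon\gamma<\alpha]+Z_\alpha=[e_\beta\colon\beta<\kappa]\subset\Psi([e_\alpha]^\bot)$ is false: no nonzero multiple of $e_\alpha$ can lie in $\Psi([e_\alpha]^\bot)$, so the game would terminate at the very first inning. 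The fix is to play the strict tail $Z_\alpha=[e_\beta\colon\beta>\alpha]$; rules (1)--(2) still hold, and for rule (3) one only needs $[e_\gamma\colon\gamma<\alpha]+Z_\alpha\subset[e_\beta\colon\beta\neq\alpha]\subset\Psi([e_\alpha]^\bot)$. The clean way to see the last inclusion---which also disposes of the norm-attainment ``obstacle'' you flag at the end---is to note that $[e_\alpha]^\bot$ is isometrically the dual of $\X/[e_\alpha]$, so for any $v$,
\[
\sup_{f\in[e_\alpha]^\bot\setminus\{0\}}\frac{|f(v)|}{\|f\|}=\dist\left(v,[e_\alpha]\right),
\]
and hence $v\in\Psi([e_\alpha]^\bot)$ if and only if $\|v+ce_\alpha\|\geq\|v\|$ for all scalars $c$. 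For finitely supported $v$ with $\alpha\notin\mathrm{supp}(v)$ this is exactly the suppression inequality, and since both sides of the displayed identity are continuous in $v$, the set $\Psi([e_\alpha]^\bot)$ is closed and the inclusion passes to the closed span. Thus the question of supremum versus attainment never arises; the equivalence you propose to verify (membership in $\Psi([e_\alpha]^\bot)$ versus existence of a norm-attaining annihilating functional) is false in general, but it is also not needed.
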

\begin{proof}
The main point is that $1$-suppression unconditionality of a sequence $(x_\alpha )_{\alpha<\mu}$ 
is characterized by the condition that $[x_\beta ]^\bot$ $1$-norms 
$[x_\alpha \colon \alpha\neq \beta]$ for any $\beta<\mu$.
\end{proof}

\newcommand{\PI}{\mathrm{P}_\mathrm{I}}
\renewcommand{\PII}{\mathrm{P}_\mathrm{II}}
\newcommand{\G}{\mathrm{G}}

The above game can be adapted to a two-player topological game $\G_{\X,\kappa}$ where the above player 
has the role of player $\mathrm{I}$ ($\PI$), player $\mathrm{II}$ ($\PII$, the spoiler)  simultaneously chooses points $y_\alpha \in \X$ at each stage $\alpha$ and the rule 
$(3)$ is replaced with
\begin{enumerate}
\item[($3^\prime$)]{$[x_\gamma,y_\gamma \colon \gamma<\alpha]+Z_\alpha \subset \Psi([x_\alpha]^\bot )$,}
\end{enumerate}
binding the choices of $\PI$. The game terminates at a stage $\alpha<\kappa$ if $\PI$ cannot make an inning
(following the rules).
Player $\PII$ wins if the game terminates at a stage $\alpha< \kappa$, otherwise $\PI$ wins. A strategy 
for $\PI$ is a mapping $s\colon (\{x_\alpha\}_{\alpha<\theta}, \{y_\alpha\}_{\alpha<\theta},
\{U_\alpha\}_{\alpha<\theta}) \mapsto (x_\theta ,U_\theta )$ consistent with the above rules.
We denote by $\PI \uparrow\G_{\X,\kappa}$ the fact that $\PI$ has a winning strategy in $G_{\X,\kappa}$.

Let us say that $x_\mu \in \S_\X$ is \emph{compatible} with a sequence 
$\{x_\alpha \}_{\alpha<\mu}\subset \X$ if 
\[[x_\alpha \colon \alpha<\mu ] \subset \Psi([x_\mu ]^\bot ) .\]
The possibility of compatible extensions in recursive constructions is essentially equivalent to the existence of a monotone basic sequence, a necessary but not by far a sufficient condition in the construction of 
unconditional basic sequences. 

\begin{lemma}\label{prop: compatible}
Suppose that $\X$ satisfies $\wst\dash\dens (\X^*)=\dens (\X)=\kappa$ and whenever $z_\mu$ is compatible with $\{z_\alpha \}_{\alpha<\mu}$, $\mu\!<\!\kappa$, then there is a closed linear space $\Z \subset \Psi([z_\mu ]^\bot )$ such that $\{z_\alpha \}_{\alpha<\mu} \subset \Z$ and $\codens(\Z)\leq |\mu|$. Then $\PI \uparrow\G_{\X,\kappa}$. This in turn implies that $\X$ has a $1$-suppression unconditional basic sequence of length $\kappa$.
\end{lemma}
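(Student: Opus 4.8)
The plan is to exhibit an explicit winning strategy for $\PI$ by transfinite recursion on the stage $\mu<\kappa$, maintaining throughout a $\wst$-closed \emph{accumulated annihilator} $\mathcal{D}_\mu\subset\X^*$ with $\wst\dash\dens(\mathcal{D}_\mu)\le|\mu|\vee\omega$, increasing in $\mu$, and letting $\PI$'s subspace be the pre-annihilator $Z_\mu={}^\perp\mathcal{D}_\mu$. Because $\mathcal{D}_\mu$ increases, the $Z_\mu$ automatically decrease, so rule $(1)$ holds by fiat; the substance is to choose, at each stage, a normalized $x_\mu$ lying in the current workspace $\bigcap_{\gamma<\mu}Z_\gamma$ and compatible with everything played so far, and then to update $\mathcal{D}_\mu$ so that rule $(3^\prime)$ is met.

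For the compatibility step I would first fix, given the span $V=[x_\gamma,y_\gamma\colon\gamma<\mu]$ of all previous points of both players (of density $\le|\mu|\vee\omega<\kappa$), a $1$-norming family $\mathcal{G}_\mu\subset\X^*$ for $V$ with $|\mathcal{G}_\mu|\le|\mu|\vee\omega$, obtained by Hahn--Banach from a dense subset of $V$. If I then pick $x_\mu\in{}^\perp(\mathcal{D}_\mu^\prime\cup\mathcal{G}_\mu)\cap\S_\X$, where $\mathcal{D}_\mu^\prime:=\overline{\bigcup_{\gamma<\mu}\mathcal{D}_\gamma}^{\wst}$, the inclusion $\mathcal{G}_\mu\subset[x_\mu]^\bot$ forces $[x_\mu]^\bot$ to $1$-norm $V$, i.e. $x_\mu$ is compatible with $\{x_\gamma,y_\gamma\}_{\gamma<\mu}$, while $x_\mu\in{}^\perp\mathcal{D}_\mu^\prime=\bigcap_{\gamma<\mu}Z_\gamma$ gives rule $(2)$. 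With this compatible $x_\mu$ in hand the hypothesis of the lemma supplies a closed subspace $\Z\subset\Psi([x_\mu]^\bot)$ containing all the previous points with $\codens(\Z)\le|\mu|$; setting $\mathcal{D}_\mu:=\overline{[\mathcal{D}_\mu^\prime\cup\Z^\perp]}^{\wst}$ makes $Z_\mu={}^\perp\mathcal{D}_\mu\subset\Z$, whence $[x_\gamma,y_\gamma\colon\gamma<\mu]+Z_\mu\subset\Z\subset\Psi([x_\mu]^\bot)$ and rule $(3^\prime)$ holds.

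The hard part is making sure the recursion \emph{never halts before stage} $\kappa$, and here the naive bookkeeping fails: in a general Banach space the codensity of a transfinite intersection $\bigcap_{\gamma<\delta}Z_\gamma$ is not controlled by the codensities of the $Z_\gamma$, so one cannot keep the workspaces of small codensity through limit stages. The device that saves the argument is to track everything on the dual side. Since $\codens(\Z)\le|\mu|$ gives $\wst\dash\dens(\Z^\perp)\le\dens(\X\mod\Z)\le|\mu|$ (as $\Z^\perp\cong(\X\mod\Z)^*$ and its relative $\wst$ topology is the one induced from $\X/\Z$), and $\wst$-density is subadditive over unions, the invariant $\wst\dash\dens(\mathcal{D}_\mu)\le|\mu|\vee\omega$ propagates; at a limit $\mu<\kappa$ the regularity of $\kappa$ keeps $\wst\dash\dens(\mathcal{D}_\mu^\prime)\le\sum_{\gamma<\mu}(|\gamma|\vee\omega)<\kappa$. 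The point is that I only need each workspace to be \emph{nonzero}, not of small codensity: because $\wst\dash\dens(\X^*)=\kappa$, no family of fewer than $\kappa$ functionals can be total, so any $\mathcal{D}$ with $\wst\dash\dens(\mathcal{D})<\kappa$ is a proper $\wst$-closed subspace and ${}^\perp\mathcal{D}\neq\{0\}$. Thus at every stage $\mu<\kappa$ the set ${}^\perp(\mathcal{D}_\mu^\prime\cup\mathcal{G}_\mu)$ is nontrivial, $\PI$ can move, and she plays all $\kappa$ innings; that is, $\PI\uparrow\G_{\X,\kappa}$.

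Finally, to deduce the $1$-suppression unconditional basic sequence I would let $\PII$ play the trivial innings $y_\alpha=0$; $\PI$'s winning strategy then produces a legal play of the one-player game (rule $(3^\prime)$ collapses to the rule $(3)$ of the single-player game), so that player has a winning strategy, and Proposition \ref{prop: single} yields a $1$-suppression unconditional basic sequence of length $\kappa$. Equivalently, one reads off directly from rules $(1)$--$(3^\prime)$ that $[x_\beta]^\bot$ $1$-norms $[x_\alpha\colon\alpha\neq\beta]$ for every $\beta<\kappa$ --- the past via rule $(3^\prime)$ at stage $\beta$ and the future via $x_\alpha\in Z_\beta$ for $\alpha>\beta$ --- which is exactly the stated characterization of $1$-suppression unconditionality.
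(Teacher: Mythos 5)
Your proposal is correct and follows essentially the same route as the paper's proof: compatible extension points are found in the (nonzero, by the assumption $\wst\dash\dens(\X^*)=\kappa$) pre-annihilator of an accumulated small dual family together with a $1$-norming set, the lemma's hypothesis then supplies the small-codensity subspace used for $\PI$'s inning, and the final claim is reduced to Proposition \ref{prop:  single} exactly as in the paper. Your explicit dual-side bookkeeping with the subspaces $\mathcal{D}_\mu$ is just a more detailed rendering of the paper's non-termination argument, namely that $\overline{\bigcup_{\alpha<\mu}\Z_\alpha^\bot}^{\wst}=\X^*$ would contradict the weak-star density assumption since $\wst\dash\dens(\Z_\alpha^\bot)\leq\codens(\Z_\alpha)$.
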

\begin{proof}
The extension from a compatible case with transfinite recursion produces the required points $z_\mu$ 
where we collect $x_\gamma$, $y_\gamma$ specified in the game to obtain $z_\alpha$,  $\alpha<\mu$. 
Here $[x_\gamma , y_\gamma \colon \gamma<\mu]$ is $1$-normed by a suitable set 
\[\mathcal{F}_\mu \subset \X^* \]
with $|\mathcal{F}_\mu |\leq |\mu|$. Then 
\[\bigcap_{\alpha<\mu} \bigcap_{f\in \mathcal{F}_\alpha} \ker f \neq \{0\}\]
by the weak-star density assumption and this intersection consists of compatible points, so that the recursion 
proceeds for successor ordinals.

We claim that the recursion does not terminate before $\kappa$, i.e. the subspaces $\Z_\alpha$ produced in the
recursion satisfy $\bigcap_{\alpha<\mu} \Z_\alpha \neq \{0\}$ for $\mu<\kappa$. Indeed, assume to the 
contrary, then $\overline{\bigcup_{\alpha<\mu} \Z_{\alpha}^\bot }^{\wst} = \X^*$, contradicting the 
weak-star density assumption of the dual, since $\wst \dash \dens (\Z_{\alpha}^\bot)\leq |\mu|$, $\alpha<\mu$.

The latter part of the statement follows from Proposition \ref{prop:  single} since the existence of $\PI$ winning strategy in the $2$-player game is stronger than in the $1$-player version of the game.
\end{proof}

\begin{theorem}
Suppose that $\X$ is a weak Asplund space and $\PI \uparrow\G_{\X,\kappa}$. 
Then for any nested sequence of closed subspaces $\Y_\alpha \subset \X$ with 
$\codens(\Y_\alpha )<\kappa$, $\alpha<\kappa$, there is a $1$-suppression unconditional basic sequence 
$(x_\gamma )_{\gamma<\kappa}$ eventually included in any of the above subspaces.
Moreover, if $\X$ additionally has Corson's property (C) or the tightness of $(\X^* , \wst)$ is $< \kappa$, then $\wst\dash\dens (\X^*)\geq \kappa$.
\end{theorem}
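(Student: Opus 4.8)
The plan is to treat the two assertions separately, using $\PI\uparrow\G_{\X,\kappa}$ as a robust extension oracle for the first and as a source of a long suppression unconditional sequence for the second.

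For the first assertion I would not literally play $\G_{\X,\kappa}$ but rather run a transfinite recursion of length $\kappa$ whose admissibility is certified by the winning strategy. First fix a nondecreasing cofinal map $\alpha(\cdot)\colon\kappa\to\kappa$; since the $\Y_\alpha$ are nested, forcing $x_\gamma\in\Y_{\alpha(\gamma)}$ for all $\gamma$ yields a sequence eventually inside every $\Y_\alpha$, by regularity of $\kappa$. At stage $\gamma$, given a $1$-suppression unconditional $\{x_\delta\}_{\delta<\gamma}$, the characterization in Proposition \ref{prop:  single} says that an admissible extension is any $x_\gamma\in\S_\X$ with $[x_\delta\colon\delta<\gamma]\subset\Psi([x_\gamma]^\bot)$ and with $x_\gamma$ normed by $[x_\delta\colon\delta<\gamma]^\bot$. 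The point of the two-player game is that $\PI\uparrow\G_{\X,\kappa}$ should convert into the following relativised extension property: the set of admissible $x_\gamma$ remains nonempty even after intersecting the search space with a prescribed closed subspace of codensity $<\kappa$. I would obtain this as in the proof of Lemma \ref{prop: compatible}: the previously used data are $1$-normed by some $\mathcal F_\gamma\subset\X^*$ with $|\mathcal F_\gamma|\le|\gamma|$, the candidate set $\bigcap_{f\in\mathcal F_\gamma}\ker f$ has codensity $<\kappa$, and the winning strategy certifies that such intersections stay nontrivial; intersecting further with $\Y_{\alpha(\gamma)}$ adds only $\codens(\Y_{\alpha(\gamma)})<\kappa$ constraints, which the winning condition for $\PI$ should absorb by the same $\wst$-density bookkeeping. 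The role of weak Asplundness is to upgrade ``monotone'' to ``$1$-suppression unconditional'': on the dense $G_\delta$ of Gâteaux smooth points the norming functional is unique, and a Baire category argument inside the closed (hence Baire) subspace $\Y_{\alpha(\gamma)}\cap\bigcap_{f\in\mathcal F_\gamma}\ker f$ produces an $x_\gamma$ whose unique norming functional annihilates $[x_\delta\colon\delta<\gamma]$, which is exactly the reverse-norming needed for suppression unconditionality.

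For the moreover assertion I would first invoke Proposition \ref{prop:  single} to turn $\PI\uparrow\G_{\X,\kappa}$ into a $1$-suppression unconditional basic sequence $\{x_\gamma\}_{\gamma<\kappa}$ with biorthogonal functionals $\{f_\gamma\}_{\gamma<\kappa}$ of uniformly bounded norm. Being a basic sequence it is $\SD$, so under Corson's property (C) the equivalence recalled in the preliminaries (valid since $\cf(\kappa)>\omega$) shows that $\{x_\gamma\}$ is weakly convergent to $0$: for every $f\in\X^*$ and $\varepsilon>0$ the set $\{\gamma\colon|f(x_\gamma)|\ge\varepsilon\}$ has size $<\kappa$. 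Now suppose toward a contradiction that $\wst\dash\dens(\X^*)=\tau<\kappa$ and fix a $\wst$-dense $D\subset\B_{\X^*}$ with $|D|=\tau$. Property (C) places each $f_\gamma\in\overline{\conv}^{\wst}(D_\gamma)$ for some countable $D_\gamma\subset D$; evaluating the $\wst$-continuous functional $x_\gamma$ and using $f_\gamma(x_\gamma)=1$ gives some $d_\gamma\in D_\gamma\subset D$ with $d_\gamma(x_\gamma)>\tfrac12$. As $|D|=\tau<\kappa=\cf(\kappa)$, a single $d\in D$ satisfies $d(x_\gamma)>\tfrac12$ for a set of $\gamma$ of size $\kappa$, contradicting weak convergence to $0$. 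Hence $\wst\dash\dens(\X^*)\ge\kappa$. When instead the tightness of $(\X^*,\wst)$ is $<\kappa$, the same scheme applies: tightness gives $f_\gamma\in\overline{D_\gamma}^{\wst}$ with $|D_\gamma|<\kappa$, the pigeonhole step goes through verbatim, and one is reduced to checking that small dual tightness likewise forces the basic sequence to be weakly convergent to $0$.

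The main obstacle is the extension step of the first assertion: extracting from the bare existence of a winning strategy for $\PI$ the \emph{relativised} extension property that admissible continuations can always be found inside the prescribed codensity-$<\kappa$ subspaces $\Y_{\alpha(\gamma)}$. The strategy is history dependent, so one cannot simply read off that it keeps $x_\gamma$ inside $\Y_{\alpha(\gamma)}$; the honest work is either to feed the membership constraint into the game as $\PII$-moves and verify that the strategy still delivers, or---more robustly---to re-derive the extension property directly through the $\wst$-density argument of Lemma \ref{prop: compatible}, now relativised to $\Y_{\alpha(\gamma)}$, and to confirm that weak Asplundness genuinely supplies the smooth extension points inside these subspaces. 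A secondary gap to close is the claim, used in the moreover part, that small $\wst$-tightness of the dual (as opposed to property (C)) already forces a length-$\kappa$ basic sequence to be weakly convergent to $0$.
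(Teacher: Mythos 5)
The gap you identify in your closing paragraph is not a technicality; it is the entire content of the first assertion, and neither of your fallback routes closes it. The paper's proof is exactly the alternative you mention but do not carry out: the subspaces $\Y_\alpha$ are fed into the game through the spoiler's innings. Concretely, $\mathrm{P}_{\mathrm{II}}$ plays points $y_\mu\in\S_\X$, $\mu<\kappa$, so that for each $\alpha<\kappa$ there is $\sigma_\alpha<\kappa$ with $\S_{\X/\Y_\alpha}\subset q_{\X\to\X/\Y_\alpha}\big(\overline{\{y_\mu\colon\mu\leq\sigma_\alpha\}}\big)$; this is possible since $\codens(\Y_\alpha)<\kappa$. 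Weak Asplundness enters here, on the \emph{spoiler's} side: by density of Gateaux smooth points and a closing-in argument one may take the $y_\mu$ smooth. By the Smulyan lemma, a point $x$ satisfies $y_\mu\in\Psi([x]^\bot)$ only if $f_\mu(x)=0$, where $f_\mu$ is the unique support functional of $y_\mu$; hence rule $(3^\prime)$ forces every later inning $x_\gamma$ of $\mathrm{P}_{\mathrm{I}}$ into $\bigcap_{\mu<\sigma_\alpha}\ker f_\mu$, and this intersection lies in $\Y_\alpha$ because the $f_\mu$, $\mu<\sigma_\alpha$, separate $\X/\Y_\alpha$. The winning strategy guarantees the game survives all $\kappa$ innings, and unconditionality comes from Proposition \ref{prop:  single}. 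Your own proposed uses of the hypotheses do not substitute for this mechanism. Rerunning the $\wst$-density bookkeeping of Lemma \ref{prop: compatible} is not possible: that lemma assumes $\wst\dash\dens(\X^*)=\dens(\X)=\kappa$ and a compatible-extension property, neither of which is among the present hypotheses, and its implication cannot simply be inverted --- here the winning strategy is the only given. And your Baire-category step is unjustified: the dense $G_\delta$ of smooth points of $\X$ need not meet the proper closed subspace $\Y_{\alpha(\gamma)}\cap\bigcap_{f\in\mathcal F_\gamma}\ker f$ at all (a dense $G_\delta$ can be disjoint from any closed set with empty interior), and even at a point of that subspace which happens to be smooth in $\X$ there is no reason its unique norming functional should annihilate $[x_\delta\colon\delta<\gamma]$. (Note also that your two stage-$\gamma$ conditions are not by themselves enough to keep the whole sequence $1$-suppression unconditional; one must additionally commit, as the game's subspaces $Z_\alpha$ do, to keeping all \emph{future} vectors inside $\Psi([x_\beta]^\bot)$ for every earlier $\beta$.)

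On the ``moreover'' part, your pigeonhole argument in the property (C) case is correct and genuinely different from the paper's: the paper argues instead that a $\wst$-dense set of size $<\kappa$ would yield a separating family of smooth-point support functionals of size $<\kappa$, whereupon $\mathrm{P}_{\mathrm{II}}$ defeats any strategy of $\mathrm{P}_{\mathrm{I}}$ by playing the corresponding smooth points, contradicting $\mathrm{P}_{\mathrm{I}}\uparrow\mathrm{G}_{\X,\kappa}$. Your route has the virtue of not reusing the game once the sequence exists, at the price of the weak-null step, which you leave open in the tightness case. That remaining gap is real but fillable: with $\Z_\nu=[x_\gamma\colon\nu<\gamma<\kappa]$, the union $\Gamma=\bigcup_{\nu<\kappa}\Z_\nu^\bot$ is an increasing union of $\wst$-closed subspaces, and it is $\wst$-dense because a basic sequence is $\SD$; if the tightness of $(\X^*,\wst)$ is $<\kappa$, every $f\in\X^*$ lies in $\overline{\Gamma_0}^{\wst}$ for some $\Gamma_0\subset\Gamma$ of size $<\kappa$, and by regularity of $\kappa$ the set $\Gamma_0$ is contained in a single $\Z_\nu^\bot$, so $f$ vanishes on a tail of the sequence; in particular the sequence converges weakly to $0$ and your pigeonhole applies verbatim. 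In sum: the ``moreover'' clause can be salvaged along your lines, but the main assertion, as proposed, is not proved.
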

\begin{proof}
Suppose that $\PII$ chooses points $y_0 , \ldots , y_\mu ,\ldots \in \S_\X$, $\mu< \kappa$, in such a way 
that for each $\alpha < \kappa$ there is $\sigma_\alpha < \kappa$ with $|\alpha|\vee \omega = |\sigma_\alpha| \vee \omega$ such that 
$\S_{\X/ \Y_{\alpha}} \subset q_{\X \to \X/ \Y_{\alpha}} ( \overline{\{y_\mu\colon \mu\leq \sigma_{\alpha}\}})$. 
We claim that the $\PI$ innings then satisfy that $x_\gamma \in \Y_\alpha$ for sufficiently large $\gamma$.

Indeed, because in a weak Asplund space the set of Gateaux smooth points is dense, we may run a closing-in argument to choose a sequence of smooth points $y_{0}' , \ldots , y_{\mu}' ,\ldots \in\S_\X$, $\mu<\kappa$, such that 
$\overline{\{y_\mu \colon \mu < \lambda\}} \subset  \overline{\{y_{\mu}' \colon \mu < \lambda\}}$ for ordinals of the form $\lambda = \lambda \omega < \kappa$ (under ordinal arithmetic). This way we may assume without loss of generality that the points $y_\mu$ are Gateaux-smooth in the first place.

By the Smulyan lemma it is easy to see that the set of points compatible with any of the points $y_{\mu}$ is in fact $\ker f_\mu$ where the functional is the unique support functional of $y_{\mu}$. Since the
functionals $f_\mu$, $\mu<\sigma_\alpha$, separate $\X/\Y_{\alpha}$, we see that 
$\bigcap_{\mu<\sigma_\alpha} \ker f_{\mu} \subset \Y_{\alpha}$. This shows that 
$x_\gamma \in \Y_{\alpha}$ for sufficiently large ordinals $\gamma <\kappa$.  
The fact that $(x_\gamma )_{\gamma<\kappa}$ resulting from the innings of $\PI$ is $1$-unconditional is essentially contained in Proposition \ref{prop:  single}. 

The statement regarding the density is seen similarly following the argument above and using Corson's property (C), which ensures that each $f \in \X^*$ is in the weak-star closed linear span of countably
many unique norm-attaining functionals corresponding to smooth points. Thus, if 
there were a weak-star dense set $\{g_\nu \colon \nu < \lambda\}\subset \X^*$, $\lambda < \kappa$, 
then there would also be a family of unique norm-attaining functionals of the same cardinality $<\kappa$ which would separate $\X$. Then $\PII$ would win the game by playing the corresponding smooth points. 
\end{proof}

In the above result we have \emph{tails} of long basic sequences included in given deficient codensity subspaces. This can be viewed as a kind of genericity of the class of long $1$-suppression unconditional basic sequences in the space.
It seems reasonable to ask, along the lines in \cite{DLT, ketonen}, what are the 
smallest cardinals $\kappa_{\mu}$ such that $\PI \uparrow\G_{\X,\mu}$ for any 
Banach space $\X$ with $\dens(\X)\geq \kappa_\mu$.

\subsection{Asymptotic freeness}
Suppose that $\Z\subset \X$, $\dens(\Z)<\kappa$, is a closed subspace and $\{\Z_\alpha \}_{\alpha<\kappa} \subset \X$ is a nested sequence of closed subspaces with 
\[\Z_\alpha \stackrel{\w}{\longrightarrow} \Z\quad \text{as}\  \alpha\to\kappa,\] 
meaning $\Z^\bot = \bigcup_{\alpha<\kappa} \Z_\alpha ^\bot$. This implies that $\bigcap_{\alpha<\kappa} \Z_\alpha =\Z$ and in fact these conditions are equivalent 
if $\X$ has property (C), see \cite{Tal2}.

Intuitively speaking, we 
would like to call $\X$ `asymptotically free' if the satisfaction relation $\Z_{\alpha}\models$ is $\alpha$-continuous at $\kappa$; the satisfaction of sentences stabilizes in the way that the verity of a first order predicate logic sentence 
\[\forall z\in \Z \ (\phi (z))\]
(in the signature of $\X$) implies that there is $\alpha<\kappa$ such that
\[\forall z\in \Z_\beta \ (\phi (z)) \quad \text{for}\ \alpha<\beta <\kappa .\]
This approach (cf. \cite[Ch.5]{Iovino}) appears 
too general here, in the context of the geometry of Banach spaces, and therefore we restrict ourselves to a simpler form.
Given a cardinal $\lambda<\kappa$ we call $\X$ $(\lambda,\kappa)$-\emph{asymptotically free} ($(\lambda,\kappa)$-a.f.) if for all $x\in \X$, $\{\Z_\alpha\}_{\alpha <\kappa}$, and $\Z$ as above with $\dens(\Z)\leq \lambda$ we have
\[\forall z\in \Z\  (\|x+z\|\geq \|z\|)\]
implies that already for some $\alpha<\kappa$ it holds that
\[\forall z\in \Z_\alpha \ (\|x+z\|\geq \|z\|) .\]

Note that a space $\X$ with $\dens(\X) < \kappa$ is trivially $(\lambda,\kappa)$-a.f. and that this 
property is inherited by closed subspaces.
Let us call $\X$ simply $\kappa$-a.f. if it is $(\lambda , \kappa)$-a.f. for all $\lambda<\kappa$.
Also, straight-forward isomorphic generalization of this notion is possible, i.e. $(\lambda,\kappa)$-a.f. up to a renorming. There is a geometric type condition which guarantees asymptotic freeness. 
If  the following always holds with the above notations, then it is easy to see that the space is $(\lambda,\kappa)$-a.f.:
\[d((\S_{\Z_\alpha }+[x])\cap \B_\X , \S_{\X}) \to d((\S_{\Z}+[x])\cap \B_\X , \S_{\X}),
\quad \alpha\to \kappa .\]
Here $d$ is the non-symmetric Hausdorff distance.
\begin{theorem}\label{thm: spread}
A Banach space $\X$, $\dens(\X)=\kappa$, is $\kappa$-a.f. if it embeds linearly isometrically into a space $\mathrm{W}$ with a modular unconditional basis. Moreover, if $2^{\lambda} < \kappa\leq \mu$ then $\ell_{\lambda}^\infty (\mu)$ is $(\lambda,\kappa)$-a.f.
\end{theorem}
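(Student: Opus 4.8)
The plan is to recast $(\lambda,\kappa)$-asymptotic freeness through a monotone ``defect'' quantity and then to lean on the regularity of $\kappa$. Fix $x$, the nested $\{\Z_\alpha\}$, and $\Z$ as in the definition, and set $D(\alpha)=d((\S_{\Z_\alpha}+[x])\cap\B_\X,\S_\X)$ and $D_\infty=d((\S_\Z+[x])\cap\B_\X,\S_\X)$. First I would record that for $a\in\B_\X$ one has $\dist(a,\S_\X)=1-\|a\|$, whence $D(\alpha)=1-\inf\{\|u+tx\|:u\in\S_{\Z_\alpha},\,t\in\R\}$ (the ball constraint is inactive at the infimum, realized already at $t=0$), and that $D(\alpha)=0$ holds exactly when $\|z+tx\|\ge\|z\|$ for all $z\in\Z_\alpha$ and $t$, i.e. precisely the clause ``$\forall z\in\Z_\alpha\,(\|x+z\|\ge\|z\|)$'' in the definition of a.f. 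Since $\S_\Z\subset\S_{\Z_\alpha}$ and the spheres decrease with $\alpha$, $D$ is non-increasing with $D(\alpha)\ge D_\infty$. A non-increasing map $\kappa\to\R$ with $\kappa$ regular uncountable is eventually constant (a strictly decreasing transfinite sequence of reals is countable), so it suffices to show the eventual value of $D$ equals $D_\infty$; granting $D_\infty=0$ from the a.f. premise, this forces $D(\alpha)=0$ for all large $\alpha$. This is exactly the convergence condition flagged before the theorem, so the whole problem reduces to establishing $D(\alpha)\to D_\infty$.

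For the embedding statement I would localize everything to the countable support of $x$. Writing $\X\subset\mathrm{W}$ isometrically with modular unconditional (hence $1$-unconditional) basis $\{e_i\}$, unconditional convergence forces $S:=\mathrm{supp}(x)$ to be countable, so $\mathrm{W}_S:=\overline{[e_i:i\in S]}$ is separable. The modular law yields, for $u$ supported in $S$ and $w$ off $S$, that $\|u\|\le\|v\|\Rightarrow\|u+w\|\le\|v+w\|$; hence $\|u+w\|=\phi_w(\|u\|)$ for a non-decreasing $1$-Lipschitz $\phi_w$ with $\phi_w(0)=\|w\|$. Splitting $z=P_S z+z''$ (with $z''=z-P_S z$ off $S$) thereby reduces each comparison of $\|x+z\|$ with $\|z\|$ to one of $\|x+P_S z\|$ and $\|P_S z\|$ inside $\mathrm{W}_S$. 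The two chains of closed subspaces $\Z_\alpha\cap\mathrm{W}_S$ and $\overline{P_S\Z_\alpha}$ of the separable space $\mathrm{W}_S$ are non-increasing, and a non-increasing $\kappa$-chain of closed subspaces of a separable space stabilizes (an uncountable strictly decreasing chain would yield an uncountable $1$-separated set). The first stabilizes automatically to $(\bigcap_\alpha\Z_\alpha)\cap\mathrm{W}_S=\Z\cap\mathrm{W}_S$; for the second I would compute $\overline{P_S\Z_\alpha}^{\,\bot}=\{g\in\mathrm{W}_S^*:P_S^*g\in\Z_\alpha^\bot\}$ and invoke the hypothesis $\Z^\bot=\bigcup_\alpha\Z_\alpha^\bot$ (equivalently $\Z_\alpha\stackrel{\w}{\to}\Z$) to identify the stable value as $\overline{P_S\Z}$.

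With these stabilizations in hand I would show $D(\alpha)$ is squeezed between quantities depending only on $x$ together with $\Z_\alpha\cap\mathrm{W}_S$ and $\overline{P_S\Z_\alpha}$: since $\min_t\|u+tx\|=\phi_{u''}(\dist(P_S u,[x]))$, a near-optimal unit vector $u$ for $D(\alpha)$ must have small tail (indeed $\|u''\|=\phi_{u''}(0)\le\|u+tx\|\approx 1-D(\alpha)$), so only ``$S$-dominant'' directions contribute and the steep regime of $\phi_{u''}$ is in force. Feeding the stable values $\Z\cap\mathrm{W}_S$ and $\overline{P_S\Z}$ back in should then give $D(\alpha)=D_\infty$ for large $\alpha$. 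I expect the genuine obstacle to lie precisely in this transfer of strictness: a strict norm drop of $P_S z$ need not survive re‑adding a \emph{dominant} tail $z''$ (the flat part of $\phi$), and, dually, when extracting a witness of $D_\infty>0$ inside the limit space $\Z$ I must prevent the $S$-part from collapsing to $0$ under the limit. I would handle this by selecting near-optimal witnesses with controlled small tails, transporting their $S$-parts through the stabilized separable subspace, and taking a weak$^*$ cluster point in $\mathrm{W}_S$ kept non‑degenerate by the lower bound $\|P_S u\|\ge D(\alpha)-\varepsilon$ forced by $1$-Lipschitzness of $\phi_{u''}$.

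Finally, for $\ell_\lambda^\infty(\mu)$ I would run the same scheme with the sup-norm lattice playing the role of the basis: a failure of $\|x+z\|_\infty\ge\|z\|_\infty$ is always witnessed on coordinates lying in $\mathrm{supp}(x)$, a set of size $\le\lambda$, so the relevant projected data lives in $\ell^\infty(\mathrm{supp}(x))$, whose density is at most $2^\lambda$. Here the part played by separability above is played by the assumption $2^\lambda<\kappa$: a non-increasing $\kappa$-chain of closed subspaces of a space of density $<\kappa$ still stabilizes, and, more concretely, the at most $2^\lambda$ possible ``dominant-coordinate/sign patterns'' of the witnesses are pigeonholed by the regular cardinal $\kappa$ into a cofinal family sharing a single pattern, which then stabilizes cleanly and furnishes the required $\alpha$.
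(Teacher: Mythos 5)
Your reduction localizes to $S=\mathrm{supp}(x)$ only, and this is where the proposal breaks: the obstacle you yourself flag (the flat part of $\phi_w$) is not a technical wrinkle to be handled by cluster points, it is a fatal defect of any reduction that omits the supports of $\Z$. Concretely, take $\mathrm{W}=c_0$ (its unit vector basis is modular unconditional), $x=e_0$, $\Z=[e_1-e_0]$. Then $\|x+z\|_\infty\geq\|z\|_\infty$ for every $z\in\Z$, yet $\overline{P_S\Z}=[e_0]$ contains $v=-e_0$ with $\|x+v\|=0<1=\|v\|$. So the a.f.\ hypothesis on $\Z$ simply does not pass to $\overline{P_S\Z}$, and in the converse direction the strict drop your witnesses $P_Su_\alpha$ exhibit inside $\overline{P_S\Z}$ cannot be lifted back to a violation in $\Z$: any lift $w\in\Z$ with $P_Sw\approx P_Su_\alpha$ re-acquires a tail $w''$, and when that tail is dominant one gets $\|x+w\|=\|w\|$ (the flat regime), so no contradiction with the hypothesis is produced. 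The weak$^*$ cluster point device does not touch this, because the problem is not degeneration of the $S$-part but the impossibility of converting statements about $\overline{P_S\Z}$ into statements about $\Z$. The same defect recurs verbatim in your $\ell_{\lambda}^\infty(\mu)$ argument, where the ``dominant-coordinate pattern'' pigeonholing is in any case too vague to carry the step.

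The repair is exactly the paper's proof: take the band ideal $I$ (equivalently, the support set $S$) generated by the supports of $x$ \emph{together with a dense subset of} $\Z$. Then $\Z\subset I$, so $P_I$ fixes $\Z$ and the transfer problem evaporates. You lose separability of $\mathrm{W}_S$, but your own stabilization lemma never needed it: a nested $\kappa$-chain of closed subspaces of a space of density $<\kappa$ stabilizes when $\kappa$ is regular, and $\dens(I)\leq\dens(\Z)\vee\aleph_0<\kappa$ in the basis case (countable supports, no cardinal jump), while $\dens(I)\leq 2^{\lambda}<\kappa$ in the $\ell_{\lambda}^\infty(\mu)$ case --- this is precisely where the hypothesis $2^\lambda<\kappa$ enters. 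Your annihilator computation then identifies the stable value: $\overline{P_I\Z_\beta}=\overline{P_I\Z}=\Z$ for some $\beta<\kappa$. Now for $z\in\Z_\alpha$, $\beta<\alpha<\kappa$, one has $P_Iz\in\Z$, so the hypothesis applies to $P_Iz$ \emph{directly}, giving $\|x+P_Iz\|\geq\|P_Iz\|$; then your own modular-law observation (with $u=P_Iz$, $v=x+P_Iz$, $w=(\I-P_I)z$), respectively the lattice identity $\|a\|=\|P_Ia\|\vee\|(\I-P_I)a\|$ in the sup-norm case, yields $\|x+z\|\geq\|z\|$. Note also that your first-paragraph reduction to full convergence $D(\alpha)\to D_\infty$ asks for more than is needed: the direct argument gives exactly the implication $D_\infty=0\Rightarrow D(\alpha)=0$ eventually, which is all the definition requires.
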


For example, under GCH the space $\ell_{\omega_\alpha}^\infty (\omega_{\alpha+2})$ is $(\omega_{\alpha},\omega_{\alpha+2} )$-a.f.
Note that the first part of the statement includes the space $c_0 (\omega_1 )$, roughly corresponding to the case with $<\!\!\omega$ and $ \omega_1$ in place of $\lambda$ and $\mu=\kappa$, respectively.

One might ask, whether the role of $\lambda$ and $\mu=\kappa$ is somewhat analogous to the same symbols in the partition relation \eqref{eq: partition}. It is clear that enlarging $\lambda$ makes the $(\lambda,\kappa)$-a.f. condition stronger. However, the effect of $\kappa$ works in reverse. The latter part of the above result provides us with some analytics on the mutual relationship of $\lambda$ and $\kappa$; it suggests that large Banach spaces are inclined to be asymptotically free of entanglement in large ($\kappa$) scales.  
See also Final Remarks.

\begin{proof}[Proof of Theorem \ref{thm: spread}]
Since a.f. is a hereditary property we need only to show that $\mathrm{W}$ and 
$\ell_{\lambda}^\infty (\mu)$ have the respective a.f. properties. We will first give the argument for 
the second part of the statement. 

Fix the band ideal $I \subset  \ell_{\lambda}^\infty (\mu)$ generated by the supports of given $x\in \ell_{\lambda}^\infty (\mu)$ and of $\Z \subset \ell_{\lambda}^\infty (\mu)$, $\dens(\Z)\leq \lambda$,
as in the definition of asymptotic freeness. Let $P_I$ be the corresponding band projection.
Note that altogether $\leq\! |\dens(\Z) \times \lambda|=\lambda$ many coordinates are supported on $I$. 
Thus $I$ has density $\dens(I)\leq (\aleph_0 )^\lambda = 2^\lambda < \kappa$ 
by basic cardinal arithmetic and the assumptions. Suppose that 
$\Z_\alpha \stackrel{\w}{\longrightarrow} \Z $, $\alpha \to \kappa$, as in the definition of asymptotic freeness. This means that $\Z^\bot = \bigcup_{\alpha<\kappa} \Z_{\alpha}^\bot$. Thus, 
$P_{I}^* (\Z^\bot )= \bigcup_{\alpha<\kappa} P_{I}^* (\Z_{\alpha}^\bot )$. Hence 
$P_{I} (\Z_\alpha )\stackrel{\w}{\longrightarrow} P_{I} (\Z) $, $\alpha \to \kappa$, holds in the space $I$.
Since $\dens(I)<\kappa$ there is already $\beta <\kappa$ such that $P_{I} (\Z_\beta ) = P_{I} (\Z)$. 
Suppose that $x$ and $\Z$ satisfy $\forall z \in \Z$ $(\|x+z\| \geq \|z\|)$.
Then, by the selection of the band ideal $I$,
\[ \|P_I (x+z)\| = \|x+z\| \geq \|z\| = \|P_I (z)\|,\quad z \in \Z ,\]
and for $z\in \Z_\alpha$, $\beta < \alpha <\kappa$, we have 
\begin{equation}\label{eq: maxnorm}
\|x+z\| = \|P_I (x+z)\| \vee \|(\I-P_I ) (z)\| \geq  \|P_I (z)\| \vee \|(\I-P_I ) (z)\| 
=\|z\| .
\end{equation}
This proves the second statement of the theorem. 

The first statement is obtained as a modification of the above argument, mainly dispensing with the 
cardinal arithmetic, as the supports of vectors, in terms of the Schauder basis, are countable.
Because of the assumption $\dens(\X)=\kappa$ we may assume without loss of generality that 
the basis of $\mathrm{W}$ has length $\kappa$. Recall that the modularity of the basis implies it is $1$-suppression unconditional. One uses again a suitable band ideal $I$ and a corresponding projection $P_I$, 
which is available because the basis is unconditional. Then there is $\sigma < \kappa$ such that 
$I$ is supported on initial segment of the basis bounded by $\sigma$.
The modularity of the basis is finally applied in proving an inequality analogous to \eqref{eq: maxnorm}. 
\end{proof}

\begin{corollary}
Let $\kappa$ be a strongly inaccessible cardinal. Then $\ell_{<\kappa}^\infty (\kappa)$ is 
$\kappa$-a.f. \qed
\end{corollary}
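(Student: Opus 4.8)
The plan is to show that $\ell_{<\kappa}^\infty (\kappa)$ is $(\lambda,\kappa)$-a.f. for every $\lambda<\kappa$, which is precisely the assertion that it is $\kappa$-a.f. Since $\kappa$ is regular and uncountable we have $\cf(\kappa)=\kappa>\omega$, so the space is complete and the notions involved make sense. The whole argument will be a variant of the proof of the second part of Theorem \ref{thm: spread}; the new ingredient is that strong inaccessibility lets the relevant band ideal stay of density $<\kappa$ no matter how the supports are distributed, so rather than fixing $\lambda$ in advance I would treat an arbitrary subspace $\Z$ of density $<\kappa$ at once.

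So I would fix $x\in \ell_{<\kappa}^\infty (\kappa)$, a nested family $\{\Z_\alpha\}_{\alpha<\kappa}$, and $\Z$ with $\dens(\Z)<\kappa$ and $\Z_\alpha \stackrel{\w}{\longrightarrow}\Z$, satisfying $\|x+z\|\geq\|z\|$ for all $z\in\Z$. The first step is to bound the total support. Let $T$ be the union of $\mathrm{supp}(x)$ with the support of $\Z$ (the coordinates on which some element of $\Z$ is nonzero). Every element of $\ell_{<\kappa}^\infty (\kappa)$ has support of size $<\kappa$, and $\Z=\overline{D}$ for some $D$ with $|D|\leq\dens(\Z)<\kappa$; since the coordinate functionals are continuous, the support of $\Z$ is contained in $\bigcup_{d\in D}\mathrm{supp}(d)$. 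By regularity of $\kappa$ this union, and hence $T$, satisfies $|T|<\kappa$.

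Now comes the key step, and the only place where strong inaccessibility is genuinely needed. Let $I=\{u\colon \mathrm{supp}(u)\subseteq T\}$ be the band ideal generated by $T$, with band projection $P_I$. Since every sequence supported on $T$ automatically has support of size $<\kappa$, one has $I\cong \ell^\infty(T)$, so $\dens(I)\leq 2^{|T|}$. As $\kappa$ is a strong limit and $|T|<\kappa$, this yields $\dens(I)<\kappa$. With this density bound the remainder of the proof of Theorem \ref{thm: spread} applies essentially verbatim: from $\Z^\bot=\bigcup_{\alpha<\kappa}\Z_\alpha^\bot$ one obtains $P_I(\Z_\alpha)\stackrel{\w}{\longrightarrow} P_I(\Z)$ inside $I$, and since $\dens(I)<\kappa$ and $\kappa$ is regular there is already some $\beta<\kappa$ with $P_I(\Z_\beta)=P_I(\Z)=\Z$.

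Finally I would conclude by the sup-norm computation of \eqref{eq: maxnorm}. For $z\in\Z_\beta$ we have $P_I z\in P_I(\Z_\beta)=\Z$, and since $x\in I$,
\[\|x+z\|=\|P_I(x+z)\|\vee\|(\I-P_I)z\|=\|x+P_I z\|\vee\|(\I-P_I)z\|\geq \|P_I z\|\vee\|(\I-P_I)z\|=\|z\|,\]
where the inequality uses the hypothesis $\|x+w\|\geq\|w\|$ at $w=P_I z\in\Z$. This verifies the a.f. conclusion with $\alpha=\beta<\kappa$, and as $\lambda<\kappa$ was arbitrary the corollary follows. The main obstacle is the density estimate $\dens(I)\leq 2^{|T|}<\kappa$: without the strong-limit property one could only control $2^\lambda$ for a single fixed $\lambda$ (as in Theorem \ref{thm: spread}), whereas here the supports of $x$ and of $\Z$ may jointly fill an arbitrary set of size $<\kappa$, and it is precisely strong inaccessibility that tames the resulting exponential uniformly.
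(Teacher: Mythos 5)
Your proof is correct and follows essentially the same route the paper intends: the corollary is stated with an immediate \qed as a consequence of Theorem \ref{thm: spread}, and you reproduce exactly its band-ideal argument, with regularity of $\kappa$ bounding the joint support $T$ of $x$ and $\Z$ below $\kappa$ and the strong-limit property giving $\dens(I)\leq 2^{|T|}<\kappa$, after which the stabilization $P_I(\Z_\beta)=P_I(\Z)=\Z$ and the max-norm estimate \eqref{eq: maxnorm} go through verbatim. Your closing observation that inaccessibility (rather than a single bound $2^\lambda<\kappa$) is what tames arbitrary supports of size $<\kappa$ is precisely the point of the corollary.
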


The above notion heavily relies on the countable tightness properties of Banach space and the uncountable regular length of the sequences. Other notions of freeness in Banach spaces in the separable setting include property (u), asymptotic unconditionality and the Gordon-Lewis property 
(e.g. \cite{saab}, \cite{Cowell}, \cite{JohnsonZ}, \cite{GL}). Next we show how asymptotic freeness 
implies the genericity of long unconditional basic sequences in the space.

\begin{theorem}\label{thm: af}
Suppose that $\X$, $\dens(\X)\!= \!\kappa$, is a $\kappa$-a.f. (resp. $(\lambda,\kappa)$-a.f.) WLD space. 
Then $\PI \uparrow \G_{\X,\kappa}$ (resp. $\PI \uparrow \G_{\X,\lambda^+}$).
\end{theorem}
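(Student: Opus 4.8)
The plan is to exhibit a winning strategy for $\PI$ directly, following the recursive scheme in the proof of Lemma \ref{prop: compatible}. Since $\X$ is WLD we have $\wst\dash\dens(\X^*)=\dens(\X)=\kappa$, so the density hypothesis of that lemma is met. At stage $\mu$ of the game $\PI$ has seen her own points $x_\gamma$ and the spoiler's points $y_\gamma$, $\gamma<\mu$; write $W_\mu=[x_\gamma,y_\gamma\colon\gamma<\mu]$ and let $A_\mu=\bigcap_{\gamma<\mu}Z_\gamma$ be the current arena. I would first choose, by the weak-star density argument of Lemma \ref{prop: compatible} (a point in the intersection of the kernels of a norming family $\mathcal{F}_\mu$ of size $\leq|\mu|$ for $W_\mu$, intersected with $A_\mu$), a vector $x_\mu\in A_\mu\cap\S_\X$ that is compatible with $W_\mu$, i.e.\ $W_\mu\subset\Psi([x_\mu]^\bot)$. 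The real work is to produce a \emph{large} admissible arena $Z_\mu\subset A_\mu$ with $W_\mu+Z_\mu\subset\Psi([x_\mu]^\bot)$ (rule $(3')$), and this is where asymptotic freeness enters.

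The observation that makes a.f.\ applicable is the following scaling remark: for a linear subspace $\Z\subset\X$ one has $\Z\subset\Psi([x_\mu]^\bot)$ if and only if $\|w+x_\mu\|\geq\|w\|$ for every $w\in\Z$. (Indeed, replacing $w$ by $w/t$ and multiplying by $|t|$ promotes the single inequality to $\|w+tx_\mu\|\geq\|w\|$ for all $t$, which is exactly membership of $w$ in $\Psi([x_\mu]^\bot)$.) Thus the $\Psi$-condition on a subspace collapses to precisely the $t=1$ inequality appearing in the definition of a.f. I would then apply a.f.\ with the fixed vector $x_\mu$, the limit subspace $\overline{W_\mu}$ (note $\dens(\overline{W_\mu})\leq|\mu|\vee\omega$, which is $\leq\lambda$ in the $(\lambda,\kappa)$ case and $<\kappa$ in the $\kappa$ case), and the nested family $\Z_\alpha:=\overline{W_\mu+B_\alpha}$, where $\{B_\alpha\}_{\alpha<\kappa}$ is a decreasing family of large subspaces of $A_\mu$ with $\bigcap_\alpha B_\alpha=\{0\}$. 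Compatibility of $x_\mu$ with $W_\mu$ supplies the hypothesis $\forall z\in\overline{W_\mu}\,(\|x_\mu+z\|\geq\|z\|)$, so a.f.\ returns an index $\alpha_0$ with $\forall z\in\Z_{\alpha_0}\,(\|x_\mu+z\|\geq\|z\|)$; by the scaling remark $\overline{W_\mu+B_{\alpha_0}}\subset\Psi([x_\mu]^\bot)$. Setting $Z_\mu:=B_{\alpha_0}$ gives $Z_\mu\subset A_\mu$ (rule $(1)$) and $W_\mu+Z_\mu\subset\Psi([x_\mu]^\bot)$ (rule $(3')$).

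It remains to run the recursion to full length. The family $\{B_\alpha\}$ and the convergence $\Z_\alpha\stackrel{\w}{\longrightarrow}\overline{W_\mu}$ (equivalently $\overline{W_\mu}^\bot=\bigcup_\alpha\Z_\alpha^\bot$) are manufactured from the WLD/PRI structure inside the WLD subspace $A_\mu$ using property (C); here one takes $B_\alpha$ to be tails of a PRI of $A_\mu$, so that $\codens(B_\alpha)<\kappa$ throughout. Consequently $\codens(Z_\mu)<\kappa$, and by the WLD codensity formula $\codens(A_\mu)\leq|\mu|\vee\bigvee_{\gamma<\mu}\codens(Z_\gamma)$, which stays $<\kappa$ for every $\mu<\kappa$ (resp.\ $\mu<\lambda^+$) by the regularity of $\kappa$. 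Exactly as in Lemma \ref{prop: compatible}, this keeps $A_\mu\neq\{0\}$ (its annihilator has $\wst$-density $<\kappa=\wst\dash\dens(\X^*)$) and lets $\PI$ move at every stage, so $\PI\uparrow\G_{\X,\kappa}$ in the $\kappa$-a.f.\ case. In the $(\lambda,\kappa)$-a.f.\ case the same construction runs for $\mu<\lambda^+$, where $\dens(\overline{W_\mu})\leq\lambda$ makes $(\lambda,\kappa)$-a.f.\ applicable and $\lambda^+\leq\kappa$ keeps the density count below $\wst\dash\dens(\X^*)$, yielding $\PI\uparrow\G_{\X,\lambda^+}$.

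I expect the main obstacle to be twofold. First, a.f.\ only bounds $\codens(Z_\mu)$ by \emph{some} ordinal $<\kappa$, not by $|\mu|$, so one cannot quote Lemma \ref{prop: compatible} verbatim; instead one re-uses its proof, observing that $\codens(Z_\mu)<\kappa$ with suprema staying $<\kappa$ is all that the continuation argument actually needs. Second, the clean construction of the interpolating family $\Z_\alpha=\overline{W_\mu+B_\alpha}$ with the required annihilator-union (weak convergence) property and the closedness of the sums $W_\mu+B_\alpha$ must be carried out carefully within $A_\mu$; this is routine WLD bookkeeping via property (C) and the positive-angle criterion for closed sums, but it is the step most likely to hide technical hypotheses.
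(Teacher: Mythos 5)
Your proposal is correct and takes essentially the same route as the paper's own proof: pick a compatible $x_\mu$ from the kernels of a norming family intersected with the current arena (using $\wst\dash\dens(\X^*)=\kappa$, which follows from WLD), build a nested weakly-convergent family of superspaces of $[x_\gamma,y_\gamma\colon\gamma<\mu]$ out of the WLD coordinate system, apply asymptotic freeness together with the scaling remark to upgrade the $t=1$ inequality to the $\Psi$-condition of rule $(3^\prime)$, and keep all codensities below $\kappa$ via the WLD codensity estimate and the regularity of $\kappa$. The only implementation difference is that the paper obtains the nested family from the kernels $\bigcap_{\nu<\eta}\ker f_\nu$ of an M-basis of the ambient space $\X$ (intersecting with the previous innings only at the end), whereas you use PRI tails inside the arena $A_\mu$; these are the same mechanism, and your choice of $\overline{W_\mu}$ as the limit subspace in fact handles the $\dens\leq\lambda$ requirement of the $(\lambda,\kappa)$-a.f.\ case cleanly.
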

\begin{proof}
Let $\{(z_\alpha , f_\alpha )\}_{\alpha < \kappa}$ be an M-basis as in the characterization 
of WLD spaces \eqref{eq: count}. Fix $\mu<\kappa$ and suppose that innings $\Z_\alpha$, $y_\alpha$, 
$\alpha<\mu$, with $\codens(\Z_\alpha )<\kappa$ and points $x_\alpha$, $\alpha\leq \mu$, have been played in the game. In the second part of the statement we assume $\mu<\lambda^+$ instead but otherwise the argument runs similarly.
Here $x_\mu$ must be in particular compatible with 
$\{x_\alpha , y_\alpha \colon \alpha<\mu\}$. This can be arranged since $\wst\dash\dens(\X^*) = \kappa$. This in turn is due to $\dens(\X)=\kappa$ and WLD assumptions.
Let $\lambda_\mu <\kappa$ be such that 
\[x_\alpha, y_\beta \in [z_\gamma \colon \gamma< \lambda_\mu ],\quad \alpha\leq \mu,\ \beta<\mu.\]
By standard considerations in \cite{Tal2}, using Corson's property (C), we have that 
\begin{equation}\label{eq: setW}
\overline{[z_\gamma \colon \gamma< \lambda_\mu ] + \bigcap_{\nu<\eta} \ker f_\nu} \stackrel{\w}{\longrightarrow}  [z_\gamma \colon \gamma< \lambda_\mu ],\quad \eta\to \kappa .
\end{equation}

By using the compatibility condition we have that $\|z + x_\mu\| \geq \|z\|$
for all $z\in [x_\alpha, y_\alpha\colon \alpha<\mu]$.

Thus, by using \eqref{eq: setW} with the asymptotic freeness assumption we obtain that 
there is $\eta <\kappa$ such that 
$\|z + x_\mu\| \geq \|z\|$ holds for all
\[z\in \overline{[x_\alpha , y_\alpha \colon \alpha< \mu ] + \bigcap_{\nu <\eta } \ker f_\nu}.\]
It is easy to see that then in fact $\|z + r x_\mu\| \geq \|z\|$ holds for all $r\in \R$ and 
\[z\in \overline{[x_\alpha , y_\alpha \colon \alpha< \mu ] + \bigcap_{\nu <\eta } \ker f_\nu}.\]
A suitable subspace $\Z_\mu$ for the $\PI$ inning is then 
\[\Z_\mu := \overline{[x_\alpha , y_\beta \colon \alpha\leq \mu,\ \beta<\mu ] + \bigcap_{\nu<\eta} \ker f_\nu}\ \cap\ \bigcap_{\alpha<\mu} \Z_\alpha .\]
Note that $\codens(\Z_\mu) \leq \eta \vee \bigvee_{\alpha<\mu}\codens(\Z_\alpha ) < \kappa$. Thus the game can proceed towards $\PI$ winning.
\end{proof}

The following result has some bearing on a question posed  in \cite{DLT} regarding the value of $\mathfrak{nc}_{\mathrm{rfl}}$, the least cardinal $\kappa$ such that any reflexive space 
of density $\geq\! \kappa$ has a countable unconditional basic sequence. 
Perhaps $\mathfrak{c}^+ =(2^\omega )^+$ is a somewhat natural candidate for $\mathfrak{nc}_{\mathrm{rfl}}$ since $\mathfrak{nc}_{\mathrm{rfl}} > \aleph_1$ (\cite{ALT}), $\mathfrak{c} =\aleph_1$ consistently and $\mathfrak{nc}\geq \mathfrak{c}^+$ (\cite{AT2}).

\begin{corollary}\label{cor: only}
Let $\X$ be a WLD space with $\dens(\X)= \kappa \geq \mathfrak{c}^+$. Suppose that $\X$ is isomorphically 
a subspace of $\ell_{\omega}^\infty (\kappa)$, or, equivalently, there is $C>0$, and 
$\{f_\alpha \}_{\alpha< \kappa}\subset \S_{\X^*}$ such that $C\B_{\X^*} \subset \overline{\conv}^{\wst}(f_\alpha \colon \alpha< \kappa )$ and 
\[|\{\alpha< \kappa \colon f_\alpha (x) \neq 0 \}| \leq \aleph_0\quad \forall x \in \X .\] 
Then $\X$ has an unconditional basic sequence of length $\omega_1$.
In the case with isometric embedding the unconditional sequence can be chosen to be additionally $1$-suppression unconditional. 
\end{corollary}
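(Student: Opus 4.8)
The plan is to realize this as a direct corollary of the game-theoretic results: Theorem~\ref{thm: spread} supplies the asymptotic freeness of $\ell^\infty_\omega(\kappa)$, Theorem~\ref{thm: af} converts freeness into a winning strategy for $\PI$, and Proposition~\ref{prop:  single} turns that strategy into the desired basic sequence. Before that I would settle the parenthetical equivalence of the two hypotheses. An embedding $T\colon\X\to\ell^\infty_\omega(\kappa)$ is, up to normalization, nothing but the evaluation map $x\mapsto(f_\alpha(x))_{\alpha<\kappa}$ with $f_\alpha\in\S_{\X^*}$; its image lies in $\ell^\infty_\omega(\kappa)$ exactly when every $x$ has $|\{\alpha\colon f_\alpha(x)\neq0\}|\le\aleph_0$, while the lower isomorphism (resp.\ isometry) bound $\|Tx\|=\sup_\alpha|f_\alpha(x)|\ge C\|x\|$ is precisely the statement that $\{f_\alpha\}$ is $C$-norming, i.e.\ $C\B_{\X^*}\subset\overline{\conv}^{\wst}(f_\alpha\colon\alpha<\kappa)$ (after closing the family under sign changes). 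This is the routine duality between norming families and embeddings into $\ell^\infty$-type spaces, and it gives both implications.

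Next I would verify that $\ell^\infty_\omega(\kappa)$ is $(\omega,\kappa)$-a.f. Writing $\ell^\infty_\omega(\kappa)=\ell^\infty_{<\omega_1}(\kappa)=\ell^\infty_\lambda(\mu)$ with $\lambda=\omega$ and $\mu=\kappa$, the hypothesis $\kappa\ge\mathfrak{c}^+$ gives $2^\lambda=\mathfrak{c}<\kappa\le\mu$, so the second part of Theorem~\ref{thm: spread} applies verbatim and yields that $\ell^\infty_\omega(\kappa)$ is $(\omega,\kappa)$-asymptotically free. Since asymptotic freeness is inherited by closed subspaces, the isometric copy of $\X$, and hence $\X$ itself, is $(\omega,\kappa)$-a.f.; in the merely isomorphic case the embedding $T$ transports the norm of $\ell^\infty_\omega(\kappa)$ to an equivalent norm on $\X$ for which $\X$ is $(\omega,\kappa)$-a.f., i.e.\ $\X$ is $(\omega,\kappa)$-a.f.\ up to a renorming.

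I would then feed this into Theorem~\ref{thm: af}. As $\X$ is WLD with $\dens(\X)=\kappa$ and is $(\lambda,\kappa)$-a.f.\ with $\lambda=\omega$, that theorem gives $\PI\uparrow\G_{\X,\lambda^+}=\PI\uparrow\G_{\X,\omega_1}$. A winning strategy for $\PI$ in the two-player game $\G_{\X,\omega_1}$ is stronger than one in the one-player game, so by Proposition~\ref{prop:  single} (used exactly as in Lemma~\ref{prop: compatible}) the space $\X$ admits a $1$-suppression unconditional basic sequence of length $\omega_1$. This finishes the isometric case, where $C=1$ and the renorming is trivial. In the isomorphic case the sequence produced is $1$-suppression unconditional for the renormed norm; transporting it back along $T$ multiplies the suppression constant by at most the isomorphism constant, so it remains a $C'$-suppression, hence unconditional, basic sequence of length $\omega_1$ for the original norm.

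The main work is not in any one deep estimate but in the cardinal bookkeeping at the seam with Theorem~\ref{thm: spread}: one must choose $\lambda=\omega$ and $\mu=\kappa=\dens(\X)$ simultaneously so that the single inequality $2^\lambda<\kappa\le\mu$ holds, and this is exactly where the assumption $\kappa\ge\mathfrak{c}^+$ is spent; moreover the \emph{second} a.f.\ parameter must be $\kappa=\dens(\X)$ in order for Theorem~\ref{thm: af} to output a strategy for the target length $\lambda^+=\omega_1$ rather than some other ordinal. The one point genuinely requiring care is the isomorphic case: since the game $\G_{\X,\kappa}$ and the operator $\Psi$ are defined through the norm, I would check that passing to the equivalent norm and back degrades $1$-suppression unconditionality only to $C'$-suppression unconditionality, which is still enough to claim an unconditional basic sequence but not the sharper $1$-suppression conclusion reserved for the isometric hypothesis.
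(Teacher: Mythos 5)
Your proposal is correct and follows essentially the same route as the paper, whose (very terse) proof invokes Theorem~\ref{thm: af} together with Lemma~\ref{prop: compatible} for the isometric case and a renorming for the isomorphic case; you have simply spelled out the implicit steps (Theorem~\ref{thm: spread} with $\lambda=\omega$, $2^\omega=\mathfrak{c}<\kappa$, heredity of asymptotic freeness to subspaces, and Proposition~\ref{prop: single} via the game), including the correct observation that only $C'$-suppression unconditionality survives the renorming.
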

\begin{proof}
In the case of $C=1$ Theorem \ref{thm: af} and and Lemma \ref{prop: compatible} apply directly. The case 
$C<1$ is then obtained by a straight-forward renorming argument.
\end{proof}

Above the isomorphic containment in $\ell_{\lambda}(\kappa)$ is not an assumption of topological nature 
as in the typical analysis of the weak topology of M-bases. Instead, the norming properties of sets of functionals are essential here. Recall that every Banach space $\X$ of density $\kappa$ can be isometrically embedded in $\ell^\infty (\kappa)$ but its subspace $\ell_{\omega}^\infty (\kappa)$
is considerably smaller, possibly with a smaller density. For example, infinite direct sums of separable spaces 
in the $c_0$-sense satisfy the hypothesis of the above result.

\section{Bimonotone PRIs and basic sequences}

Recall that the $(\sigma)$ property can be seen as a kind of Banach space version of the Baire property of topological spaces. Next we will in a sense dualize the $(\sigma)$ property to obtain a kind of contravariance principle, stating that the annihilator of a small subspace $1$-norms a large subspace. 
Compare to Proposition \ref{prop: bi_embed}.
\begin{enumerate}
\item[$\mathrm{(d\dash d)}$] Given $\Y \subset \X$, $\dens(\Y)<\dens(\X)$, there is a closed subspace $\Z \subset \X$ with 
$\Z \subset \Psi(\Y^\bot )$, $\codens(\Z)\leq \dens(\Y)$. 
\end{enumerate}

By a projectional sequence we mean a system of bounded linear projections as in PRI, except that the projection constants are only required to be uniformly bounded (instead of being $1$), see \cite{Hajek_biortsyst}.
By a bimonotone PRI we mean a PRI where the  projections are bicontractive, i.e. both the projections and the corresponding coprojections are norm-$1$. By the bicontractive Separable Complementatation Property we mean that every separable subspace is contained in a separable subspace which is complemented by a \emph{bicontractive} linear projection.

\begin{theorem}\label{thm: main_PRI}
Let $\X$, $\dens(\X)=\kappa$, be a WLD space satisfying condition $\mathrm{(d\dash d)}$.
Then the following conditions hold: 
\begin{enumerate}
\item{$\X$ has bicontractive SCP. Moreover, every subspace $\Y \subset \X$, $\dens(\Y) < \dens(\X)$, admits a bicontractively complemented space $\Z\subset \X$ with $\Y\subset \Z$ and $\dens(\Z)=\dens(\Y)$.}
\item{Any M-basis of $\X$ is, up to a permutation, subordinated to a bimonotone PRI on $\X$} 
\item{If $\dens(\X)=\aleph_1$, given any M-basis on $\X$, any PRI on $\X$ admits a bimonotone coarsening which the M-basis is subordinated to.}
\end{enumerate} 
\end{theorem}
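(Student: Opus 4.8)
The plan is to reduce all three parts to a single geometric dictionary between norm-one (co)projections and the norming functor $\Psi$, and then to supply the reverse (coprojection) half of that dictionary from condition $\mathrm{(d\dash d)}$, obtaining the forward (projection) half from the usual WLD machinery. The key observation is elementary: for a closed decomposition $\X = E \oplus F$ with projection $P$ onto $E$ along $F$, one has $\|P\|\le 1$ iff $\dist(e,F)=\|e\|$ for all $e\in E$, which by Hahn--Banach is exactly $E\subset\Psi(F^\bot)$; dually $\|\I-P\|\le 1$ iff $F\subset\Psi(E^\bot)$. Hence $P$ is bicontractive precisely when both $E\subset\Psi(F^\bot)$ and $F\subset\Psi(E^\bot)$ hold, and $\mathrm{(d\dash d)}$ is tailored to manufacture the second: applied to the small space $E$ it produces a large $\Z\subset\Psi(E^\bot)$ with $\codens(\Z)\le\dens(E)$, the reverse-monotone ingredient that ordinary PRI theory does not give.

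For parts (2) and (3) I would locate a club of bicontractive levels. Fix an M-basis $\{(x_\beta,f_\beta)\}_{\beta<\kappa}$ with the countable-support property \eqref{eq: count}, and for $\theta<\kappa$ set $E_\theta=[x_\beta:\beta<\theta]$, $F_\theta=\overline{[x_\beta:\beta\ge\theta]}$. The forward condition $E_\theta\subset\Psi(F_\theta^\bot)$ holds on a club $C_1$ by the standard construction of a PRI subordinated (after reordering) to an M-basis of a WLD space. For the reverse condition I would define a normal map $\theta\mapsto\theta^+$: apply $\mathrm{(d\dash d)}$ to $E_\theta$ to get $\Z\subset\Psi(E_\theta^\bot)$ with $\codens(\Z)\le|\theta|$, pick a $\wst$-dense $\{g_i\}\subset\Z^\bot$ of cardinality $\le|\theta|$, and using \eqref{eq: count} let $\theta^+>\theta$ bound all of their countable supports, which is $<\kappa$ by regularity. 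Then every $g_i$, hence all of $\Z^\bot=\overline{\{g_i\}}^{\wst}$, annihilates $F_{\theta^+}$, giving $F_{\theta^+}\subset\Z\subset\Psi(E_\theta^\bot)$. At a limit $\theta_*$ closed under $+$ the reverse condition follows: for $z\in F_{\theta_*}$ and cofinally many $\theta<\theta_*$ one has $z\in F_{\theta^+}\subset\Psi(E_\theta^\bot)$, i.e. $\dist(z,E_\theta)=\|z\|$, whence $\dist(z,E_{\theta_*})=\inf_{\theta<\theta_*}\dist(z,E_\theta)=\|z\|$ since $E_{\theta_*}=\overline{\bigcup_{\theta<\theta_*}E_\theta}$. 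Thus the reverse-monotone levels contain a club $C_2$, and on $C_1\cap C_2$ the $P_\theta$ are bicontractive; a continuous re-enumeration of $C_1\cap C_2$ is a bimonotone PRI, which yields (2), the permutation merely aligning the chosen club with a continuous enumeration making the M-basis subordinated. For (3), with $\dens(\X)=\aleph_1$ and a PRI plus M-basis given, the PRI levels form a club $D$ whose projections are already norm-one; intersecting $D$ with the reverse club $C_2$ (built as above over countable ordinals) and coarsening to $D\cap C_2$ gives the bimonotone coarsening subordinated to the M-basis.

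Part (1) I would obtain as a one-step back-and-forth. Given $\Y$ with $\tau=\dens(\Y)<\kappa$, build increasing density-$\tau$ spaces $\Z_n\supseteq\Y$ and increasing $\Phi_n\subset\S_{\X^*}$, $|\Phi_n|\le\tau$, alternating three moves: (forward) keep $\Phi_n$ a $1$-norming set for $\Z_n$; (reverse) apply $\mathrm{(d\dash d)}$ to $\Z_n$ to get $W_n\subset\Psi(\Z_n^\bot)$ with $\codens(W_n)\le\tau$, write $W_n=\bigcap_\psi\ker\psi$ over $\le\tau$ functionals and adjoin those $\psi$ to $\Phi_{n+1}$ so that $\bigcap_{\phi\in\Phi_{n+1}}\ker\phi\subseteq W_n$; (surjectivity) enlarge $\Z_{n+1}$ by a density-$\tau$ set whose image is dense in $\X/\bigcap_{\phi\in\Phi_{n+1}}\ker\phi$. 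In the limit, with $\Z=\overline{\bigcup_n\Z_n}$ (density $\tau$) and $F=\bigcap_n\bigcap_{\phi\in\Phi_n}\ker\phi$ (so $F^\bot=\overline{\bigcup_n\Phi_n}^{\wst}$, $\codens(F)\le\tau$ by the WLD subadditivity of $\codens$, i.e. $(\sigma)$), the forward limit gives $\Z\subset\Psi(F^\bot)$, the reverse limit gives $F\subset\Psi(\Z^\bot)$ from $F\subseteq W_n\subset\Psi(\Z_n^\bot)$, and the surjectivity move forces $q(\Z)$ dense in $\X/F$. Hence $\X=\Z\oplus F$ carries a bicontractive projection onto $\Z$; taking $\tau=\aleph_0$ gives bicontractive SCP and general $\tau$ the density-preserving statement.

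The crux in every part is the simultaneity of the two norming conditions: PRI theory freely yields $E\subset\Psi(F^\bot)$ but is silent on the coprojection, and naively enforcing $F\subset\Psi(E^\bot)$ at one level destroys it at the next, since enlarging $E$ shrinks $E^\bot$ and hence $\Psi(E^\bot)$. Condition $\mathrm{(d\dash d)}$ reinstates the reverse condition at each stage, and two technical points make it cohere: the identity $\dist(z,\overline{\bigcup_i M_i})=\inf_i\dist(z,M_i)$ for increasing chains, which lets the reverse condition survive limits, and the capture of an honest complementary subspace (the tail $F_{\theta^+}$, resp. the kernel $F$) inside the abstractly-given $\mathrm{(d\dash d)}$-subspace. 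I expect this last point --- arranging that the genuine complement lands \emph{inside} the $\Psi$-subspace produced by $\mathrm{(d\dash d)}$, rather than merely sharing its codensity --- to be the main obstacle; it is exactly here that the countable supports \eqref{eq: count} and the regularity of $\kappa$ are indispensable.
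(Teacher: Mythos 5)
Your core mechanism is sound and is in fact the same engine that drives the paper's proof: apply $\mathrm{(d\dash d)}$ to a small subspace, use \eqref{eq: count} and the regularity of $\kappa$ to capture the countable supports of a $\wst$-dense subset of $\Z^\bot$, and let the reverse condition survive limits via $\dist(z,\overline{\bigcup_\theta E_\theta})=\inf_\theta\dist(z,E_\theta)$. The club packaging is attractive and does work when $\kappa=\omega_1$, where every infinite initial segment is countable; so part (3) is essentially recoverable, modulo one omission you should still repair: at an aligned level you know $F_\theta\subset\Psi(E_\theta^\bot)$, but bimonotonicity of the coarsened projection requires $\ker P_\alpha=F_\theta$, and a norm-one projection onto $E_\theta$ can have a skew kernel (already in $\ell^3_1$); one must also align kernels, e.g. by getting $P_\alpha^*\X^*=\overline{[f_\beta\colon\beta<\theta]}^{\wst}$ on a club and invoking the WLD identity $\bigcap_{\beta<\theta}\ker f_\beta=F_\theta$.

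The genuine gap is in part (2) for $\kappa>\omega_1$, and it sits exactly where you predicted the main obstacle would be. Your map $\theta\mapsto\theta^+$ does not preserve cardinality, and nothing in the club construction can restore the growth condition $|\theta_\alpha|=|\alpha|\vee\omega$ that the paper's definition of PRI (and of subordination) demands. The functionals $g_i$ are few and have countable supports, but the supremum of those supports can be an ordinal of arbitrarily large cardinality below $\kappa$: with $\kappa=\omega_2$, already for $\theta=0$ the supports may be cofinal in $\omega_1\cdot\omega$, so $0^+\geq\omega_1\cdot\omega$ and \emph{every} member of $C_2$, hence of $C_1\cap C_2$, has cardinality $\aleph_1$. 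The resulting chain then contains no separable member at all, so no re-enumeration and no permutation of the basis can turn it into a PRI, and bicontractive SCP cannot be extracted from it either. This is precisely the obstruction the paper isolates at the start of its ``full case'' (the closing-off ordinal ``can be any ordinal $<\kappa$, even for finite $\Theta_n$:s'') and resolves by abandoning initial segments: it closes off index \emph{sets} of controlled cardinality and builds the permutations $\pi_{\alpha,n}$ that re-pack the scattered exceptional sets into genuine initial segments. Your remark that ``the permutation merely aligns the chosen club with a continuous enumeration'' is the step that fails: capturing the supports below an ordinal bound loses the density control, while capturing them inside an initial segment of the same cardinality is impossible without permuting first.

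There is a second gap, in part (1). From ``$\Z_{n+1}+K_{n+1}$ is dense in $\X$ for every $n$'' (where $K_n=\bigcap_{\phi\in\Phi_n}\ker\phi$) you cannot conclude that $\Z+F$ is dense for $F=\bigcap_n K_n$: an element of a fixed $K_m$ need not be approximable by $\Z+F$, and the telescoping approximation $x\approx z_1+\cdots+z_n+k_n$ produces summands of uncontrolled size, so $(k_n)$ need not converge into $F$. Dually, nothing forces $\Z^\bot\cap\overline{\span(\bigcup_n\Phi_n)}^{\wst}=\{0\}$. The repair is again support bookkeeping through the M-basis, which your part (1) never touches: using \eqref{eq: count}, adjoin to $\Z_{n+1}$ the vectors $x_\beta$ with $\beta$ in the countable supports of the members of $\Phi_{n+1}$; then in the limit $[x_\beta\colon\beta\in\Theta]\subset\Z$ and $[x_\beta\colon\beta\in\kappa\setminus\Theta]\subset F$ for $\Theta=\bigcup_n\Theta_n$ with $|\Theta|\leq\tau$, so $\Z+F$ contains the dense span of the M-basis while $\dens(\Z)=\dens(\Y)$ is preserved. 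This is in substance how the paper proceeds, by reducing part (1) to the set-based recursion of part (2) rather than running a basis-free back-and-forth.
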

\begin{proof}
As a WLD space $\X$ admits an M-basis $\{(x_\beta, f_\beta )\}_{\alpha<\kappa}$ which in turn can be used in constructing a PRI in this case. The first part of the statement is reduced to the second part by representing $\Y$ in the M-basis and permuting the M-basis in such a way that the support of $\Y$ is an initial segment of it. The bicontractive projection to a superspace of the space generated by the initial segment is constructed
in the inductive step of the simple-case argument of the second part of the statement. The crucial 
fact is that $|\bigcup_{n<\omega} \Theta_n | = \dens(\Y)$ can be arranged below.

To verify the second part of the statement let $\{(x_\beta, f_\beta )\}_{\alpha<\kappa}$ be an M-basis. According to \eqref{eq: count} for any infinite subset $\mathcal{F}\subset \X^{*}$ with $|\mathcal{F}|=\lambda$ there is $\Theta \subset \kappa$, $|\Theta|=\lambda$, such that 
$\{x_\beta \colon \beta \in \kappa \setminus \Theta\} \subset \bigcap_{f\in \mathcal{F}} \ker(f)$.
Then we define 
\[\Xi (\mathcal{F} ) = \bigwedge \Big\{\sigma<\kappa\colon \forall \beta \in [\sigma,\kappa)\ x_\beta \in 
\bigcap_{f\in \mathcal{F}} \ker(f) \Big\}.\]

To prove the statement recursively, assume that the Bimonotone PRI sequence 
$P_\alpha \colon \X \to [x_\beta \colon \beta <\theta_\alpha ]$ has been constructed up to $\alpha<\gamma$, where $\gamma<\kappa$, such that $|\theta_{\alpha+1}\setminus \theta_\alpha |\geq \aleph_0$ (possibly $\gamma=0$).

Thus $\big|\bigvee_{\alpha<\gamma} \theta_\alpha \big| \leq |\gamma| \vee \aleph_0$.
Let $\mathcal{F}_0 \subset \X^*$, $|\mathcal{F}_0 | =|\gamma| \vee \aleph_0$, be such that
$[x_\beta \colon \beta <  \omega\vee \bigvee_{\alpha<\gamma} \theta_\alpha] \subset \Psi (\mathcal{F}_0 )$ (recall \eqref{eq: Psi}).
According to $\mathrm{(d\dash d)}$ assumption there is $\Z_0 \subset \X$, $\Z_0 \subset \Psi ([x_\beta \colon \beta <  \omega\vee \bigvee_{\alpha<\gamma} \theta_\alpha]^\bot )$ with $\codens(\Z_0 ) \leq |\gamma| \vee \aleph_0$.
By using the generalized $(\sigma)$ type codensity estimates of WLD spaces as before, we obtain that 
$\codens(\Z_0 \cap \bigcap_{f \in \mathcal{F}_0 }\ker(f) )\leq |\gamma| \vee \aleph_0$.
Using the characterizing property \eqref{eq: count} of WLD spaces (and the subsequent remark)
there is a set $\Theta_0 \subset \kappa$, $|\Theta_0 | \leq  |\gamma| \vee \aleph_0$, 
such that $\Z_0 \cap \bigcap_{f \in \mathcal{F}_0 }\ker(f) \supset 
[x_\beta \colon \beta \in \kappa\setminus \Theta_0]$.

For the sake of clarity we proceed in $2$ cases. The first case, a partial result, does not require permutation of the M-basis, and the second case, which requires it, covers the full statement.

{\noindent \it Simple case:} We first assume that the M-basis is such that we may choose $\sigma_n$:s below
in such a way that $\Big|\sigma_0 \setminus  \bigvee_{\alpha<\gamma} \theta_\alpha \Big|=|\sigma_{n+1} \setminus \sigma_n | = |\gamma| \vee \aleph_0 $ for $n<\omega$.

We set $\sigma_0 = \Xi \big(\big(\Z_0 \cap \bigcap_{f \in \mathcal{F}_0 }\ker(f) \big)^\bot \big)$.
Next we define $\mathcal{F}_1$ and $\Z_1$ similarly as above replacing $\bigvee_{\alpha<\gamma} \theta_\alpha$ with $\sigma_0$. There is again an exceptional set  $\Theta_1 \subset [\sigma_0 ,\kappa)$, 
$|\Theta_1 | \leq  |\gamma| \vee \aleph_0$, such that $\Z_1 \cap \bigcap_{f \in \mathcal{F}_1 }\ker(f) \supset 
[x_\beta \colon \beta \in \kappa\setminus \Theta_1]$.
Then we set $\sigma_1 = \Xi \big(\big(\Z_1 \cap \bigcap_{f \in \mathcal{F}_1 }\ker(f) \big)^\bot \big)$. Then we obtain $\mathcal{F}_2$ and $\Z_2$ similarly by replacing 
$\sigma_0$ with $\sigma_1$. Similarly we obtain an exceptional set $\Theta_2 \subset [\sigma_1 , \kappa)$, $|\Theta_2 | \leq  |\gamma| \vee \aleph_0$ and define $\sigma_2 = \Xi \big(\big(\Z_2 \cap \bigcap_{f \in \mathcal{F}_2 }\ker(f) \big)^\bot \big)$. We proceed in this manner to obtain an increasing sequence
of ordinals $\sigma_0 < \sigma_1 < \ldots < \sigma_n < \ldots$, $n<\omega$. Recall the simplifying 
assumption that $|\sigma_{n+1} \setminus \sigma_n | \leq |\gamma| \vee \aleph_0$ for each $n<\omega$. Thus $|\bigvee_{n<\omega} \sigma_n | = |\gamma| \vee \aleph_0$.
Then setting $P_\gamma \colon \X \to [x_\beta \colon \beta < \bigvee_{n<\omega} \sigma_n]$,
$P_\gamma \colon x+ z \mapsto x$, $x\in  [x_\beta \colon \beta < \bigvee_{n<\omega} \sigma_n]$, 
$z\in  [x_\beta \colon  \bigvee_{n<\omega} \sigma_n \leq \beta < \kappa]$, defines a bicontractive 
projection. Indeed, the crucial observation here is that, given $x\in  \span(x_\beta \colon \beta < \bigvee_{n<\omega} \sigma_n )$ and $z\in  \span (x_\beta \colon  \bigvee_{n<\omega} \sigma_n \leq \beta < \kappa )$, there is $n<\omega$ such that $x \in  \span(x_\beta \colon \beta < \sigma_n )$ and
 $z \in  \span(x_\beta \colon \sigma_{n+1} \leq \beta <\kappa )$. Thus 
$z \in \bigcap_{f \in \mathcal{F}_{n+1} }\ker(f)$ and $z \in \Psi (\span(x_\beta \colon \beta < \sigma_n )^\bot )$, hence $\|x\|\leq \|x+z\| \geq \|z\|$. This means that in the recursive construction of a Bimonotone PRI the extension to $P_{\alpha+1 }$, $\alpha<\kappa$, is possible. By a similar argument as above with the linear spans the extension to limit ordinals $\lambda<\kappa$ follows; it is 
accomplished  formally by taking the closure of the union of graphs $\overline{\bigcup_{\alpha<\lambda} \Gamma P_\alpha } \subset \X \oplus \X$. It is then clear that the union of images of $P_\alpha$ 
for $\alpha<\lambda$ is dense in the image of $P_\lambda$ and has density $|\lambda|$.

{\noindent \it Full case:} In order to stabilize the argument above we had to make sure that the increases of $\sigma_n$:s have controlled cardinality. 
However, the order type of\\
$\bigwedge \Big\{\sigma < \kappa\colon [\sigma,\kappa ) \subset \kappa \setminus \bigcup_{n<\omega}\Theta_n \Big\}$
can be any ordinal $<\!\!\kappa$, even for finite $\Theta_n$:s. 
We will indicate the required changes to obtain the full statement with a suitable permutation of the 
M-basis.

Instead of $\sigma_{n}^{(\alpha)}$:s we will consider an increasing mapping $\varsigma$ with a modified definition. To introduce some other auxiliary mappings, consider $\kappa \times \omega$ in the lexicographic order and the power set $2^\kappa$ partially ordered by inclusion. 
We will construct a decreasing mapping
$F\colon \kappa \times \omega \to 2^\kappa$. This mapping satisfies 
$F(\alpha+1 , 0)=  \bigcap_{n<\omega} F(\alpha,n)$, $F(\lambda,0)=\bigcap_{\alpha<\lambda} F(\alpha,0)$ for limit ordinals $\lambda<\kappa$. Note that each of these subsets is well-ordered.
Intuitively speaking, the mapping $F$ is related to the $\alpha$:th inductive step of the above construction of the PRI as follows:  $F(\alpha,n)\setminus F(\alpha,n+1) = \Theta_{n+1}$ (up to some modifications). 

We construct bijections $\pi_{\alpha,n} \colon F(\alpha,n) \to F(\alpha,n+1)$ recursively. 
In this construction each ordinal is permuted at most once to another position (and in that case there is 
a unique $\pi_{\alpha,n}$ performing the change). Therefore we may take 
point-wise limits of the permutations; 
$\pi_{(\alpha+1,0)}\colon F(\alpha+1 ,0) \to F(\alpha+1 ,0)$, $\pi_{(\alpha+1,0)}:= \lim_{n\to\omega} \pi_{\alpha,n}$ and $\pi_{(\lambda,0)} \colon F(\lambda,0)\to F(\lambda,0)$, $\pi_{(\lambda,0)} := \lim_{\alpha\to\lambda} \pi_{\alpha,0}$ for a limit ordinal $\lambda<\kappa$.
Here the convergence of the nets is with respect to the product topology on $\kappa^\kappa$ where $\kappa$ is considered in the \emph{discrete} topology.
Let $F(0,0)=\kappa$, $\varsigma (0,0)=0$, $\pi_{0,0} = \mathrm{Id} \colon \kappa\to\kappa$.
Suppose that we have defined a bijection
$\pi_{\alpha,n} \colon F(\alpha,n) \to F(\alpha,n+1)$, $n<\omega$, and (non-permutable control ordinal) 
$\varsigma(\alpha,n) < \kappa$. We impose the following conditions: 
\begin{enumerate}
\item $F(\alpha,n) \setminus F(\alpha,n+1) \subset [\varsigma(\alpha,n), \kappa)$,\quad $|F(\alpha,n) \setminus F(\alpha,n+1)| = |\alpha|\vee \aleph_0$;
\item $\varsigma$ is increasing and moreover
\[|(\varsigma(\alpha,n+1)\cap F(\alpha,n+1)) \setminus (\varsigma(\alpha,n)\cap F(\alpha,n))|=
|\alpha|\vee \aleph_0 ;\]
\item[(3-4)]
\[[x_\beta \colon \beta \in [\varsigma(\alpha,n+1) ,\kappa) \cap\pi_{\alpha,n}( F(\alpha,n+1)) ]
\subset \Psi ([x_\beta \colon \beta \in  \varsigma(\alpha,n)\cap\pi_{\alpha,n}(F(\alpha,n) )]^\bot);\]
\[[x_\beta \colon \beta \in \varsigma(\alpha,n) \cap \pi_{\alpha,n} ( F(\alpha,n) )] \subset 
\Psi([x_\beta \colon \beta \in [\varsigma(\alpha,n+1) ,\kappa) \cap\pi_{\alpha,n} ( F(\alpha,n+1) )]^\bot );\]
\item[(5)] $\pi_{\alpha,n+1} (F(\alpha,n)\setminus F(\alpha,n+1)) \subset [\varsigma(\alpha,n), \varsigma(\alpha,n+1))$;
\item[(6)] $\pi_{\alpha,n+1}\restriction (F(\alpha,n+1) \setminus [\varsigma(\alpha,n) , \varsigma(\alpha,n+1)))
=\mathrm{Id}\restriction$ ;
\item[(7)] {$\pi_{\alpha,n} \restriction F(\alpha,n)\cap \varsigma(\alpha,n)=\pi_{\beta,m} \restriction F(\alpha,n) \cap \varsigma(\alpha,n)$ for $(\alpha,n)\leq (\beta,m)$.}
\end{enumerate}
We define recursively bijections $\pi^{(\alpha)} \colon \kappa \to F(\alpha,0)$ by setting $\pi^{(0)}=\pi_{0,0}$
and 
$\pi^{(\alpha+1)}=\lim_{n\to\omega} (\pi_{\alpha, n} \circ \dots \circ \pi_{\alpha,1}\circ  \pi_{\alpha,0}
\circ \pi^{(\alpha)})$  and 
$\pi^{(\lambda)}=\lim_{\alpha\to\lambda} \pi^{(\alpha)}$ for limit ordinals $\lambda\leq \kappa$;
the limits are again taken in the sense of point-wise convergence.

Now, $\pi^{(\kappa)} \colon \kappa \to F(\kappa ,0 ) \subset \kappa$ is a bijection.
Thus $\pi^{(\kappa)} [0,\kappa)$ is a well-ordered set order isomorphic to $\kappa$ by the regularity of 
$\kappa$. The required permutation is given by $\{y_\gamma \}_{\gamma<\kappa}$ with $y_\gamma = x_{\eta}$, such that $\pi^{(\kappa)}(\eta)\in\pi^{(\kappa)} [0,\kappa)$ has order type $\gamma$ relative to $F(\kappa,0)$. The required PRI projections have the form 
$P_\alpha \colon \X \to [x_\beta \colon \beta \in \pi_{\alpha,0} (F(\alpha,0))\cap \varsigma(\alpha,0)]$
where the indexing begins with $\alpha=1$.

To verify the last part of the statement, let $\{P_\alpha \}_{\alpha<\omega_1}$ be a PRI on $\X$. 
The argument is a modification of the proof of the second part of the statement restricted to the simple case.
Let $\{(x_\beta, f_\beta )\}_{\alpha<\omega_1}$ be an M-basis of $\X$.
Denote
\[\sigma_\alpha = \Xi (\mathcal{F} ),\quad \overline{[\mathcal{F}]}^{\wst}=P_{\alpha}^* (\X^* )\subset \X^* ,\
\alpha< \omega_1 \]
(this definition does not depend on the particular choice of $\mathcal{F}$). Suppose that we have constructed
the required PRI up to $\gamma<\omega_1$, i.e. sequences 
$\{\beta_{\eta}\}_{\eta<\gamma}, \{\alpha_{\eta}\}_{\eta<\gamma} \subset \omega_1$ such that 
\[P_{\eta}' = P_{\alpha_\eta} \colon \X \to [x_\beta\colon \beta < \beta_\eta]\]
define the required bicontractive projections for all $\eta<\gamma$ (possibly $\gamma=0$).
If $\gamma$ is a limit ordinal, then it is easy to see, similarly as above, that we may take 
the closure of the graphs to extend to a bicontractive projection\\ 
$P_{\gamma}' \colon \X = \left[x_\beta \colon \beta < \bigvee_{\eta<\gamma} \beta_\eta\right] \oplus 
\left[x_\beta \colon \bigvee_{\eta<\gamma} \beta_\eta \leq \beta < \omega_1\right]\to \left[ x_\beta \colon \beta < \bigvee_{\eta<\gamma} \beta_\eta\right]$.
The inductive step is defined as follows: Let $\epsilon_0 = \left(\bigvee_{\eta<\gamma} \beta_\eta \right) +\omega$.
Fix a family $\mathcal{F}_0 \subset \X^*$ with
$\left[x_\beta \colon \beta< \epsilon_0  \right] \subset \Psi (\mathcal{F}_0 )$. By using the basic properties of WLD spaces let $\phi_0 = \Xi (\mathcal{F}_0 ) <\omega_1$.
Then necessarily $\epsilon_0 < \phi_0$.
By using the $(\sigma)$ property of $\X$ we can again find the least $\sigma_0 < \omega_1$ such that 
$\phi_0 < \sigma_0$ and 
$\left[x_\beta \colon \beta< \epsilon_0 \right] \subset 
\Psi ([x_\beta \colon \sigma_0 < \beta <\omega_1]^\bot)$. We repeat the analogous considerations with 
$\sigma_0$ in place of $ \left(\bigvee_{\eta<\gamma} \beta_\eta \right) +\omega $ to obtain 
$\sigma_1$ with similar properties. Proceeding in this manner results in an increasing sequence 
$\sigma_n$, $n<\omega$. Now, putting $\beta^{(1)}=\bigvee_{n<\omega} \sigma_n$ satisfies $\epsilon_0 < \beta^{1} < \omega_1$ and there is a natural bicontractive projection 
$[x_\beta \colon \beta < \beta^{(1)}] \oplus [x_\beta \colon \beta^{(1)} \leq \beta <\omega_1 ] \to 
[x_\beta \colon \beta < \beta^{(1)}]$. By using the basic properties of WLD spaces 
we can find $\alpha^{(1)}, \beta^{(1)} < \omega_1$ such that $\bigvee_{\eta<\gamma} \alpha_\eta < \alpha^{(1)}$ and $[x_\beta \colon \beta < \beta^{(1)}]$ is contained in the image of $P_{\alpha^{(1)}}$ 
and $[x_\beta \colon  \beta^{(1)} < \beta < \omega_1 ]$ is in the kernel of $P_{\alpha^{(1)}}$.
Then we repeat the above considerations to obtain $\epsilon_1$ such that 
$\beta^{(1)} < \epsilon_1$ and there is a bicontractive projection onto $[x_\beta \colon \beta < \epsilon_1]$.
We fix $\alpha^{(2)} < \omega_1$ such that the above subspace is in the image of $P_{\alpha^{(2)}}$.
Then towards the kernel of this map we define $\beta^{(2)}$ accordingly. Then we find $\epsilon_2 <\omega_1$
with $\beta^{(2)} <  \epsilon_2$ and a bicontractive projection onto $[x_\beta \colon \beta < \epsilon_2]$.
Now, it is easy to see that 
$P_{\gamma}' = P_{\bigvee_{n<\omega}\alpha^{(n)}} \colon \X \to 
\left[x_\beta \colon \beta < \bigvee_{n<\omega} \beta^{(n)}\right]$
is a bicontractive projection onto. By the basic properties of the PRI and the construction we have
$\ker (P_{\bigvee_{n<\omega}\alpha^{(n)}} ) \subset \bigcap_{\alpha< \bigvee_{n<\omega}\alpha^{(n)}}
\ker (P_\alpha ) \subset  \left[x_\beta \colon \bigvee_{n<\omega} \beta^{(n)} \leq \beta < \omega_1 \right]$. 
By using the basic properties of M-basis we observe that the above inclusions must hold as equalities.
Thus the M-basis is subordinated to $P_{\gamma}'$. We proceed in this manner and it is clear that the resulting sequence is a PRI on the whole of $\X$ because the M-basis is subordinated to 
it. 
\end{proof}

An adaptation of the above back-and-forth argument yields the following fact.  
\begin{proposition}
Let $\X$, $\dens(\X)=\aleph_1$, be a Banach space with property $(\sigma)$. Then the optimal projection constants of projectional sequences on $\X$ are attained. For instance, let $\{P_{\alpha}^{(n)}\}_{\alpha<\omega_1}^{n<\omega}$ be a countable sequence of projectional 
sequences on $\X$ with 
\[\bigwedge_n \bigvee_\alpha \|P_{\alpha}^{(n)} \| = \bigwedge_n \bigvee_\alpha \|\I- P_{\alpha}^{(n)} \| = 1.\]
Then $\X$ already admits a bimonotone PRI.\qed
\end{proposition}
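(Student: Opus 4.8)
The plan is to build a single bimonotone PRI by amalgamating the given countably many near-optimal projectional sequences, with property $(\sigma)$ serving as the amalgamation device (as advertised in the introduction). First I would pass to a subsequence in $n$ so that, with $\varepsilon_n\downarrow 0$, one has $\bigvee_\alpha\|P_\alpha^{(n)}\|\le 1+\varepsilon_n$ and $\bigvee_\alpha\|\I-P_\alpha^{(n)}\|\le 1+\varepsilon_n$; this is exactly what the hypothesis $\bigwedge_n\bigvee_\alpha\|P_\alpha^{(n)}\|=\bigwedge_n\bigvee_\alpha\|\I-P_\alpha^{(n)}\|=1$ supplies. Write $Y_\alpha^{(n)}=P_\alpha^{(n)}\X$ and $K_\alpha^{(n)}=\ker P_\alpha^{(n)}$ for the exact splitting $\X=Y_\alpha^{(n)}\oplus K_\alpha^{(n)}$. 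Since $\dens(\X)=\aleph_1$ and each $\{Y_\alpha^{(n)}\}_\alpha$ is a continuous increasing chain with dense union, every separable subspace is already contained in some $Y_\beta^{(n)}$ with $\beta<\omega_1$ (take the countable supremum of the levels capturing a countable dense set); in particular each $K_\beta^{(n)}$ with $\beta<\omega_1$ is coseparable.

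Next I would construct, by transfinite recursion on $\xi<\omega_1$, a continuous increasing chain of separable closed subspaces $W_\xi$ together with complements $V_\xi$, $\X=W_\xi\oplus V_\xi$, subordinating a PRI $\{P_\xi\}_{\xi<\omega_1}$. At stage $\xi$ I run an inner $\omega$-length back-and-forth producing an increasing sequence of separable subspaces $S_0\subset S_1\subset\cdots$ (with $S_0\supseteq W_{<\xi}$ and a fresh generator to guarantee $\overline{\bigcup_\xi W_\xi}=\X$) and ordinals $\beta_k<\omega_1$ in a sequence $n_k$ arranged so that every value of $n$ recurs cofinally, maintaining $S_k\subseteq Y_{\beta_k}^{(n_k)}$ at each step. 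I then set $W_\xi=\overline{\bigcup_k S_k}$ and $V_\xi=\bigcap_k K_{\beta_k}^{(n_k)}$; property $(\sigma)$ keeps $V_\xi$ coseparable, so $W_\xi$ stays separable and the increment has the correct density. Limit stages of the outer recursion are handled, as in the full case of Theorem \ref{thm: main_PRI}, by passing to the closure of the union of the graphs.

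Bimonotonicity is then the optimal-constant mechanism in the limit: given $w\in\bigcup_k S_k$ and $v\in V_\xi$, for all sufficiently large $k$ one has $w\in S_k\subseteq Y_{\beta_k}^{(n_k)}$ and $v\in V_\xi\subseteq K_{\beta_k}^{(n_k)}$, whence $\|w\|=\|P_{\beta_k}^{(n_k)}(w+v)\|\le(1+\varepsilon_{n_k})\|w+v\|$ and likewise $\|v\|\le(1+\varepsilon_{n_k})\|w+v\|$. Since each $n$ recurs cofinally, such $k$ may be chosen with $n_k$ arbitrarily large, so the estimates pass to $\|w\|\le\|w+v\|$ and $\|v\|\le\|w+v\|$. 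By density this extends to all of $W_\xi\times V_\xi$, so the projection $P_\xi$ onto $W_\xi$ along $V_\xi$ is bicontractive, and the resulting norm-one, continuous, increasing chain with dense union is a bimonotone PRI.

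The step I expect to be the main obstacle is maintaining an exact, bicontractively complemented decomposition $\X=W_\xi\oplus V_\xi$ throughout the recursion, i.e. the fullness $W_\xi+V_\xi=\X$ alongside directness. The bimonotone estimate already gives $W_\xi\cap V_\xi=\{0\}$ and closedness of the sum with a norm-one projection, but surjectivity is delicate precisely because the kernels $K_{\beta_k}^{(n_k)}$ coming from different sequences do not nest, so one cannot simultaneously force $W_\xi$ to be an image while cutting $V_\xi$ by kernels. Overcoming this requires the interleaved bookkeeping of the full case of Theorem \ref{thm: main_PRI} (the control ordinals and permutations $F,\varsigma,\pi$ there), now driven by the exact splittings $\X=Y_\beta^{(n)}\oplus K_\beta^{(n)}$ rather than an M-basis and stabilized by coseparability-under-countable-intersection from $(\sigma)$; the general statement that optimal projection constants are attained then follows by the same scheme with $1$ replaced by the relevant infimum.
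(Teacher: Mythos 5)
Your overall route (amalgamate the countably many near-optimal projectional sequences via an interleaved inner $\omega$-recursion, use $(\sigma)$ to keep the intersected kernels coseparable, and pass to closures of unions of graphs at limit stages) is exactly the adaptation of the back-and-forth of Theorem \ref{thm: main_PRI} that the paper invokes, and the paper offers no further details. But the step you yourself flag as the main obstacle is a genuine gap, and it is fatal to the construction as you set it up. If the inner recursion captures images back and forth (so that the limit images of all scheduled sequences coincide with $W_\xi$), then each limit projection $Q_n := P^{(n)}_{\beta^{(n)}_{\lim}}$ is a projection of $\X$ \emph{onto} $W_\xi$, and your $V_\xi$ equals $\bigcap_n \ker Q_n$ (within one sequence, the kernel at a limit level is the intersection of the kernels below it). Now for any $x \in \X$ the coset $x + W_\xi$ meets $\ker Q_n$ in the single point $(\I - Q_n)x$; hence $x \in W_\xi + V_\xi$ forces all the vectors $(\I - Q_n)x$ to coincide, so $W_\xi \oplus V_\xi = \X$ holds only when all the limit projections are literally equal. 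Projections belonging to different projectional sequences need not commute and there is no reason whatsoever for them to agree, so your $V_\xi$ is essentially never a complement. If instead you do not capture images, then the decomposition $x = P^{(n_k)}_{\beta_k}x + (\I - P^{(n_k)}_{\beta_k})x$ only places $x$ in $Y^{(n_k)}_{\beta_k} + K^{(n_k)}_{\beta_k}$, not in $W_\xi + V_\xi$. Either way $W_\xi + V_\xi$ is in general a proper subspace, and the projection $P_\xi$ you want does not exist on all of $\X$.

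Deferring the repair to ``the interleaved bookkeeping of the full case of Theorem \ref{thm: main_PRI}'' does not close this gap, because that bookkeeping (the maps $F$, $\varsigma$, $\pi$) solves a different problem, namely permuting the M-basis; fullness there is free, since every decomposition used is either along initial/tail segments of a single M-basis, whose linear span is dense, or is a member of the one given PRI taken at a limit level, whose kernel sits inside the M-basis tail span by subordination. Both mechanisms disappear in the present setting: a kernel of one projectional sequence is in general contained in no kernel of another (equivalently, there are no containments between their adjoint images), so the sandwich $\ker(\text{limit projection}) \subset K^{(n_k)}_{\beta_k}$ that transfers the contractive estimates in Theorem \ref{thm: main_PRI} cannot be arranged here. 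What the proposition really requires --- and what your proposal does not supply --- is a mechanism making the near-optimal constants of \emph{different}, mutually non-commuting sequences act on one and the same exact decomposition of $\X$. A smaller point: your opening reduction to a single subsequence of $n$'s along which both $\bigvee_\alpha \|P^{(n)}_\alpha\| \leq 1+\varepsilon_n$ and $\bigvee_\alpha \|\I - P^{(n)}_\alpha\| \leq 1+\varepsilon_n$ is not justified by the displayed hypothesis, whose two infima may be attained along disjoint sets of indices; this one, however, is harmless, since your final estimates only need each bound to recur cofinally in the schedule, so the two subsequences can simply be interleaved.
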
 
   
It is known that one may extract a long monotone basic sequence in a fairly general case (see \cite{Tal2})
from a strongly dispersed ($\SD$) sequence, i.e. $\{y_\alpha \}_{\alpha<\kappa}\subset \S_\X$ such that 
$\bigcap_{\gamma<\kappa} [y_\beta \colon \gamma<\beta<\kappa] = \{0\}$, thus including 
sequences weakly convergent to $0$. Finding a bimonotone subsequence is, however, a very different task and it appears considerably harder, although bimonotonicity, in turn, is a much weaker condition than unconditionality.

\begin{theorem}\label{thm: main}
Let $\X$ be a WLD space satisfying condition $\mathrm{(d\dash d)}$.
Suppose that $\{y_\alpha\}_{\alpha<\kappa} \subset \S_\X$ is a $\SD$ sequence. 
Then there exists a bimonotone subsequence $\{y_{\alpha_\gamma}\}_{\gamma<\kappa}$. 
\end{theorem}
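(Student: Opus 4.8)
The plan is to build the subsequence by a single transfinite recursion of length $\kappa$, maintaining at every stage that the vectors chosen so far form a bimonotone basic sequence, and to secure the two halves of bimonotonicity by two running constraints imposed on all \emph{future} choices. Since $\X$ is WLD it has Corson's property (C), so the $\SD$ hypothesis is equivalent to $\{y_\alpha\}_{\alpha<\kappa}$ being weakly convergent to $0$ (recall $\cf(\kappa)=\kappa>\omega$). The monotone-only version of this extraction from an $\SD$ sequence is already known (\cite{Tal2}); my task is to interleave, with the same selection mechanism, an extra constraint that forces reverse monotonicity as well, and to check that the bookkeeping survives.

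Fix the recursion. Suppose $\{y_{\alpha_\nu}\}_{\nu<\gamma}$, $\gamma<\kappa$, has been chosen, spanning a bimonotone basic sequence $E_\gamma=[y_{\alpha_\nu}\colon \nu<\gamma]$; note $\dens(E_\gamma)=|\gamma|\vee\aleph_0<\kappa$ by regularity of $\kappa$. For the \emph{reverse}-monotone half I would invoke condition $\mathrm{(d\dash d)}$ applied to $E_\gamma$ to obtain a closed subspace $\Z_\gamma\subset\Psi(E_\gamma^\bot)$ with $\codens(\Z_\gamma)\leq\dens(E_\gamma)<\kappa$, and I would demand that every later chosen vector lie in $\Z_\gamma$. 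The essential point of using $\mathrm{(d\dash d)}$ here, rather than mere compatibility, is that $\Psi$ is \emph{not} linear: because $\Z_\gamma$ is an honest closed subspace sitting inside $\Psi(E_\gamma^\bot)$, the whole closed tail span $[y_{\alpha_\rho}\colon\rho\geq\gamma]$ stays inside $\Z_\gamma\subset\Psi(E_\gamma^\bot)$, whence $\|z\|=\sup_{f\in E_\gamma^\bot,\,\|f\|=1}|f(z)|\leq\|y+z\|$ for all $y\in E_\gamma$ and every tail vector $z$. For the \emph{forward} (monotone) half I would carry along an increasing $1$-norming family $\mathcal{F}_\gamma\subset\X^*$ for $E_\gamma$ with $|\mathcal{F}_\gamma|<\kappa$ and demand every later vector lie in $\bigcap_{f\in\mathcal{F}_\gamma}\ker f$, giving $\|y\|=\sup_{f\in\mathcal{F}_\gamma,\,\|f\|=1}|f(y)|\leq\|y+z\|$ for $y\in E_\gamma$ and tail $z$. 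Both estimates are exact, with no $\varepsilon$, which is exactly what delivers bimonotonicity with constant $1$ rather than $1+\varepsilon$.

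To execute one step I would set $W_\gamma=\bigcap_{\nu\leq\gamma}\Z_\nu\cap\bigcap_{f\in\mathcal{F}_\gamma}\ker f$. Using the WLD codensity estimate $\codens(\bigcap_{\nu<\lambda}\Z_\nu)\leq|\lambda|\vee\bigvee_\nu\codens(\Z_\nu)$ together with the $(\sigma)$ property one checks $\codens(W_\gamma)<\kappa$; and since quotients of a WLD space are again WLD, $W_\gamma=\bigcap_{g\in G}\ker g$ for some $G\subset\X^*$ with $|G|\leq\codens(W_\gamma)<\kappa$. The selection of the next index is then performed exactly as in the extraction of a long monotone basic sequence from an $\SD$ sequence in \cite{Tal2}: the $\SD$/weak-convergence structure, with $\codens(W_\gamma)<\kappa$ and $\kappa$ regular, yields cofinally many $\alpha$ with $y_\alpha\in W_\gamma$, and I pick $\alpha_\gamma>\bigvee_{\nu<\gamma}\alpha_\nu$ among them. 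A single such membership simultaneously honours the reverse constraints $\Z_\nu$ for all $\nu\leq\gamma$ and, since $\mathcal{F}_\gamma\supset\bigcup_{\nu<\gamma}\mathcal{F}_\nu$, the forward constraints at all earlier split points. At limit stages one passes to closed spans, $E_\lambda=\overline{\bigcup_{\gamma<\lambda}E_\gamma}$, and both inequalities are inherited in the limit by density; the codensity bound guarantees $W_\gamma\neq\X$, so the recursion never stalls before $\kappa$.

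I expect the main obstacle to be the reverse-monotone direction together with its bookkeeping. Forward monotonicity and compatible extension are cheap and essentially contained in \cite{Tal2}; reverse monotonicity, by contrast, asks that \emph{arbitrary} linear combinations of the infinitely many tail vectors be $1$-normed by $E_\gamma^\bot$, which a pointwise $\Psi$-type condition cannot guarantee and which is precisely what $\mathrm{(d\dash d)}$ repairs by producing the linear subspace $\Z_\gamma$. The delicate point is then to keep the accumulated intersection $\bigcap_{\nu\leq\gamma}\Z_\nu$ of codensity $<\kappa$ (so that the recursion continues) while still having it meet the given sequence cofinally; the former is handled by the WLD and $(\sigma)$ codensity calculus and the latter by the $\SD$/weak-convergence selection of \cite{Tal2}, and reconciling the two uniformly through the limit stages is the crux. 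Beyond Theorem~\ref{thm: main_PRI} and \cite{Tal2}, this interplay is the only genuinely new ingredient.
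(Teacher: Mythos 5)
Your proposal is correct, but it takes a genuinely different route from the paper. The paper proves Theorem \ref{thm: main} as a soft consequence of Theorem \ref{thm: main_PRI}: it first constructs a bimonotone PRI $\{P_\sigma\}_{\sigma<\kappa}$ on $\X$, extracts via \cite{Tal2} a monotone subsequence weakly convergent to $0$, and then interleaves this subsequence with the PRI --- tails fall into $(\I-P_\sigma)(\X)$ because of weak convergence and $\wst\dash\dens(P_\sigma^*(\X^*))\leq|\sigma|\vee\aleph_0$, while initial segments fall into images $P_{\sigma^{(\beta)}}(\X)$ by countable tightness of the norm topology --- so that bicontractivity of the projections transfers verbatim to the extracted sequence. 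You instead inline the whole construction into one transfinite selection, applying $\mathrm{(d\dash d)}$ directly to the span $E_\gamma$ of the already chosen vectors: the closed subspace $\Z_\gamma\subset\Psi(E_\gamma^\bot)$ linearizes the reverse-monotone constraint (your observation that $\Psi$ is not linear, so an honest subspace is needed, is exactly the point of $\mathrm{(d\dash d)}$), kernels of $1$-norming families handle the monotone half, and the WLD codensity calculus keeps the accumulated constraint $W_\gamma$ proper. Your route avoids the full strength of Theorem \ref{thm: main_PRI} (in particular the back-and-forth permutation argument of its ``full case'') and is more self-contained; the paper's route buys the bimonotone PRI as a reusable structural result and the extra information that the subsequence can be taken interleaved with the blocks of a PRI. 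Both proofs run on the same engine: $\mathrm{(d\dash d)}$ plus the WLD/$(\sigma)$ codensity estimates plus the $\SD$ selection mechanism of \cite{Tal2}.

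One point you should state explicitly rather than leave inside the citation: in your selection step, weak convergence to $0$ alone never places any $y_\alpha$ \emph{exactly} inside $W_\gamma=\bigcap_{g\in G}\ker g$. What makes the step work is the property (C) consequence quoted in the paper's preliminaries: for an $\SD$ sequence, $\bigcap_{\nu<\kappa}[y_\alpha\colon\nu<\alpha<\kappa]=\{0\}$ upgrades to $\X^*=\bigcup_{\nu<\kappa}[y_\alpha\colon\nu<\alpha<\kappa]^\bot$, so every functional annihilates an entire tail of the sequence; since $|G|<\kappa$ and $\kappa$ is regular, a full tail of $\{y_\alpha\}$ then lies in $W_\gamma$. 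This is indeed the mechanism of \cite{Tal2} you invoke, so it is a clarification, not a gap.
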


\begin{proof}[Proof of Theorem \ref{thm: main}]
The proof is reduced to Theorem \ref{thm: main_PRI} below which states that $\X$ admits a bimonotone 
PRI $\{P_\sigma \}_{\sigma<\kappa}$. Since $\X$ is WLD and $\{y_\alpha\}_{\alpha<\kappa} \subset \X$
is $\SD$, by the results in \cite{Tal2} there is a subsequence $\{y_{\alpha_\beta}\}_{\beta<\kappa}$,
which is a monotone basic sequence and $y_{\alpha_\beta} \stackrel{\w}{\longrightarrow} 0$, $\beta\to \kappa$. Since $\wst\dash\dens(P_{\sigma}^* (\X^* )) \leq |\sigma|\vee \aleph_0$ and $\kappa$ is regular,
it then follows by the weak convergence of the subsequence that for each $\sigma<\kappa$ there is $\beta_\sigma < \kappa$ such that 
$y_{\alpha_\beta} \in (\I - P_\sigma )(\X)$ for $\beta_\sigma < \beta < \kappa$.
On the other hand, by using the basic properties of PRI:s and the countable tightness of the norm topology of $\X$, there is for each $\beta<\kappa$ such $\sigma^{(\beta)}<\kappa$ such that 
$y_{\alpha_\beta} \in P_{\sigma^{(\beta)}}(\X)$. These conditions ensure that we can extract 
subsequences $\{\beta_{\gamma}\}_{\gamma<\kappa},\ \{\sigma_\gamma\}_{\gamma<\kappa} \subset \kappa$ such that 
\[\{y_{\alpha_{\beta_\gamma}}\}_{\gamma \leq \mu} \subset P_{\sigma_\mu}(\X),\quad 
\{y_{\alpha_{\beta_\gamma}}\}_{\mu < \gamma <\kappa} \subset (\I- P_{\sigma_\mu})(\X),\quad \mu<\kappa.\]
The bimonotonicity of the PRI then immediately yields that $\{y_{\alpha_{\beta_\gamma}}\}_{\gamma <\kappa}$ is a bimonotone basic sequence.
\end{proof}

\section{Final remarks}
We do not know if the spaces generated by an $\SD$ sequence and with Corson's property (C) are essentially WLD in some sense. Also, we do not know, given infinite cardinals $\nu < \mu$, $\lambda < 2^\nu$, what is the least $\kappa=\kappa(\lambda,\nu,\mu)$ such that $\ell_{\nu}^\infty (\mu)$ is $(\lambda,\kappa)$-a.f. If $\mu$ is larger than $\nu$ then $\ell_{\nu}^\infty (\mu)$ is relatively 'spread out'. Thus this space can be viewed as being preconditioned towards a free structure alternative in a hypothetical combinatorial dichotomy. A similar cardinal invariant $\kappa$ involving all reflexive Banach spaces of density $\geq \kappa$ seems reasonable. In Theorem \ref{thm: spread} and Corollary \ref{cor: only} we could have managed
with a Banach lattice ideal $\mathcal{I}=\mathcal{I}_{\lambda,\kappa} \subset \ell^\infty (\mu)$, in place of 
$\ell_{\lambda}^\infty (\mu)$, such that for any subspace $\Y \subset \mathcal{I}$, $\dens(\Y)\leq \lambda$, the projection band ideal $J \subset \ell^{\infty}(\mu)$, generated by $\Y$ and
with band projection $P_J\colon \X \to J$, satisfies 
$\dens(P_J (\mathcal{I}))<\kappa$. Corollary \ref{cor: only} suggests an interesting problem regarding a possible \emph{gap} in the cardinalities of unconditional basic sequences: Is it true that each reflexive Banach space of density $\mathfrak{nc}_\mathrm{rfl}$ already admits an unconditional basic sequence of length $\omega_1$?

The class of Banach spaces with $(\sigma)$ is closed in a sense.
\begin{proposition}
Suppose that a Banach space $\X$ is $(\sigma)$-generated, i.e. there is a 
continuous linear operator $T\colon \Y \to \X$ where $\Y$ has $(\sigma)$ and $T(\Y)\subset \X$ is dense. Then $\X$ already satisfies $(\sigma)$. The property $(\sigma)$ is also inherited by closed subspaces,  
preserved in taking quotients and in infinite direct sums of spaces in the $c_0$-sense.
\end{proposition}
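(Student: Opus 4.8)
The plan is to base everything on one bookkeeping observation together with two soft facts. The observation is that a closed subspace $\Z\subseteq\X$ is coseparable, i.e. $\X\mod\Z$ is separable, \emph{if and only if} $\Z=\bigcap_{k<\omega}\ker h_k$ for a countable family $\{h_k\}_{k<\omega}\subset\Z^\bot\subseteq\X^*$. For the nontrivial direction I would use that $\X\mod\Z$ separable forces its dual $(\X\mod\Z)^*\cong\Z^\bot$ to be weak-star separable; choosing $\{\psi_k\}$ weak-star dense there and setting $h_k=\psi_k\circ q_\Z$ (with $q_\Z\colon\X\to\X\mod\Z$) gives $\bigcap_k\ker h_k=\Z$ by weak-star density. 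The two facts are: (A) if $\Z\subseteq\Z'$ are closed and $\Z$ is coseparable, so is $\Z'$, since $\X\mod\Z'$ is a quotient of $\X\mod\Z$; and (B) if $T$ has dense range and $W\subseteq\Y$ is coseparable, then $\overline{T(W)}$ is coseparable in $\X$ — taking separable $S\subseteq\Y$ with $\overline{W+S}=\Y$ one gets $\overline{T(W)+T(S)}\supseteq\overline{T(\Y)}=\X$, and $T(S)$ is separable.

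With this, the $(\sigma)$-generation statement is short. Let $\{\Z_n\}_{n<\omega}$ be coseparable in $\X$ and write $\Z_n=\bigcap_k\ker h_{n,k}$. Each pullback $T^{-1}(\ker h_{n,k})=\ker(h_{n,k}\circ T)$ is a closed subspace of $\Y$ of finite codimension, hence coseparable in $\Y$; this is the decisive point, since a single $T^{-1}(\Z_n)$ need \emph{not} be coseparable (the map $\Y\mod T^{-1}(\Z_n)\to\X\mod\Z_n$ is only a continuous injection). Applying property $(\sigma)$ of $\Y$ to the countable family $\{T^{-1}(\ker h_{n,k})\}_{n,k}$ shows that $W:=T^{-1}\big(\bigcap_n\Z_n\big)=\bigcap_{n,k}T^{-1}(\ker h_{n,k})$ is coseparable in $\Y$. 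By (B), $\overline{T(W)}$ is coseparable in $\X$, and since $\overline{T(W)}\subseteq\bigcap_n\Z_n$, fact (A) gives that $\bigcap_n\Z_n$ is coseparable. The quotient case falls out at once: a quotient map $\X\to\X\mod N$ is a dense-range operator out of the $(\sigma)$-space $\X$, so $\X\mod N$ is $(\sigma)$-generated.

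For a $c_0$-sum $\X=\big(\bigoplus_i\X_i\big)_{c_0}$ I would exploit $\X^*=\big(\bigoplus_i\X_i^*\big)_{\ell^1}$, so every functional has countable support. Writing coseparable $\Z_n=\bigcap_k\ker h_{n,k}$, the coordinate set $J=\bigcup_{n,k}\mathrm{supp}(h_{n,k})$ is countable; all $h_{n,k}$ vanish on the band $\ker P_J$ complementary to $J$, whence $\bigcap_n\Z_n=P_J^{-1}(W_J)$ for some $W_J$ in the countable sub-sum $\X_J=\big(\bigoplus_{i\in J}\X_i\big)_{c_0}$, and $\X\mod\bigcap_n\Z_n\cong\X_J\mod W_J$. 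This reduces the claim to a countable $c_0$-sum, where the images of the $\Z_n$ are again coseparable. I would finish the countable-summand case by the same dual computation as for subspaces below, the key extra leverage being that each summand is the range of a norm-one projection, which supplies the isomorphic embedding that a general subspace lacks.

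The main obstacle is the inheritance by a closed subspace $\X_0\subseteq\X$ (and, after the reduction above, the countable $c_0$-sum, which is of the same nature). Given $\Z_n\subseteq\X_0$ coseparable \emph{in} $\X_0$, I would write $\Z_n=\bigcap_k\ker g_{n,k}$ with $g_{n,k}\in\X_0^*$, extend each by Hahn–Banach to $\tilde g_{n,k}\in\X^*$, and apply $(\sigma)$ of $\X$ to the hyperplanes $\ker\tilde g_{n,k}$ to obtain a coseparable $G=\bigcap_{n,k}\ker\tilde g_{n,k}\subseteq\X$ with $G\cap\X_0=\bigcap_n\Z_n$. The trouble is that coseparability of $G$ in $\X$ does not descend automatically: the induced map $\X_0\mod(G\cap\X_0)\to\X\mod G$ is merely a continuous injection into a separable space, which need not have separable domain (the $\ell^\infty\hookrightarrow\ell^2$ phenomenon). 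To close the gap I would force $G+\X_0$ to be closed, i.e. make that map an isomorphic embedding; concretely, with $S_n\subseteq\X_0$ separable witnessing coseparability of $\Z_n$ and $S=\overline{\span}\bigcup_n S_n$, the goal $\overline{\big(\bigcap_n\Z_n\big)+S}=\X_0$ is equivalent to $\big(\bigcap_n\Z_n\big)^\bot\cap S^\bot=\{0\}$ in $\X_0^*$, and I would derive this from the $\X$-level identity $G^\bot\cap T^\bot=\{0\}$ by controlling the restriction map $\X^*\to\X_0^*$ and its behaviour on weak-star closures. Making this last transfer rigorous — reconciling the weak-star closed spans upstairs and downstairs — is the crux I expect to cost the most effort.
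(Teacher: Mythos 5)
Your first claim (the $(\sigma)$-generation statement) and the quotient case are correct, and your argument is essentially the paper's own: the paper likewise reduces $(\sigma)$ to coseparability of countable intersections of kernels, composes the functionals with $T$, applies $(\sigma)$ in $\Y$, and pushes a countable/separable witness forward through the dense range; your facts (A) and (B) are exactly this bookkeeping. One expository slip: your opening ``observation'' is stated as an equivalence, but only the direction ``coseparable $\Rightarrow$ countable intersection of kernels'' is true. The converse is false in general --- in $\ell^\infty$ one has $\bigcap_{n<\omega}\ker\delta_n=\{0\}$, which is not coseparable --- and if it were true, every Banach space would trivially satisfy $(\sigma)$, making the whole proposition vacuous. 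You only ever use the valid direction, so nothing downstream breaks, but it must be restated as a one-way implication.

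The genuine gap is the closed-subspace case, which you acknowledge you have not closed, and your $c_0$-sum case inherits the gap because you route its final step through the same machinery. That detour is unnecessary: after your (correct) reduction to a countable index set $J$, restrict each $h_{n,k}$ to the summand $\X_i$ and put $V_i=\bigcap_{n,k}\ker\bigl(h_{n,k}|_{\X_i}\bigr)$, which is coseparable in $\X_i$ by $(\sigma)$ of $\X_i$; then the subspace consisting of the $c_0$-sum of the $V_i$ over $i\in J$ together with all summands outside $J$ lies inside $\bigcap_n\Z_n$ and is coseparable in $\X$, because a countable $c_0$-sum of separable witnesses is separable. This settles the $c_0$ case with no reference to subspaces or to any dual computation.

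For closed subspaces, your plan cannot be completed as designed, because the only information it extracts from $(\sigma)$ of $\X$ is that $G=\bigcap_{n,k}\ker\tilde g_{n,k}$ is coseparable in $\X$, and the implication ``$G$ coseparable in $\X$ and $G\cap\X_0=\bigcap_n\Z_n$ $\Rightarrow$ $\bigcap_n\Z_n$ coseparable in $\X_0$'' is simply false. Concretely, in $\X=\ell^2\oplus\ell^\infty$ take $G=\{0\}\oplus\ell^\infty$ (an intersection of countably many kernels, coseparable since $\X/G\cong\ell^2$) and $\X_0=\{(Ax,x)\colon x\in\ell^\infty\}$ with $Ax=(2^{-n}x_n)_n$; then $\X_0$ is closed and isomorphic to $\ell^\infty$, the functionals $(y,x)\mapsto y_n$ restrict to a total sequence on $\X_0$ whose canonical extensions cut out $G$, yet $G\cap\X_0=\{0\}$ is not coseparable in $\X_0$. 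Of course this ambient $\X$ fails $(\sigma)$, but your argument never invokes $(\sigma)$ of $\X$ beyond the coseparability of $G$, so it cannot distinguish the two situations: the closedness of $G+\X_0$, which you rightly call the crux, is not something Hahn--Banach extensions can be expected to deliver, as they give no control whatsoever on the angle between $G$ and $\X_0$. Any correct proof of hereditariness must exploit $(\sigma)$ of $\X$ in a more global way than one application to a single countable family. For what it is worth, the paper offers no argument here either (it declares everything beyond the first claim ``easy to see''), so there is no proof in the paper against which your sketch could be repaired; as it stands, the subspace case is a genuine gap in your proposal.
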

\begin{proof}
Let us check the first claim. In verifying the condition $(\sigma)$ for $\X$ we first recall that the condition is equivalent to the coseparibility of $\bigcap_{n<\omega} \ker(f_n)$ where $(f_n)_{n<\omega} \subset \X^*$ is arbitrary. 
Observe that $f_n \circ T \in \Y^*$ and that $\bigcap_{n<\omega} \ker(f_n \circ T)$ is coseparable since $\Y$ satisfies $(\sigma)$. Let $(y_n)_{n<\omega} \subset \Y$ be such that $(y_n ) + \bigcap_{n} \ker(f_n \circ T)\subset \Y$ is dense. 
Note that $T((y_n)) + \bigcap_n \ker(T^* f_n ))$ is dense in $\X$. Moreover, since $T(\bigcap_n \ker(T^* f_n)) \subset \bigcap_n  \ker(f_n )$, we note that $T((y_n)) + \bigcap_n \ker (f_n ) \subset \X$ is dense. Thus 
$T((y_n))/ \bigcap_n \ker (f_n )$ is dense in $\X/ \bigcap_n \ker(f_n )$.
Therefore this quotient is separable and the claim follows. The rest of the statement is easy to see.
\end{proof}

In the same vein one can verify that if a CSP space $\Y$ embeds into a Banach space
$\X$ with a strong M-basis, then $\Y$ is already separable. This settles Problem 3.3 in \cite{Tal}.

\begin{proposition}\label{prop: bi_embed}
Let $\X$ satisfy $(\sigma)$ and embed isometrically into space $\Z$ which has a reverse monotone basis of length $\omega_1$ or a $1$-suppression unconditional basis of any length. Then for each separable subspace $\Y\subset \X$ the annihilator $\Y^\bot$ $1$-norms a coseparable subspace of $\X$.
Moreover, if $\X$ is additionally WLD, then for each $\Y \subset \X$, $\dens(\Y)< \dens(\X)$, the 
annihilator $\Y^\bot$ $1$-norms a subspace $\Z \subset \X$ with $\codens(\Z)= \dens(\Y)$.
\end{proposition}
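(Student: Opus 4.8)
The plan is to read the conclusion ``$\Y^\bot$ $1$-norms $W$'' as the statement $W\subset\Psi(\Y^\bot)$ in the sense of \eqref{eq: Psi}, and to produce $W$ as a coordinate tail of $\X$ inside the basis $\{e_\alpha\}$ of $\Z$, normed by Hahn--Banach functionals pulled back so as to annihilate $\Y$. The enabling observation is that the basis projections of $\Z$ are uniformly bounded in both cases (by $1$ for suppression unconditionality, and by $\le 2$ for a reverse monotone basis), so every vector of $\Z$ has countable support with respect to $\{e_\alpha\}$. Consequently a subspace $\Y\subset\Z$ with $\dens(\Y)=\nu$ has total support $S=\bigcup_{y\in D}\mathrm{supp}(y)$ over a dense $D\subset\Y$ of size $\nu$, with $|S|\le\nu$, and $\Y\subset\overline{\span(e_\alpha\colon\alpha\in S)}$ by continuity of the coordinate functionals; for separable $\Y$ the set $S$ is countable. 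I would then split into the two structural cases.

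In the $1$-suppression unconditional case the defining inequality forces the coordinate projection $R\colon\Z\to\overline{\span(e_\alpha\colon\alpha\notin S)}$ to be well defined and of norm $\le 1$, with complementary projection $R'=\I-R$ onto $\overline{\span(e_\alpha\colon\alpha\in S)}$ also of norm $\le 1$. Setting $W:=\X\cap\mathrm{range}(R)=\ker(R'\restriction\X)$, for any $z\in\S_W$ I would pick $g\in\S_{\Z^*}$ with $g(z)=1$ and put $h=R^*g$: then $\|h\|\le 1$, $h(z)=g(Rz)=1$, and $h$ kills each $e_\beta$ with $\beta\in S$, hence kills $\overline{\span(e_\alpha\colon\alpha\in S)}\supset\Y$. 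Thus $f:=h\restriction\X\in\Y^\bot$ with $f(z)=\|z\|$ and $\|f\|\le 1$, giving $z\in\Psi(\Y^\bot)$ and so $W\subset\Psi(\Y^\bot)$. Since $\X/W\cong R'(\X)\subset\overline{\span(e_\alpha\colon\alpha\in S)}$, one gets $\codens(W)\le|S|\vee\aleph_0\le\nu$, which is exactly the bound demanded in the ``moreover'' clause.

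For a reverse monotone basis of length $\omega_1$ one has $\dens(\Z)\le\aleph_1$, so $\dens(\Y)<\dens(\X)$ already forces $\Y$ separable; hence $S$ is countable and, by regularity of $\omega_1$, $\eta_0:=\sup S+1<\omega_1$ with $\Y\subset\mathrm{range}(P_{\eta_0})$. Reverse monotonicity gives $\|Q_{\eta_0}\|=1$ for the coprojection $Q_{\eta_0}=\I-P_{\eta_0}$, so the same pull-back $h=Q_{\eta_0}^*g$ yields functionals in $\Y^\bot$ norming $W:=\X\cap\mathrm{range}(Q_{\eta_0})=\ker(P_{\eta_0}\restriction\X)$, which is coseparable because $\X/W\cong P_{\eta_0}(\X)$ is separable. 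This disposes of the first (separable) assertion in both cases; and since a reverse monotone $\omega_1$-basis admits no genuinely non-separable $\Y$ with $\dens(\Y)<\dens(\X)$, the ``moreover'' part lives entirely in the unconditional case treated in the previous paragraph.

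The step I expect to require the most care, rather than a true obstacle, is justifying that for a \emph{transfinite} $1$-suppression unconditional basis the arbitrary-coordinate projection $R$ is genuinely of norm $\le 1$ on all of $\Z$: this follows by approximating $z$ by finitely supported vectors, applying the defining inequality on finite index sets, and passing to the norm limit via unconditional convergence of subseries, but it is the point most in need of verification in the non-separable regime. The remaining delicate matter is the density bookkeeping: the construction gives $\codens(W)\le\dens(\Y)$ directly, and to record it as the stated equality I would invoke the WLD codensity estimate $\codens\big(\bigcap_{\alpha}\Z_\alpha\big)\le|\lambda|\vee\bigvee_\alpha\codens(\Z_\alpha)$ and property $(\sigma)$, which keep the codensity bound stable when the tail condition is organized as a countable intersection and ensure the estimate is not degraded in the passage from $\Z$ to $\X$; the norming itself, however, is furnished by the single pulled-back functionals above.
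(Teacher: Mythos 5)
Your norming construction is sound: pulling a Hahn--Banach functional back through the tail projection ($h=R^{*}g$, resp.\ $h=Q_{\eta_0}^{*}g$) so that it annihilates the coordinates supporting $\Y$ while still norming $z$ is exactly the device the paper's sketch refers to (``the crux is using the countable coefficient functionals of the given basis of $\Z$''). The genuine gap is in how you obtain coseparability, resp.\ the codensity bound, for $W$. You assert $\X/W\cong R'(\X)$ and $\X/W\cong P_{\eta_0}(\X)$, but $\X$ is only an isometric subspace of $\Z$ and need not be invariant under the basis projections of $\Z$: for $x\in\X$ the vector $R'x$ generally lies outside $\X$, so $R'$ does not induce a projection of $\X$ onto a complement of $W$ inside $\X$. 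All one gets canonically is a \emph{bounded injection} $\X/W\to R'(\Z)$, $x+W\mapsto R'x$, and a bounded injection into a separable (or density-$\nu$) space places no bound at all on $\dens(\X/W)$; for instance $\ell^{\infty}$ injects boundedly into $\ell^{2}$ via $(x_n)\mapsto(2^{-n}x_n)$ yet is non-separable. So the codensity claims in both of your structural cases are unproved as written. A telling symptom is that your argument for the first assertion never uses the hypothesis $(\sigma)$ at all, which would make that hypothesis redundant.

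The repair is precisely what the paper does with that hypothesis: write $W=\bigcap_{\alpha\in S}\ker\bigl(e_\alpha^{*}\restriction\X\bigr)$, a countable intersection (in the separable case) of subspaces of $\X$ of codimension at most one, hence each coseparable; property $(\sigma)$ then yields that $W$ is coseparable in $\X$. For the ``moreover'' part, where $|S|\leq\dens(\Y)$ may be uncountable, one uses instead the WLD codensity estimate $\codens\bigl(\bigcap_{\alpha<\lambda}\Z_\alpha\bigr)\leq|\lambda|\vee\bigvee_{\alpha<\lambda}\codens(\Z_\alpha)$ quoted in the preliminaries. You do mention both tools in your final paragraph, but only as bookkeeping ``to record the stated equality''; in fact they are the only route to the codensity bound itself, and the quotient isomorphism they would be supplementing must be discarded. (For the equality $\codens(W)=\dens(\Y)$ you also need the lower bound, which your plan does not address: it follows because $R'$ is bounded, equals the identity on $\Y$ and vanishes on $W$, so that $\|y+w\|\geq\|R'(y+w)\|=\|y\|$ exhibits an isomorphic copy of $\Y$ inside $\X/W$.)
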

\begin{proof}[Sketch of Proof]
The argument follows similar considerations as above and in \cite{Tal2}. The crux is using the countable coefficient functionals of the given basis of $\Z$, supporting a separable subspace $\Y\subset \X \subset \Z$. Then according to $(\sigma)$ the space $\X$ contains a coseparable subspace which is annihilated by these coefficient functionals.
\end{proof}

\begin{theorem}\label{thm: equiv}
Let $\X$ be a WLD space, $\dens(\X)=\kappa$. The following are equivalent:
\begin{enumerate}
\item{For each coseparable (resp. separable) subspace $\Y\subset \X$ it holds that $\Y^{\bot\bot}\subset \X^{**}$ is coseparable (resp. separable) as well;}
\item{There is a shrinking  M-basis $\{(x_\alpha ,f_\alpha )\}_{\alpha<\kappa} $ on $\X$ such that $\overline{[x_\alpha \colon \alpha\in \Lambda ]}^{\wst}\subset \X^{**}$ is norm-separable for any 
countable subset $\Lambda \subset \kappa$.}
\item{Both $\X$ and $\X^*$ are Asplund.}
\end{enumerate}
\end{theorem}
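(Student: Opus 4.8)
The plan is to read all three conditions through the standard identification $\Y^{\bot\bot}\cong\overline{\Y}^{\,**}$ inside $\X^{**}$ together with one elementary criterion, which I would isolate first: for any Banach space $Z$ the bidual $Z^{**}$ is norm-separable if and only if $Z^*$ is simultaneously separable and Asplund, equivalently (when $Z$ is separable) if and only if both $Z$ and $Z^*$ are Asplund. This is obtained by applying twice the classical fact that a dual $W^*$ is norm-separable exactly when $W$ is separable and Asplund, together with the identity ``$W$ Asplund $\iff W^*$ has the Radon--Nikodym property (RNP)''. The point of this criterion is that it converts every separability assertion in (1) and (2) into an Asplundness assertion that feeds directly into (3). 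I would then establish the cycle (3)$\Rightarrow$(1)$\Rightarrow$(2)$\Rightarrow$(3).

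The implication (3)$\Rightarrow$(1) is the clean one. If $\X$ and $\X^*$ are Asplund, then for separable $\Y$ the space $\Y$ is Asplund (subspaces of Asplund spaces), while $\Y^{\bot\bot}\cong\Y^{**}$ lies in $\X^{**}$, which has the RNP because $\X^*$ is Asplund; since RNP is hereditary, $\Y^*$ is Asplund, and the criterion yields $\Y^{\bot\bot}$ separable. For coseparable $\Y$ I would use the dual isometry $\X^{**}/\Y^{\bot\bot}\cong(\X/\Y)^{**}$: the quotient $\X/\Y$ is separable, it is Asplund as a quotient of an Asplund space, and $(\X/\Y)^*\cong\Y^\bot$ is Asplund as a subspace of $\X^*$, so the criterion makes $(\X/\Y)^{**}$ separable, i.e.\ $\Y^{\bot\bot}$ coseparable.

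For (1)$\Rightarrow$(2) I would first extract Asplundness of $\X$: the separable clause of (1) makes $\Y^{\bot\bot}=\Y^{**}$ separable for every separable $\Y$, hence $\Y^*$ separable, so $\X$ is Asplund because Asplundness is separably determined. A WLD Asplund space admits a shrinking M-basis subordinated to a PRI (see \cite{Hajek_biortsyst}); fixing such a basis $\{(x_\alpha,f_\alpha)\}_{\alpha<\kappa}$, the required norm-separability of $\overline{[x_\alpha:\alpha\in\Lambda]}^{\wst}$ for countable $\Lambda$ is \emph{exactly} the separable clause of (1) applied to the separable block $Y_\Lambda=\overline{[x_\alpha:\alpha\in\Lambda]}$, since that weak-star closure is precisely $Y_\Lambda^{\bot\bot}$. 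This gives (2).

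The real work is (2)$\Rightarrow$(3), and this is where I expect the main obstacle. From the shrinking M-basis one obtains a shrinking PRI $\{P_\alpha\}$ on $\X$, whose biadjoints $\{P_\alpha^{**}\}$ form a PRI on $\X^{**}$, with successive jumps $(P_{\alpha+1}^{**}-P_\alpha^{**})\X^{**}$ isometric to the biduals $Y_\alpha^{**}$ of the separable blocks $Y_\alpha=(P_{\alpha+1}-P_\alpha)\X$. By the criterion, (2) makes each $Y_\alpha^{**}$ separable, so $Y_\alpha$ and $Y_\alpha^*$ are Asplund; the former again yields $\X$ Asplund, and the latter says each jump has the RNP. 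A transfinite induction along the biadjoint PRI --- using that the RNP is a three-space property at successor steps and is separably determined at limit steps, so that it is inherited by the closed union $\overline{\bigcup_{\alpha<\lambda}P_\alpha^{**}\X^{**}}$ --- then propagates the RNP from the jumps to all of $\X^{**}$, whence $\X^*$ is Asplund and (3) holds. The delicate points are the verification that the biadjoint family is genuinely a PRI with the stated jump identification (this is exactly where the shrinking hypothesis is used) and the limit step, where one must argue that a separable subspace of the closed union is effectively supported on countably many jumps and therefore carried by the RNP blocks; both are routine given the WLD/PRI machinery but constitute the technical heart of the proof.
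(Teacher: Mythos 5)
Your implications (3)$\Rightarrow$(1) and (1)$\Rightarrow$(2) are correct; in fact (3)$\Rightarrow$(1), via Stegall's duality ($W$ is Asplund iff $W^*$ has RNP) and the identification $\X^{**}/\Y^{\bot\bot}\cong(\X/\Y)^{**}$, is cleaner than the paper's route, which closes the cycle in the opposite direction, (1)$\Rightarrow$(3), (3)$\Leftrightarrow$(2), (2)$\Rightarrow$(1), and obtains the coseparable clause of (1) from (2) by splitting $\X^{**}=\overline{\,\overline{[x_\gamma\colon\gamma\in\Gamma]}^{\wst}+\Y^{\bot\bot}\,}$ along the M-basis.

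The genuine gap sits exactly where you located the ``real work,'' in (2)$\Rightarrow$(3), and it is not a routine verification: the structural facts your induction rests on are false. First, the jumps $(P_{\alpha+1}-P_\alpha)\X$ of a PRI are separable only when $\kappa=\aleph_1$; for $\kappa>\aleph_1$ they have density $|\alpha|\vee\aleph_0$, so condition (2) (which speaks of countable $\Lambda$) does not apply to them, and you have set up no induction on density to compensate. Second, the biadjoints of a shrinking PRI need not form a PRI on $\X^{**}$: the coordinate PRI on $c_0(\omega_1)$ is shrinking, yet its biadjoints are the coordinate restrictions on $\ell^\infty(\omega_1)$, and for $F=\1$ one has $\|P_\lambda^{**}F-P_\alpha^{**}F\|=1$ for all $\alpha<\lambda$, so norm-continuity at limit ordinals fails; under hypothesis (2) alone you cannot restore it without essentially already knowing (3). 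Third, and most seriously, even granting a biadjoint PRI, your limit step is invalid: RNP does not pass to closures of increasing unions. Indeed $c_0$ is the closure of the increasing union of its finite-dimensional coordinate blocks, each with RNP and with compatible contractive projections, yet $c_0$ fails RNP; ``separable determination'' does not rescue this, because at a limit of countable cofinality a separable subspace of $\overline{\bigcup_{\alpha<\lambda}P_\alpha^{**}\X^{**}}$ need not be contained in any $P_\alpha^{**}\X^{**}$. (The same phenomenon undercuts your side claim that Asplundness of the jumps ``again yields $\X$ Asplund'': $\ell^1(\omega_1)$ has a PRI with one-dimensional jumps but is not Asplund; here you are saved only because a WLD space with a shrinking M-basis is automatically Asplund by the cited characterization.) The paper's argument for this implication avoids the bidual PRI altogether and has no limit steps: given a separable $\Z\subset\X^*$, shrinkingness gives $\X^*=\overline{[f_\alpha\colon\alpha<\kappa]}$, so a countable dense subset of $\Z$ places $\Z$ inside $\overline{[f_\gamma\colon\gamma\in\Gamma]}$ for a single countable set $\Gamma$, and then the norm-separability hypothesis of (2), applied to that $\Gamma$, shows $\Z^*$ is separable, i.e.\ $\X^*$ is Asplund. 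That separable reduction inside the dual, rather than RNP propagation along transfinite projections, is the idea your proposal is missing.
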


Note that the above equivalent conditions can be viewed as a kind of strong Asplund condition. If $\X$ is coseparable in its bidual then it satisfies the above equivalent conditions.
Other easy examples include $\ell^p (\kappa,\X)$, $1<p<\infty$, with $\X$ coseparable in its bidual.
\begin{proof}
Assume that condition (1) holds. Note that if $\Y \subset \X$ is a separable subspace, then according to the assumption we have that $\Y^{\bot\bot}\subset \X^{**}$ is separable. This is isometrically the dual of $\Y^*$ which in turn must be separable. Thus $\X$ is an Asplund space.

Consequently, $\X$ admits a shrinking M-basis 
$\{(x_\alpha ,f_\alpha )\}_{\alpha<\kappa}$, since this condition is equivalent to being a WLD Asplund space, see \cite[Thm. 7.12.]{Z}.
 
Suppose that $\Z \subset \X^*$ is a separable subspace. Then by the shrinking property of the M-basis there
is a countable subset $\Gamma \subset \kappa$ such that 
$\Z\subset [f_\gamma \colon \gamma\in \Gamma]$. Clearly $\{x_\gamma \}_{\gamma\in \Gamma}$
separates $[f_\gamma \colon \gamma\in \Gamma]$ so that 
$\Z^* \subset \overline{ [x_\gamma \colon \gamma\in \Gamma]}^{\wst} \subset \X^{**}$ is norm-separable. 
Thus $\X^*$ is Asplund. Hence (1) implies (3). 

Now it is easy to see the equivalence of conditions (2) and (3). The fact that (2) implies (1) is seen as follows:
Suppose that $\Y \subset \X$ is a coseparable subspace. Then there is a countable set $\Gamma\subset \kappa$ such that $[x_\gamma \colon \gamma\in \kappa\setminus \Gamma] \subset \Y $. 
Clearly $\X^{**} = \overline{\overline{ [x_\gamma \colon \gamma\in \Gamma]}^{\wst} + \Y^{\bot\bot}}$
where 
\[\left(\overline{ [f_\gamma \colon \gamma\in \Gamma]}^{\wst} \right)^* =\overline{ [x_\gamma \colon \gamma\in \Gamma]}^{\wst} \subset \X^{**} \] 
is norm-separable.
This means that the bidual $\Y^{\bot\bot}\subset \X^{**}$ is coseparable. The preservation of separability in passing to 
the bidual is seen in the same way, so that we have that (2) implies (1). 
\end{proof}

Suppose that $\X$ satisfies the equivalent conditions of Theorem \ref{thm: equiv}. 
Although it is easy to see that $\X^*$ is a Plichko space, we do not know whether it must be WLD or if the $\wst$-separable and norm-separable subspaces of $\X^{**}$ coincide.



\subsection*{Acknowledgements}

I am indebted to Professors A. Avil\'es, W. Kubi\'s and A. Plichko for their useful comments, while any mistakes are my own.

This work has been financially supported by research grants from the V\"ais\"al\"a foundation, the Finnish Cultural Foundation and the Academy of Finland Project \# 268009.

\end{document}